\newtheorem{theorem}{Theorem}[section]
\newenvironment{df}{\medskip\par \noindent{\textbf{Definition:}\ }\begin{itshape}}{\end{itshape}\par\medskip}
\newtheorem{lemma}[theorem]{Lemma}
\def\Box{\raisebox{3pt}{\framebox{\hbox to 3pt{\vbox to 3pt{}}}}}
\newenvironment{proof}{\medskip \noindent{\sc Proof:}}{\quad$\Box$\par\medskip} 
\newenvironment{proof2}[1]{\medskip \noindent{\sc Proof of #1:}}{\quad$\Box$\par\medskip} 
\newenvironment{remark}{\medskip\par \noindent{\textbf{Remark:}\ }}{\par \medskip}
\newcounter{question}
\newenvironment{question}{\refstepcounter{question}\medskip\par \noindent{\textbf{Question \thequestion:}\ }}{\par \medskip}
\newtheorem{corollary}[theorem]{Corollary}
\newtheorem{claim}[theorem]{Claim}
\newcommand{\abs}[1]{\left\lvert{#1}\right\rvert}
\newcommand{\floor}[1]{\left\lfloor{#1}\right\rfloor}
\newcommand\sq{\mathbin{\text{\scalebox{.84}{$\square$}}}}
\DeclareMathOperator{\N}{N(\mathcal{B})}
\DeclareMathOperator{\opt}{opt}
\DeclareMathOperator{\diam}{diam}
\DeclareMathOperator{\cov}{Cov}
\DeclareMathOperator{\dist}{d}
\DeclareMathOperator{\reach}{Reach}
\DeclareMathOperator{\exc}{Exc}
\DeclareMathOperator{\V}{V}
\DeclareMathOperator{\TE}{TE}
\DeclareMathOperator{\coop}{Coop}
\DeclareMathOperator{\CE}{CE}
\DeclareMathOperator{\UE}{UE}
\DeclareMathOperator{\DC}{DC}
\DeclareMathOperator{\ef}{ef}
\title{Optimal pebbling number of the square grid }
\author[1,2]{Ervin Gy\H{o}ri\thanks{gyori.ervin@renyi.mta.hu}}
\author[3,4]{Gyula Y. Katona\thanks{kiskat@cs.bme.hu}}
\author[3]{L\'aszl\'o F. Papp\thanks{lazsa@cs.bme.hu}}
\affil[1]{Alfr\'ed R\'enyi Institute of Mathematics, Budapest, Hungary}
\affil[2]{Department of Mathematics, Central European University, Budapest, Hungary}
\affil[3]{Department of Computer Science and
Information Theory, Budapest University of Technology and Economics, Hungary}
\affil[4]{MTA-ELTE Numerical Analysis and Large Networks Research Group, Hungary}
\begin{document}

\maketitle

%General Notes: Pick which spelling to use neighbor or neighbour.   I tried loading amsthm but it conflicts with something.  It would be nice to have the \qedhere command.
\begin{abstract} A pebbling move on a graph removes two pebbles from a vertex and adds one pebble
to an adjacent vertex. A vertex is reachable from a pebble distribution if it is possible
to move a pebble to that vertex using pebbling moves. The optimal pebbling number
$\pi_{\opt}$  is the smallest number m needed to guarantee a pebble distribution of m pebbles
from which any vertex is reachable.
The optimal pebbling number of the square grid graph $P_n\square P_m$ was investigated in several papers \cite{ladder,yerger,gridnote}. In this paper, we present a new method using some recent ideas to give a lower bound on $\pi_{\opt}$.
We apply this technique to prove that $\pi_{\opt}(P_n\square P_m)\geq \frac{2}{13}nm$. Our method also gives a new proof for $\pi_{\opt}(P_n)=\pi_{\opt}(C_n)=\left\lceil\frac{2n}{3}\right\rceil$.
\end{abstract}

\section{Introduction}
Graph pebbling is a game on graphs. It was suggested by Saks and Lagarias to solve a number theoretic problem, which was done by Chung \cite{chung}. The main framework is the following: A distribution of pebbles is placed on the vertices of a simple graph. A pebbling move removes two pebbles from a vertex and places one pebble on an adjacent vertex.   The goal is to reach any specified vertex by a sequence of pebbling moves.  This may be viewed as a transportation problem on a graph where the cost of a move is one pebble.  We begin with some notation needed to state our results.

Let $G$ be a simple graph.  We denote the vertex and edge set of $G$ by $V(G)$ and $E(G)$, respectively.     A \emph{pebble distribution} $P$ is a function from $V(G)$ to the nonnegative integers.  We say that $G$ has $P(v)$ pebbles placed at the vertex $v$ under the distribution $P$.  We say that a vertex $v$ is occupied if $P(v)>0$ and unoccupied otherwise.   The \emph{size} of a pebble distribution $P$, denoted $\abs{P}$, is the total number of pebbles placed on the vertices of $G$.

Let $u$ be a vertex with at least two pebbles under $P$, and let $v$ be a neighbor of $u$.  A \emph{pebbling move} from $u$ to $v$ consists of removing two pebbles from $u$ and adding one pebble to $v$.  That is, a pebbling move yields a new pebble distribution $P'$ with $P'(u) = P(u)-2$ and $P'(v) = P(v) + 1$. A sequence of pebbling moves is \emph{executable} under a pebble distribution if we can apply its moves one after the another so that the number of pebbles is nonnegative after each move on any vertex. We call such a sequence a \emph{pebbling sequence}. 
 We say that a vertex $v$ is \emph{$k$-reachable} under the distribution $P$ if we can obtain, after a pebbling sequence, a distribution with at least $k$ pebbles on $v$.  If $k=1$ we say simply that $v$ is reachable under $P$.   More generally, a set of vertices $S$ is $k$-reachable under the distribution $P$ if, after a pebbling sequence, we can obtain a distribution with at least a total of $k$ pebbles on the vertices in $S$. 

A pebble distribution $P$ on $G$ is \emph{solvable} if all vertices of $G$ are reachable under $P$.
A pebble distribution on $G$ is \emph{optimal} if it is solvable and its size is minimal among all of the solvable distributions of $G$.  Note that optimal distributions are usually not unique. 
 
The \emph{optimal pebbling number} of $G$, denoted by $\pi_{\opt}(G)$, is the size of an optimal pebble distribution.   In general, the decision problem for this graph parameter is NP-complete \cite{NPhard}. 

We denote with $P_n$ and $C_n$ the path and cycle on $n$ vertices, respectively.   The \emph{Cartesian product} $G\sq H$ of graphs $G$ and $H$ is defined in the following way: $V(G\sq H)=V(G)\times V(H)$ and $\{(g_1,h_1),(g_2,h_2)\}\in E(G\sq H)$ if and only if $\{g_1,g_2\}\in E(G)$ and $h_1=h_2$ or $\{h_1,h_2\}\in E(H)$ and $g_1=g_2$.

Let $u$ and $v$ be vertices of graph $G$. The distance between $v$ and $u$, namely the number of edges contained in the shortest path between $u$ and $v$, is denoted by $d(v,u)$. The distance $k$ neighborhood of $v$ contains the vertices whose distance from $v$ is exactly $k$. We denote this set with $N_k(v)$. 

The optimal pebbling number is known for several graphs including paths, cycles \cite{ladder,path1,path2}, caterpillars \cite{caterpillar} and $m$-ary trees \cite{m-ary}.  
The optimal pebbling number of grids has also been investigated.   Exact values were proved for $P_n \sq P_2$ \cite{ladder} and $P_n\sq P_3$ \cite{yerger}. The question for bigger grids is still open. The best known upper bound  for the square grid can be found in \cite{gridnote}. Diagonal induced subgraphs of the square grid was studied in \cite{stairs}. %Some results of that paper support \cite[Conjecture 1]{gridnote}, which can be reformulated as $\pi_{\opt}(P_n\square P_m)=\frac{4}{14}nm+\theta(n+m)$.

Instead of the square grid on the plane it is easier to work with the square grid on the torus. As the plane grid is a subgraph of this, any lower bound on the torus grid will also give a lower bound on the plane grid as well. It is well known that the torus grid is a \emph{vertex transitive} graph, i.e.  given any two vertices $v_1$ and $v_2$ of $G$, there is some automorphism
$f:V(G)\rightarrow V(G)$ 
such that
$f(v_{1})=v_{2}$. Some of our statements will be stated for all vertex transitive graphs.

In this paper we present a new method giving a lower bound on the optimal pebbling number. We obtain $\frac{2}{13}V(G)$ as a lower bound for the optimal pebbling number of the square grid, which is better than the previously known bounds.

In Section \ref{sec2} we show that the concept of excess --- introduced in \cite{yerger} ---  can be used to improve the fractional lower bound on the optimal pebbling number. The higher the total excess, the better the obtained bound on the optimal pebbling number is. The problem is that this method is not standalone, because excess can be zero and zero excess does not give us any improvement. Therefore the main objective of the rest of the paper is to give a lower bound on the excess using some other pebbling tools.

In Section \ref{sec3} we study the concept of cooperation. Cooperation is the phenomenon which makes pebbling hard. We show there, that if cooperation can be bounded from above, then we can state a lower bound on the optimal pebbling number. We invent the tool called cooperation excess, which is a mixture of cooperation and excess. In this section we state and prove several small claims which will be required later to prove Lemma \ref{vertex-excess}. This lemma is the essence of our work. It shows that if  the total excess is small, then there is not much cooperation and if cooperation is huge, then the total excess is also large. Therefore in each case one of our two lower bounds works well.

Unfortunately, the proof of Lemma \ref{vertex-excess} is quite complicated. The third part of Section \ref{sec3} and the whole Section \ref{sec4} contain the parts of this proof.
In Section \ref{sec5} we show a general method which can be used to give a lower bound on the optimal pebbling number. This method relies on Lemma \ref{vertex-excess}. Using this method we show that $\pi_{\opt}(P_n\square P_m)\geq \frac{2}{13}nm$. We also present a new proof for  $\pi_{\opt}(P_n)=\pi_{\opt}(C_n)=\left\lceil\frac{2n}{3}\right\rceil$.

\section{Improving the fractional lower bound}
\label{sec2}

The optimal pebbling number problem can be formulated as the following integer programming problem \cite{fractional}, where $v_1, v_2, \ldots, v_n$ are the vertices of the given graph:

\begin{equation*}
P(v_i)+\sum_{x\in N(v_i)}(p_i(x,v_i)-2p_i(v_i,x))\geq 1 \ \forall i\in \{1,2,\ldots n\} 
\end{equation*}
\begin{equation*}
P(v_j)+\sum_{x\in N(v_j)}(p_i(x,v_j)-2p_i(v_j,x))\geq 0 \ \forall i,j\in \{1,2,\ldots n\} 
\end{equation*}
\begin{equation*}
P(v_i)\geq 0 \text{ integer} \ \forall i\in \{1,2,\ldots n\} 
\end{equation*}
\begin{equation*}
p_i(v_j,v_k)\geq 0 \text{ integer} \ \forall i,j,k\in \{1,2,\ldots n\} 
\end{equation*}
\begin{equation*}
\min \sum_{v\in V(G)}P(v)
\end{equation*}
 %, where the objective function need to be minimalised. 

Its fractional relaxation can be solved efficiently, and its solution
is called the \emph{fractional optimal pebbling number}, which gives a
lower bound on the optimal pebbling number. Originally it was defined
in a bit different way, but this is an equivalent definition. You can
find the details of fractional pebbling in \cite{fractional}.

%Each vertex is reachable under a solvable distribution. This is why we
%require one at the right side of the first inequality. \textbf{kell ez?}

%Moreover, 
Notice that
some vertices must be $2$-reachable in a solvable
distribution if there is an unoccupied vertex. Optimal distributions
usually contain many unoccupied and several $2$-reachable
vertices. However, in some sense, $2$, $3$, or more reachability
wastes the effect of pebbles. Also $3$-reachability induces larger waste
than $2$-reachability. In order to measure this waste we use the notion called
\emph{excess}, which was introduced in \cite{yerger}.

\begin{df}
Let $\reach(P,v)$ be the greatest integer $k$ such that $v$ is $k$-reachable under $P$. 
The \emph{excess} of $v$ under $P$ is $\reach(P,v)-1$ if $v$ is reachable and zero otherwise. It is denoted by $\exc(P,v)$.
\end{df}

We are interested in the total amount of waste, therefore we define
the notation of \emph{total excess} of $P$, which is
$\TE(P)=\sum_{v\in \V (G)}\exc(P,v)$.

\begin{df}
An \emph{effect} of a pebble placed at $v$ is the following: $\ef(v)=\sum_{i=0}^{\diam(G)}\left(\frac{1}{2}\right)^i|N_i(v)|$.
\end{df}

Herscovici \textit{et al.} proved that the fractional optimal pebbling number of a vertex-transitive graph is $|V(G)|/\ef(v)$, therefore it is a lower bound on the optimal pebbling number. The corollary of the next theorem improves this bound.

\begin{theorem}
If $P$ is a solvable distribution on 
$G$, then
$$\sum_{v\in V(G)}\ef(v)P(v)\geq |V(G)|+\TE(P).$$
\label{excess_dist}
\end{theorem}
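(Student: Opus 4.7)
The plan is to use the standard distance-weighted potential function, one for each target vertex, and then sum everything up by swapping the order of summation. The definition of $\ef(v)$ is precisely $\sum_{u\in V(G)} (1/2)^{d(u,v)}$, so the right-hand side $\sum_v \ef(v)P(v)$ can be rewritten as $\sum_u W_u(P)$, where $W_u(P) := \sum_v (1/2)^{d(u,v)} P(v)$ is the total weight of $P$ as seen from the target $u$. The goal then becomes showing $W_u(P) \geq 1 + \exc(P,u)$ for every vertex $u$.

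First, I would prove the standard fact that for each fixed $u$, the potential $W_u$ is non-increasing under any pebbling move. Consider a move sending two pebbles from $a$ to an adjacent vertex $b$. The change in $W_u$ equals $(1/2)^{d(u,b)} - 2(1/2)^{d(u,a)}$. Since $|d(u,b)-d(u,a)| \leq 1$, the three cases $d(u,b) \in \{d(u,a)-1, d(u,a), d(u,a)+1\}$ give changes $0$, $-(1/2)^{d(u,a)}$, and $-(3/2)(1/2)^{d(u,a)+1}$, respectively, all $\leq 0$. Hence $W_u$ cannot increase along any pebbling sequence.

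Next, since $P$ is solvable and $\reach(P,u) = 1 + \exc(P,u)$ pebbles can be assembled at $u$ via some executable pebbling sequence, the resulting distribution $P^\star$ satisfies $P^\star(u) \geq 1 + \exc(P,u)$. In particular,
\begin{equation*}
W_u(P) \;\geq\; W_u(P^\star) \;\geq\; (1/2)^{d(u,u)}\,P^\star(u) \;=\; P^\star(u) \;\geq\; 1 + \exc(P,u).
\end{equation*}

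Finally, summing this inequality over all $u \in V(G)$ and switching the order of summation on the left,
\begin{equation*}
\sum_{u\in V(G)} \big(1 + \exc(P,u)\big) \;\leq\; \sum_{u\in V(G)} W_u(P) \;=\; \sum_{v\in V(G)} P(v) \sum_{u\in V(G)} (1/2)^{d(u,v)} \;=\; \sum_{v\in V(G)} \ef(v)P(v),
\end{equation*}
and the left-hand side is exactly $|V(G)| + \TE(P)$, which gives the claim. There is no real obstacle here; the only thing to verify carefully is the case analysis showing $W_u$ is non-increasing under a pebbling move, and to be precise about the fact that the $\reach(P,u)$ pebbles assembled at $u$ need not be the only pebbles present in the final distribution (but the remaining ones only contribute non-negatively to $W_u(P^\star)$, so the inequality is preserved).
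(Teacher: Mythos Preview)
Your proof is correct and follows essentially the same route as the paper: both establish, for each target $u$, the inequality $\sum_v (1/2)^{d(u,v)}P(v)\ge \reach(P,u)=1+\exc(P,u)$ and then sum over $u$, swapping the order of summation to recover $\ef(v)$. The only difference is that you spell out the monotonicity of the weight $W_u$ under a pebbling move (where the paper simply declares the per-vertex inequality ``clear''); note a harmless slip in your third case, where the change should read $-(3/2)(1/2)^{d(u,a)}$ rather than $-(3/2)(1/2)^{d(u,a)+1}$, but the sign is still non-positive so nothing is affected.
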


\begin{proof}
It is clear that if a vertex $u$ is $k$-reachable under $P$, then it is mandatory that \\$\sum_{v\in V(G)}\left(\frac{1}{2}\right)^{\dist(v,u)}P(v)\geq k$. Summing these inequalities for all the vertices, we have that
$$\sum_{u\in V(G)}\sum_{v\in V(G)}\left(\frac{1}{2}\right)^{\dist(v,u)}P(v)\geq\sum_{u\in V(G)}\reach(P,u).$$
Exchange the summations on the left side and use the fact that $P$ is solvable on the right side, to obtain that
$$\sum_{v\in V(G)}\sum_{u\in V(G)}\left(\frac{1}{2}\right)^{\dist(v,u)}P(v)\geq\sum_{u\in V(G)}(1+\exc(P,u)).$$
Group the elements of the second sum according to the distance $i$ neighborhoods, to acquire that
$$\sum_{v\in V(G)}\sum_{i=0}^{\diam(G)}\left(\frac{1}{2}\right)^i|N_i(v)|P(v)\geq|V(G)|+\TE(P).$$
\end{proof}

\begin{corollary}
If $P$ is a solvable distribution on a vertex-transitive graph $G$, then
$$|P|\geq \frac{|V(G)|+\TE(P)}{\ef(v)}.$$
\end{corollary}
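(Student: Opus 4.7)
The plan is to derive the corollary as a direct consequence of Theorem \ref{excess_dist} by exploiting vertex transitivity to simplify the left-hand side. First I would observe that the function $\ef$ depends only on the multiset of distance-neighborhood sizes $|N_i(\cdot)|$ at a vertex. If $G$ is vertex-transitive, then for any two vertices $u,v$ there is an automorphism $f$ with $f(v)=u$; since $f$ preserves distances, it induces bijections $N_i(v)\to N_i(u)$ for every $i$. Hence $|N_i(v)|=|N_i(u)|$ for all $i$, and therefore $\ef(v)=\ef(u)$ for every pair of vertices. Let me denote this common value simply by $\ef(v)$.

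Next I would apply Theorem \ref{excess_dist} to the solvable distribution $P$. Because $\ef(v)$ is a constant (independent of which vertex we evaluate at), it factors out of the sum:
\begin{equation*}
\sum_{v\in V(G)}\ef(v)P(v) \;=\; \ef(v)\sum_{v\in V(G)}P(v) \;=\; \ef(v)\,|P|.
\end{equation*}
Combining this with the conclusion of Theorem \ref{excess_dist} yields $\ef(v)\,|P|\geq |V(G)|+\TE(P)$. Since $\ef(v)\geq 1>0$ (the $i=0$ term already contributes $1$), we may divide by $\ef(v)$ to obtain the desired bound
\begin{equation*}
|P|\geq \frac{|V(G)|+\TE(P)}{\ef(v)}.
\end{equation*}

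There is no real obstacle here; the only thing that needs verification is that $\ef$ is genuinely an automorphism invariant, which follows immediately from the fact that graph automorphisms preserve the distance function and therefore permute the distance-$i$ neighborhoods. The content of the corollary lies entirely in Theorem \ref{excess_dist}; vertex transitivity just turns a weighted inequality into an inequality about $|P|$.
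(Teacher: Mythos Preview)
Your argument is correct and is exactly the derivation the paper intends: the corollary is stated without proof immediately after Theorem~\ref{excess_dist}, relying on the observation that vertex transitivity makes $\ef(\cdot)$ constant so the left-hand side of the theorem becomes $\ef(v)\,|P|$. There is nothing to add.
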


Naturally, this bound is useless without a proper estimate of total excess. To
say something useful about it we look at the optimal pebbling
problem from a different angle.

%\newpage

\section{%Lower bound on the total excess of an optimal distribution
Cooperation between distributions}
\label{sec3}
%In this section we give a lower bound on the total cooperation excess of an optimal distribution in terms of the maximum degree of the graph. This requires many definitions and considerable effort. 
In this section we talk about cooperation, which makes pebbling hard.
% Using the concept of cooperation we divide excess to two types. We state many small claims about these objects.

%First we need to get to the definition of the crucial notion called cooperation.

\subsection{Pebbling cooperation}
\begin{df}
Let $P$ and $Q$ be  pebble distributions on graph $G$. Now $P+Q$ is the unique pebble distribution on $G$ which satisfies $(P+Q)(v)=P(v)+Q(v)$. $P$ and $Q$ are \emph{disjoint} when no vertex has pebbles under both distributions. 

\end{df}

\begin{df}
The \emph{coverage} of a distribution $P$ is the set of vertices which are reachable under $P$. We denote the size of this set with $\cov(P)$.
\end{df}
%A KOVETKEZO EBBEN A FORMAJABAN HATALMAS BAROMSAG
%We note that if $P$ is optimal, then the optimal pebbling number is $\frac{1}{\cov(P)}$. Therefore an upper bound on the coverage of all solvable distributions of $G$ gives a lower bound on $\pi_{opt}(G)$. 
A natural idea to find small solvable distributions is finding a distribution with small size and huge coverage and make it solvable by placing some more pebbles.

In the rest of the section we assume that we add disjoint distributions $P$ and $Q$ together. We would like to establish an upper bound using $\cov(P)+\cov(Q)$ on $\cov(P+Q)$ . Similarly, we are interested in some relation between  $\TE(P+Q)$ and $\TE(P)+\TE(Q)$.

\begin{df} A \emph{cooperation vertex} is neither reachable under $P$ nor $Q$, but it is reachable under $P+Q$. We denote the number of such vertices with $\coop(P,Q)$. 
A \emph{double covered} vertex is reachable under both $P$ and $Q$, we denote the size of their set with $\DC(P,Q)$.
\end{df}

The following claim is a trivial consequence of the definitions.

\begin{claim} %\textcolor{red}{If $P$ and $Q$ be disjoint pebble distributions on the same graph, then}
$\cov(P+Q)= \cov (P)+ \cov(Q)+\coop(P,Q)-\DC(P,Q)$.
\label{szetbontas}
\end{claim}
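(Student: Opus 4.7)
The plan is to prove the claim by partitioning the coverage of $P+Q$ into three disjoint pieces according to reachability under $P$ and $Q$ separately, and then applying inclusion–exclusion to the union of the individual coverages. Since $P$ and $Q$ are disjoint distributions placed on the same graph, $P+Q$ dominates each of them pointwise, so the only nontrivial observation needed is a monotonicity statement: if a vertex is reachable under $P$ alone, it remains reachable when we add extra pebbles from $Q$.

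First I would verify this monotonicity. Let $v$ be reachable under $P$ via some pebbling sequence $\sigma$. Every move in $\sigma$ requires a certain number of pebbles on a given vertex, and under $P+Q$ every vertex starts with at least as many pebbles as under $P$. A straightforward induction on the length of $\sigma$ shows that after each prefix of $\sigma$ the resulting distribution under $P+Q$ also dominates the corresponding one under $P$, so $\sigma$ is executable under $P+Q$ and delivers a pebble to $v$. Hence $\cov(P) \cup \cov(Q) \subseteq \cov(P+Q)$ (as sets of reachable vertices), and this inclusion will be used as a set-theoretic fact in what follows.

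Next I would split the set of reachable vertices of $P+Q$ into the part already covered by $P$ or $Q$ and the part that is new. By definition, the ``new'' part is exactly the set of cooperation vertices, so
$$|\cov(P+Q)| \;=\; |\cov(P) \cup \cov(Q)| \;+\; \coop(P,Q).$$
Applying inclusion–exclusion to the first term and using the definition of double covered vertices gives
$$|\cov(P) \cup \cov(Q)| \;=\; \cov(P) + \cov(Q) - \DC(P,Q),$$
and substituting yields the claimed identity.

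There is essentially no obstacle; the only nontrivial ingredient is the monotonicity lemma from the first paragraph, and even that is a one-line induction once one writes it out. The rest is bookkeeping and a single application of inclusion–exclusion on the finite sets of reachable vertices.
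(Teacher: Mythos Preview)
Your proof is correct and is exactly the argument the paper has in mind: the paper does not give a proof at all, simply stating that the claim ``is a trivial consequence of the definitions,'' and your monotonicity-plus-inclusion--exclusion write-up is the natural way to unpack that. The only cosmetic point is that in the paper $\cov(P)$ denotes the \emph{size} of the coverage set rather than the set itself, so strictly speaking you are overloading the notation when you write $\cov(P)\cup\cov(Q)$, but this is harmless.
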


\begin{df}
We say that a distribution $U$ is a \emph{unit}, if only one vertex has pebbles under $U$.
\end{df}
Units are the building blocks of pebble distributions in the following sense: Any distribution $P$ can be written as $\sum_{u|P(u)>0}P_u$, where $P_u$ is a unit having $P(u)$ pebbles at $u$. Units have two main advantages over other distributions. Their coverage and total excess can be easily calculated:

\begin{claim}
Let $U$ be a unit distribution which places pebbles at vertex $u$. Then we have that
\begin{equation*}
\cov(U)=\sum_{i=0}^{\left\lfloor\log_2(P(u))\right\rfloor}|N_i(u)|,
\end{equation*}
%\end{claim} 

%\begin{claim}
\begin{equation*}
\TE(U)=\sum_{i=0}^{\left\lfloor\log_2(P(u))\right\rfloor}|N_i(u)|\left(\left\lfloor \frac{P(u)}{2^i}\right\rfloor-1\right).
\end{equation*}
\label{unit_prop}
\end{claim}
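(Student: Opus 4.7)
The plan is to prove both formulas by first establishing the single key fact underlying them: if $U$ places $m=P(u)$ pebbles on $u$ and $v$ is any vertex at distance $i$ from $u$, then $\reach(U,v)=\lfloor m/2^i\rfloor$ (interpreted as $0$ when $m<2^i$). Once this is in hand, the coverage formula is immediate, since a vertex $v$ is reachable iff $\reach(U,v)\geq 1$, i.e.\ iff $2^i\leq m$, i.e.\ iff $i\leq\lfloor\log_2 m\rfloor$; summing $|N_i(u)|$ over these $i$ gives the first equation. For the total excess, each reachable $v\in N_i(u)$ contributes $\reach(U,v)-1=\lfloor m/2^i\rfloor-1$, and grouping the summation by $i$ yields the second equation.

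To prove $\reach(U,v)=\lfloor m/2^i\rfloor$, I would establish the inequalities in both directions. For the lower bound (achievability), I would take a shortest $u$--$v$ path $u=u_0,u_1,\dots,u_i=v$ and greedily push along it: at step $j$ we perform $\lfloor m_j/2\rfloor$ moves from $u_j$ to $u_{j+1}$, where $m_j$ is the current load at $u_j$. A straightforward induction on $j$ gives $m_j=\lfloor m/2^j\rfloor$, so after $i$ steps $v$ carries $\lfloor m/2^i\rfloor$ pebbles.

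For the upper bound I would use the standard weight-function argument relative to a fixed target $v$: define $w(D)=\sum_{x\in V(G)}D(x)(1/2)^{d(x,v)}$ for any distribution $D$. A single pebbling move from $x$ to a neighbor $y$ changes $w$ by $(1/2)^{d(y,v)}-2(1/2)^{d(x,v)}$, which is $\leq 0$ by the triangle inequality $d(y,v)\geq d(x,v)-1$. Hence $w$ is non-increasing along any pebbling sequence. Starting from $U$ we have $w(U)=m(1/2)^i$, so any reachable distribution $D$ satisfies $D(v)\leq w(D)\leq m/2^i$; since $D(v)$ is an integer, $D(v)\leq\lfloor m/2^i\rfloor$.

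I do not expect any real obstacle here: both the greedy shortest-path push and the weight-function monotonicity are entirely standard in the pebbling literature, and the only thing to be slightly careful about is the floor arithmetic in the induction step (using $\lfloor\lfloor a/2\rfloor/2\rfloor=\lfloor a/4\rfloor$, and more generally $\lfloor\lfloor a/2^{j}\rfloor/2\rfloor=\lfloor a/2^{j+1}\rfloor$). After the reach formula is proved, deriving the two displayed equalities is just bookkeeping over the distance shells $N_i(u)$ for $0\leq i\leq\lfloor\log_2 m\rfloor$.
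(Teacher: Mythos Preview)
Your proof is correct. The paper actually states this claim without proof, treating it as an elementary observation (``Their coverage and total excess can be easily calculated''), so there is no argument to compare against; your write-up simply fills in the routine details the authors chose to omit, via the standard reach identity $\reach(U,v)=\lfloor m/2^{d(u,v)}\rfloor$ proved by greedy pushing along a shortest path together with the usual weight-function upper bound.
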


\subsection{Combining cooperation and excess} 

We would like to distinguish the sources of excess. Does it come from $P$ or $Q$ or does it arise from the ``cooperation of $P$ and $Q$''?

\begin{df}
The \emph{unit excess} of $P$, denoted by $\UE(P)$,  is $\sum_{u|P(u)>0}(TE(P_u))$, where $P_u$ is a unit on $u$ containing exactly $P(u)$ pebbles and all of them are placed at $u$.
\end{df} 

\begin{df}
The \emph{cooperation excess} of a vertex $v$ is $\exc(P+Q,v)-(\exc(P,v)+\exc(Q,v))$. If it is positive, then we say that $v$ has cooperation excess.

Similarly, the \emph{cooperation excess} between $P$ and $Q$ is the total excess of $P+Q$ minus the total excesses of $P$ and $Q$. Denote this with $CE(P,Q)$.
\end{df}

We have mentioned previously, that we can split any pebbling
distribution into disjoint unit distributions. If we get $t$ unit
distributions, then the application of Claim \ref{szetbontas} and the
definition of cooperation excess gives the following results.

\begin{claim}\label{excess_bontas}
Let $P$ be a pebble distribution on $G$ and let $\mathcal{D}$ be a disjoint decomposition of $P$ to unit distributions. Denote the elements of $\mathcal{D}$ with $U_1, U_2, \dots, U_t$. Now
\begin{equation}
\TE(P)=\sum_{i=1}^t\TE(U_i)+\sum_{i=1}^t\CE\left(\sum_{k=1}^{i-1}U_k,U_i\right),
\end{equation}
\begin{equation}
\cov(P)=\sum_{i=1}^t\cov(U_i)+\sum_{i=1}^t\left(\coop\left(\sum_{k=1}^{i-1}U_k,U_i\right)-\DC\left(\sum_{k=1}^{i-1}U_k,U_i\right)\right).\label{eq2}
\end{equation}

\end{claim}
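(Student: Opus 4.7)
The plan is to apply Claim \ref{szetbontas} and the definition of cooperation excess iteratively, turning both identities into telescoping sums. I would begin by setting $P_i=\sum_{k=1}^{i}U_k$, with the convention that $P_0$ is the empty (all-zero) distribution, so $P_t=P$. Since $\mathcal{D}$ is a disjoint decomposition of $P$ into units, the support of $U_i$ is a single vertex that is not occupied by any $U_k$ with $k<i$; hence $P_{i-1}$ and $U_i$ are disjoint at every step, which is exactly the hypothesis needed to invoke Claim \ref{szetbontas} and the definition of $\CE$.

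For equation \eqref{eq2}, I would proceed by induction on $i$. At the inductive step, Claim \ref{szetbontas} applied to the disjoint distributions $P_{i-1}$ and $U_i$ yields
\[
\cov(P_i)=\cov(P_{i-1})+\cov(U_i)+\coop(P_{i-1},U_i)-\DC(P_{i-1},U_i).
\]
Summing these equalities from $i=1$ to $t$ telescopes the $\cov(P_{i-1})$ terms and gives the desired expression for $\cov(P)=\cov(P_t)$.

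The total excess identity follows by the same recipe, but with the role of Claim \ref{szetbontas} replaced by the \emph{definition} of cooperation excess. Indeed, by definition,
\[
\CE(P_{i-1},U_i)=\TE(P_i)-\TE(P_{i-1})-\TE(U_i),
\]
so rearranging gives $\TE(P_i)=\TE(P_{i-1})+\TE(U_i)+\CE(P_{i-1},U_i)$. A second telescoping sum from $i=1$ to $t$, using $\TE(P_0)=0$, produces equation (1).

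The proof is essentially a bookkeeping argument, so there is no genuine obstacle. The only point that deserves explicit care is the step requiring pairwise disjointness of $P_{i-1}$ and $U_i$, which is where we use that $\mathcal{D}$ consists of unit distributions on distinct vertices rather than an arbitrary decomposition; everything else is a formal telescoping identity.
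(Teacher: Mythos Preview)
Your proposal is correct and matches the paper's own justification, which is simply the sentence preceding the claim stating that it follows by applying Claim~\ref{szetbontas} and the definition of cooperation excess to the successive partial sums. Your telescoping write-up is exactly the unpacking of that sentence; the only remark is that Claim~\ref{szetbontas} is a pure inclusion--exclusion identity that holds without the disjointness hypothesis, so that hypothesis is not actually load-bearing here.
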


Both $\sum_{i=1}^t\TE(U_i)$ and $\sum_{i=1}^t\cov(U_i)$ can be calculated easily. The ``effect'' of cooperation is calculated in the other, more complicated terms. Lemma \ref{vertex-excess} is going to establish a connection between those quantities in a fruitful way.

\subsection{Connection between cooperation and excess}
Now let us  consider an arbitrary graph $G$, and let $\Delta$ be the
maximum degree of $G$.  In the rest of the section we assume that
$Q=U$ is a unit having pebbles only at vertex $u$ and its size is not
zero. Now we state some basic claims about the recently defined
objects.

\begin{claim} Each cooperation vertex $c$ has a neighbor that has
  cooperation excess.
\end{claim}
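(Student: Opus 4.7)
The plan is to identify the required neighbor $c'$ as the last vertex from which a pebbling move lands on $c$, and then argue that its reachability ``jumps'' only because of the combined distribution. First, since $c$ is a cooperation vertex, it is reachable under $P+Q$, so there is a pebbling sequence under $P+Q$ that eventually places a pebble on $c$. Look at the last move in this sequence: it must remove two pebbles from some neighbor $c'$ of $c$. This shows that $c'$ is $2$-reachable under $P+Q$, i.e.\ $\reach(P+Q,c')\geq 2$ and therefore $\exc(P+Q,c')\geq 1$.

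Next I would show that $c'$ cannot be $2$-reachable under $P$ alone. Suppose it were; then we could apply the witnessing pebbling sequence using only the pebbles of $P$, reach two pebbles on $c'$, and then do one more move from $c'$ to $c$. This would make $c$ reachable under $P$, contradicting the fact that $c$ is a cooperation vertex. Hence $\reach(P,c')\leq 1$, so $\exc(P,c')=0$. The identical argument with $Q$ in place of $P$ gives $\exc(Q,c')=0$.

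Combining the two previous paragraphs,
\[
\exc(P+Q,c')-\bigl(\exc(P,c')+\exc(Q,c')\bigr)\geq 1-0-0=1>0,
\]
so $c'$ has cooperation excess by definition, and $c'$ is a neighbor of $c$ as required.

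I do not foresee a serious obstacle; the whole content of the claim is the pair of ``transfer'' observations that (i) the last pebbling move that lands on $c$ certifies $2$-reachability of a neighbor, and (ii) $2$-reachability under $P$ (or $Q$) alone would immediately upgrade to $1$-reachability of $c$, which is forbidden. The only subtlety to be careful about is making sure the witnessing pebbling sequence is genuinely executable under the smaller distribution in step two; since $P$ and $Q$ are disjoint, the subsequence of moves that would already suffice under $P$ uses only pebbles originally placed by $P$, so no issue arises.
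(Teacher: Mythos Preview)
Your proof is correct and follows essentially the same approach as the paper: identify a neighbor that is $2$-reachable under $P+Q$ via the last move landing on $c$, then observe that no neighbor of $c$ can be $2$-reachable under $P$ or $Q$ alone (else $c$ would already be reachable), so this neighbor has positive cooperation excess. Your final paragraph's ``subtlety'' is a non-issue---the witnessing sequence for $2$-reachability of $c'$ under $P$ is by definition already executable under $P$ alone, so no appeal to disjointness or subsequences is needed.
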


\begin{proof}
  A cooperation vertex $c$ is not reachable under $P$ or
  $U$. Therefore none of its neighbors is $2$-reachable under these
  distributions. On the other hand, $c$ is reachable under $P+U$,
  hence there is a neighbor $n$ of $c$ which is $2$-reachable under
  this distribution. This means that $n$ has cooperation excess.
\end{proof}

\begin{df}
If a vertex is not a cooperation vertex and it does not have cooperation excess, then we call it \emph{cooperation free}.
\end{df}
%Later we will see, that 
This name is a somewhat
misleading, because these vertices can participate in cooperation in a
sophisticated way. For an example see Figure \ref{coopfree}.

\begin{figure}
\centering
\scalebox{0.9}{\input{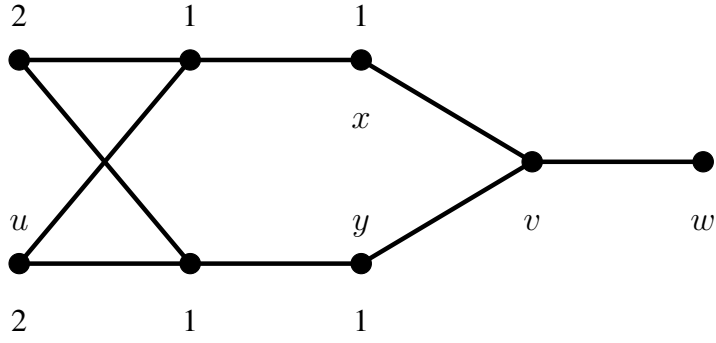}}
\caption{Vertices $x$ and $y$ are both cooperation free, but $v$ has cooperation excess and $w$ is a cooperation vertex.}
\label{coopfree}
\end{figure}

\begin{df} 
Let $\sigma$ be a pebbling sequence. $P_\sigma$ denotes the pebble distribution which is obtained by the application of $\sigma$ to distribution $P$.
A vertex is utilized by a pebbling sequence if there is a move in the sequence which removes or adds a pebble to the vertex.
Let $M(v)$ be the minimal number of cooperation vertices, including $v$ if it is a cooperation vertex, which are utilized by a pebbling sequence $\sigma$ which satisfies that $(P+U)_\sigma(v)\geq 2$. %\textcolor{red}{If $v$ is not 2-reachable under $P+U$, then we say that $M(v)$ is undefined.}
If $v$ is not 2-reachable under $P+U$, then we say that $M(v)=\infty$.
%% EZ 3 VOLT, DE SZERINTEM CSAK ELIRAS VOLT-- Az volt.
\end{df}

An example where $M()$ is shown for each vertex can be seen in Figure \ref{Mvalueexample}.

\begin{figure}
\centering
\scalebox{0.9}{\input{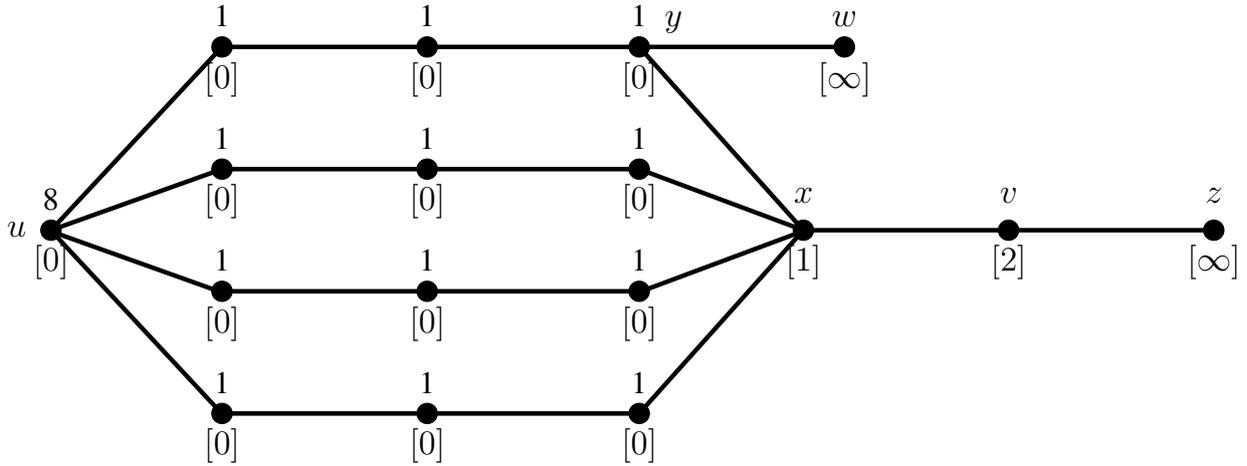}}
\caption{Vertices $w$, $x$, $v$ and $z$ are cooperation vertices. The $M$ values are written in brackets.}
\label{Mvalueexample}
\end{figure}

\begin{claim}
If there is an available pebbling move which removes a pebble from a cooperation vertex $c$, then either two neighbors of $c$, say $e$ and $f$, have cooperation excess at least 1 with $M(e)<M(c)$ and $M(f)<M(c)$ or a neighbor $d$ has cooperation excess  at least 3 with $M(d)<M(c)$.
\label{3cexess}
\end{claim}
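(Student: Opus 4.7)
The plan is to track how two pebbles arrive at $c$ in a pebbling sequence that realizes $M(c)$. I start by recording two consequences of $c$ being a cooperation vertex. First, $P(c)=U(c)=0$, since otherwise $c$ would be reachable under $P$ or $U$ alone. Second, no neighbor $v$ of $c$ can be $2$-reachable under $P$ alone (or under $U$ alone), because one extra pebbling move from $v$ to $c$ would then reach $c$ under that single distribution, contradicting the definition of a cooperation vertex. Hence $\exc(P,v)=\exc(U,v)=0$ for every neighbor $v$ of $c$, so at such a $v$ the cooperation excess coincides with $\exc(P+U,v)$ itself, and it suffices to exhibit the required $\reach(P+U,\cdot)$ values.

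Next I fix a pebbling sequence $\sigma$ from $P+U$ with $(P+U)_{\sigma}(c)\geq 2$ that utilizes exactly $M(c)$ cooperation vertices. After shortening if necessary, I assume $\sigma$ never removes a pebble from $c$, so exactly two moves of $\sigma$ deposit pebbles onto $c$; call these $m_1,m_2$ and let $e,f$ be their sources (possibly $e=f$). I then split into Case 1, $e\neq f$, and Case 2, $e=f=:d$, corresponding to the two alternatives in the conclusion.

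In Case 1, I delete both $m_1$ and $m_2$ from $\sigma$. Since $\sigma$ never removed pebbles from $c$, the pebbles that $m_1$ and $m_2$ deposited are never consumed later, so the resulting sequence remains valid. At its end $e$ retains the two pebbles that $m_1$ would have removed, and $f$ retains the two that $m_2$ would have removed; so $\reach(P+U,e),\reach(P+U,f)\geq 2$, yielding cooperation excess at least $1$ at both by the first paragraph. The modified sequence avoids $c$ entirely, so it utilizes at most $M(c)-1$ cooperation vertices, giving $M(e),M(f)<M(c)$.

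In Case 2, I take the prefix $\tau$ of $\sigma$ up to just before $m_2$ and delete the earlier move $m_1$ from $\tau$. Validity holds for the same reason as above. Just before $m_2$ the vertex $d$ had $\geq 2$ pebbles (as $m_2$ was about to fire), and removing $m_1$ returns another $2$ pebbles to $d$, so the modified $\tau$ leaves $d$ with $\geq 4$ pebbles; hence $\reach(P+U,d)\geq 4$ and the cooperation excess at $d$ is at least $3$. The modified $\tau$ never touches $c$, so $M(d)<M(c)$. The delicate step I expect throughout is precisely this validity-under-deletion argument: the ``no move off $c$'' assumption (which minimality of $\sigma$ provides for free) is what guarantees that deleting moves which deposited pebbles on $c$ cannot invalidate any subsequent move.
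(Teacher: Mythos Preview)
Your proof is correct and follows essentially the same line as the paper's: pick a sequence $\sigma$ realizing $M(c)$, look at the (two) moves depositing onto $c$, and peel them off to exhibit $2$-reachability (resp.\ $4$-reachability) at the neighbor(s) via a sequence that no longer touches $c$. Your write-up is in fact more careful than the paper's about why the deleted/truncated sequences remain executable; the only small imprecision is that ``$\sigma$ never removes a pebble from $c$'' does not by itself force \emph{exactly} two deposits onto $c$---you also need to truncate after the second deposit (which you may do without increasing the number of cooperation vertices utilized), and your ``shortening if necessary'' should be read as covering this.
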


\begin{proof}
The condition implies that  $c$ can obtain two pebbles by some pebbling moves under $P+U$. 
Consider a pebbling sequence $\sigma$ which does this by utilizing $M(c)$ cooperation vertices. 
Either $\sigma$ moves the two pebbles to $c$ from two different neighbors $e$ and $f$, or it can move both pebbles from the same neighbor $d$. 
None of the neighbors are $2$-reachable under $P$ or $U$, but $e$, $f$ and $d$ has to be $2$, $2$ and $4$ reachable under $P+U$, respectively. 
This means that $e$ and $f$ have cooperation excess at least 1 and the cooperation excess of $d$ is at least 3. 
Furthermore, $\sigma$ moves two pebbles to $e$ and $f$ or to $d$, then it moves them to $c$ with some more moves. 
This shows that $M(e),M(f),M(d)<M(c)$. 
\end{proof}

%LEHET HOGY EZ KELL, MAJD MEGLATJUK
\begin{claim}
%ez a korĂĄbbiban nem szerepel
If the cooperation excess  of a vertex $v$ is at least 3 and one of its neighbors, say $c$, is a cooperation vertex, then there is a vertex $w$ that is adjacent to $v$ and $M(w)\leq M(v)$. %If $v$ is a cooperation vertex, then strict inequality holds.
\label{3asszomszeda}
\end{claim}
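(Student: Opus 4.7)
The hypothesis ``cooperation excess at least $3$'' forces $\exc(P+U,v)\ge 3$ and hence $\reach(P+U,v)\ge 4$, so $M(v)$ is finite and is realised by some pebbling sequence $\sigma$ which places at least two pebbles on $v$ and utilises exactly $M(v)$ cooperation vertices. The plan is to pick the required neighbor $w$ from among the senders of moves into $v$ in $\sigma$.

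I first prune $\sigma$. Any trailing move that does not touch $v$ can be deleted, since the target condition on $v$ is already met. If the last move that does touch $v$ removes a pebble from $v$, truncate $\sigma$ just before that move, since at that moment $v$ already carried at least three pebbles. After these reductions $\sigma$ is either empty, or ends with a move of the form $w\to v$ for some neighbor $w$ of $v$. Trimming only deletes moves, so the set of cooperation vertices utilised by the trimmed sequence is a subset of the original set and is still of size at most $M(v)$.

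In the main (non-empty) case, let $\sigma'$ be $\sigma$ with the final move $w\to v$ deleted. That move consumed two pebbles from $w$, so at the end of $\sigma'$ there remain at least two pebbles on $w$; and every cooperation vertex utilised by $\sigma'$ is already utilised by $\sigma$. The sequence $\sigma'$ therefore witnesses $M(w)\le M(v)$, and this $w$ is the required neighbor. The straightforward pruning-and-reading-off-the-last-move argument is the heart of the proof.

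The only remaining case is $\sigma$ empty, where $M(v)=0$ and $(P+U)(v)\ge 2$. The assumption that $c$ is a cooperation vertex rules out $P(v)\ge 2$ and $U(v)\ge 2$ (otherwise the move $v\to c$ would be legal under $P$ alone or $U$ alone, contradicting that $c$ is neither in $\cov(P)$ nor in $\cov(U)$), forcing $P(v)=U(v)=1$. Combining this with cooperation excess $\ge 3$ gives $\reach(P+U,v)\ge \reach(P,v)+\reach(U,v)+2\ge 4$, so two additional pebbles can be moved onto $v$ from its neighbors. I expect the main technical obstacle to lie here: one has to argue that among the neighbors contributing these two extra pebbles there is at least one $w$ whose own $2$-reachability can be certified without utilising any cooperation vertex, giving $M(w)=0=M(v)$; otherwise the asserted neighbor $w$ need not exist.
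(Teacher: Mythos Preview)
Your main argument (the non-empty case) is correct and is exactly the paper's proof: since $c$ is a cooperation vertex, $(P+U)(v)\le 1$, so any sequence realising $M(v)$ must contain a move $w\to v$ from some neighbor $w$; truncating just before that move leaves $\ge 2$ pebbles on $w$ while utilising only a subset of the cooperation vertices, giving $M(w)\le M(v)$.

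Your worry about the ``empty case'' is misplaced, and the case $P(v)=U(v)=1$ you arrive at is impossible. Throughout this section $P$ and $U$ are assumed to be \emph{disjoint} distributions (this is a standing hypothesis stated at the beginning of Section~3, and $U$ is a unit supported only at $u$ with $P(u)=0$). Hence $(P+U)(v)\ge 2$ forces either $P(v)\ge 2$ or $U(v)\ge 2$, and you have already ruled out both of these using the fact that $c$ is a cooperation vertex. So the empty case simply cannot occur, and there is no residual ``technical obstacle''. The paper dispatches this in one sentence: ``$v$ does not have two pebbles under $P+U$, otherwise $c$ cannot be a cooperation vertex.'' Once you invoke disjointness, your proof is complete and matches the paper's.
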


\begin{proof}
 Note that $v$ does not have two pebbles under $P+U$, otherwise $c$ can not be
  a cooperation vertex. Vertex $v$ obtains pebbles from its neighbors, so
  one of them, say $w$, can get two pebbles by utilizing at most
  $M(v)$ cooperation vertices. If $v$ is a cooperation vertex\textcolor{red}{,} then a
  pebbling sequence resulting in two pebbles at $v$ utilizes more
  cooperation vertices than the sequence which does not make the final
  move from $w$ to $v$.
\end{proof}

\begin{claim}
If a vertex $v$ has cooperation excess, then it has a neighbor which has cooperation excess or reachable under $P$ or $U$.
\label{twotypeofcoop}
\end{claim}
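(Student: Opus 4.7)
The plan is to prove the contrapositive: assuming that every neighbor of $v$ is cooperation--free and unreachable under both $P$ and $U$, I will show that $v$ itself carries no cooperation excess. The disjointness of $P$ and $U$ (the standing hypothesis of this section) together with this assumption should force $\reach(P,v)$, $\reach(U,v)$, and $\reach(P+U,v)$ to coincide with the initial pebble count at $v$, after which the identity $\exc(P+U,v)=\exc(P,v)+\exc(U,v)$ drops out.

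First I would fix an arbitrary neighbor $w$ of $v$ and unpack the two assumptions on it. Since $w$ is reachable under neither $P$ nor $U$, both $\exc(P,w)$ and $\exc(U,w)$ vanish, so ``no cooperation excess at $w$'' collapses to $\exc(P+U,w)\leq 0$, which gives $\exc(P+U,w)=0$. In particular $w$ is not $2$-reachable under $P+U$, and hence also not $2$-reachable under $P$ alone or $U$ alone, since those distributions are dominated by $P+U$.

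Next I would argue that no pebbling move under any of $P$, $U$, or $P+U$ can deposit an additional pebble on $v$. Indeed, any move that adds a pebble to $v$ originates at a neighbor $w$ which, at the moment of the move, must carry at least two pebbles; truncating the pebbling sequence immediately before that move would exhibit $w$ as $2$-reachable, contradicting the previous step. Consequently $\reach(P,v)=P(v)$, $\reach(U,v)=U(v)$, and $\reach(P+U,v)=P(v)+U(v)$.

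The final step is a short case analysis based on disjointness: at least one of $P(v)$ and $U(v)$ is zero, and in every subcase a direct computation gives $\exc(P+U,v)=\exc(P,v)+\exc(U,v)$, so $v$ has no cooperation excess, contradicting the hypothesis. The step I expect to require the most care is the ``no move reaches $v$'' argument, because one must pinpoint the specific neighbor that performs a deposit onto $v$ and promote it to a $2$-reachability witness, rather than reason collectively over the neighborhood. The disjointness hypothesis is also essential in the closing case analysis: without it, the configuration $v=u$ with $P(v)\geq 1$ would artificially create a unit of cooperation excess at $v$ that no neighborhood hypothesis could rule out.
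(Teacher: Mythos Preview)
Your proof is correct and rests on the same core observation as the paper's two-line argument: cooperation excess at $v$ forces some neighbor to be $2$-reachable under $P+U$, and such a neighbor is then either already $2$-reachable under $P$ or $U$ (hence reachable) or has cooperation excess by definition. The paper states this directly; you run the contrapositive and are more explicit about why no pebble can be moved onto $v$ and about the disjointness case analysis at the end, which the paper leaves implicit.

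One small terminological slip: the contrapositive hypothesis should be that every neighbor \emph{has no cooperation excess} and is unreachable under $P$ and $U$. ``Cooperation-free'' is strictly stronger in this paper's terminology (it also requires not being a cooperation vertex). Your argument never actually uses the ``not a cooperation vertex'' part, so the reasoning is unaffected, but you should adjust the wording to match the statement you are negating.
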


\begin{proof}
Vertex $v$ gets a pebble under $P+U$, so a neighbor $n$ is $2$-reachable under $P+U$. If $n$ is not 2-reachable under $P$ or $U$, then it has cooperation excess.
\end{proof}

\begin{remark}
In fact, a stronger property holds. If a vertex $v$ gains an extra pebble by cooperation, then it can happen in two ways: A neighbor gained extra pebbles and it passes one of them. Or there are two or more neighbors of $v$ such that each of them can give some pebbles to $v$, but these moves somehow blocks each other. The advantage of the cooperation is that some previously blocked moves can be done simultaneously. This is the way how cooperation free vertices can ``help cooperation''.
\end{remark}

\subsection{Trajectories}
Here we introduce a visualization of pebbling sequences, which is slightly different from the signature digraph used in several pebbling papers (i.e. in \cite{NPhard}). 

\begin{df}
The \emph{trajectory} of a pebbling sequence $\sigma$, denoted by $T(\sigma)$, is a digraph on the vertices of $G$ without parallel edges, where $(u,v)$ is a directed edge if and only if a pebbling move $u\rightarrow v$ is contained in the sequence.

\end{df}
\begin{df}
  The \emph{size of a pebbling sequence} is the total number of moves
  contained in it. We say that $\sigma$ is a \emph{minimal pebbling
    sequence with property $p$} if its size is minimal among all
    pebbling sequences having property $p$.

\end{df}

In the next proof we need a lemma which is frequently used to
solve pebbling problems. It is called No-Cycle Lemma and proved in
several papers \cite{cover_tuza,NPhard,Moews_pebbling}. We state this
lemma in the language of this paper.

\begin{lemma}[No-Cycle \cite{Moews_pebbling}]
Let $P$ be a pebble distribution on graph $G$, and $\sigma$ be a pebbling sequence. There is a subsequence $\delta$ whose trajectory does not contain directed cycles and $P_{\sigma}(v)\leq~P_{\delta}(v)$ for each vertex $v$. 
\end{lemma}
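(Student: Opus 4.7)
I would take $\delta$ to be a subsequence of $\sigma$, executed in the inherited order, that satisfies $P_\delta(v) \geq P_\sigma(v)$ for every $v$ and has minimum length subject to this property. Such a $\delta$ exists because $\sigma$ itself qualifies. The goal is then to derive a contradiction from the assumption that $T(\delta)$ contains a directed cycle, which would force $T(\delta)$ to be acyclic and finish the proof.

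So suppose $T(\delta)$ contains a directed cycle $C : v_0 \to v_1 \to \cdots \to v_{k-1} \to v_0$. I would construct a strictly shorter executable subsequence $\delta' \subseteq \delta$ with $P_{\delta'}(v) \geq P_\delta(v)$ for all $v$ by deleting, for each edge of $C$, exactly one move of $\delta$ that uses that edge. The pebble bookkeeping at each cycle vertex $v_i$ is clean: one outgoing move along $C$ is killed (so $2$ pebbles are no longer subtracted from $v_i$) and one incoming move along $C$ is killed (so $1$ pebble is no longer added to $v_i$), giving a net change of $+1$ at $v_i$; vertices off $C$ are unaffected. Combined with $P_\delta(v) \geq P_\sigma(v)$, this would yield $P_{\delta'}(v) \geq P_\sigma(v)$ pointwise, contradicting minimality.

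The main obstacle, and the step I expect to require the most care, is ensuring executability of $\delta'$: naively deleting moves from the middle of an executable sequence can make a later move fail for lack of pebbles. My plan is to combine the deletion with an adjacent-swap argument: two consecutive moves in an executable sequence can be swapped without breaking executability whenever the second does not consume a pebble that was placed by the first. Using such swaps I would migrate the $k$ chosen cycle-moves to the very end of $\delta$ and then simply chop them off; the resulting prefix is executable, and the final distribution gains $+1$ at each vertex of $C$ by the accounting above. The technically delicate piece is verifying that the swaps can always be pushed through until the cycle-moves reach the end of $\delta$; I would settle this by descending induction on the latest position in $\delta$ occupied by a cycle-move of $C$, exploiting the fact that any later move that temporarily ``blocks'' a swap is itself amenable to the same kind of rearrangement, since a block requires the later move to consume a pebble just produced, which flow-accounting along $C$ eventually rules out.
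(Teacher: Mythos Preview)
The paper does not prove this lemma; it is quoted from \cite{Moews_pebbling,NPhard,cover_tuza} and used as a black box, so there is no in-paper argument to compare against.

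On the substance of your attempt: the minimality framework and the $+1$ accounting at each cycle vertex are correct, but the executability step has a genuine gap that your sketch does not close. When a cycle-move $m=(a\to b)$ is blocked from swapping past its right neighbour $m'=(b\to w)$, nothing forces $m'$ to be a cycle-move or $w$ to lie on $C$, so ``flow-accounting along $C$'' has no purchase on $m'$. Concretely, on the $4$-cycle $a\,b\,c\,d$ with an extra edge $be$, take $P(a,b,c,d,e)=(2,1,6,2,0)$ and
\[
\delta=(a\!\to\! b)(b\!\to\! e)(c\!\to\! b)(c\!\to\! b)(b\!\to\! c)(c\!\to\! d)(d\!\to\! a).
\]
The cycle $a\to b\to c\to d\to a$ lies in $T(\delta)$, yet $(a\to b)$ cannot be swapped past $(b\to e)$, and deleting the four cycle-moves in the inherited order leaves $(b\!\to\! e)(c\!\to\! b)(c\!\to\! b)$, which is not executable since $P(b)=1$. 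Your swap argument nowhere invokes the minimality of $\delta$, so you have no lever to rule this situation out. To rescue executability you must allow the surviving non-cycle moves to be reordered as well (here $(c\!\to\! b)(b\!\to\! e)(c\!\to\! b)$ does work), and once you allow reordering the clean route is the one in the cited sources: observe that the final distribution depends only on the multiset of moves; strip cycles from the multiset until the move-digraph is acyclic, which only raises every coordinate of the formal final distribution; then order greedily by repeatedly firing all moves out of a source $v$, which is legal because at a source the balance condition $P(v)+\mathrm{indeg}(v)-2\,\mathrm{outdeg}(v)\ge 0$ reduces to $P(v)\ge 2\,\mathrm{outdeg}(v)$.
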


This implies the following corollary:
\begin{corollary}
If $\sigma$ is a minimal pebbling sequence which moves $m$ pebbles to a vertex $v$, then its trajectory is acyclic.
\end{corollary}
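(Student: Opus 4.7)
The plan is to proceed by contradiction, using the No-Cycle Lemma as a black box. Assume that $\sigma$ is a minimal pebbling sequence delivering $m$ pebbles to $v$, but that $T(\sigma)$ nevertheless contains a directed cycle. Applying the No-Cycle Lemma to $\sigma$ yields a subsequence $\delta$ of $\sigma$ whose trajectory is acyclic and which satisfies $P_\delta(u) \geq P_\sigma(u)$ for every vertex $u$.

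Evaluating this last inequality at $v$ gives $P_\delta(v) \geq P_\sigma(v) \geq m$, so $\delta$ is itself a pebbling sequence that moves (at least) $m$ pebbles to $v$. Next I would argue that $\delta$ contains strictly fewer moves than $\sigma$: since $T(\sigma)$ contains a directed cycle but $T(\delta)$ does not, there must be some edge $(x,y)$ of $T(\sigma)$ that fails to appear in $T(\delta)$. By the definition of trajectory, an edge $(x,y)$ belongs to a trajectory exactly when the corresponding move $x\to y$ occurs at least once in the sequence; hence every occurrence of $x\to y$ present in $\sigma$ has been deleted when forming $\delta$, and in particular $|\delta|<|\sigma|$. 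This contradicts the assumed minimality of $\sigma$, so $T(\sigma)$ must be acyclic.

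There is essentially no difficult step here: the corollary is really just unpacking what the No-Cycle Lemma already guarantees. The only subtle point worth underlining is the interpretation of ``subsequence'' in the lemma statement. One must read it as an honest sub-multiset of the move sequence (dropping moves, not merely reordering them), which is the standard convention in the references \cite{Moews_pebbling,NPhard,cover_tuza}. Granting this, the argument is a direct three-line contradiction and no further machinery is needed.
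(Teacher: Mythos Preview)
Your argument is correct and is exactly what the paper has in mind: the paper gives no separate proof but simply writes ``This implies the following corollary,'' and your three-line contradiction is the natural way to unpack that implication from the No-Cycle Lemma.
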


\begin{claim}
If $u$ has cooperation excess under $P+U$, where $|U|>0$ , then $u$ is double covered.
\label{Udouble}
\end{claim}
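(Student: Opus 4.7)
The plan is first to reduce the claim to a reachability statement about $u$ alone, and then to prove it by a two-colouring argument on a witnessing pebbling sequence. Because $u$ carries pebbles under $U$, we have $\reach(U,u)=|U|$ and $\exc(U,u)=|U|-1$, so ``$u$ is double covered'' reduces to ``$u$ is reachable under $P$''. Unpacking the hypothesis $\exc(P+U,u)>\exc(P,u)+\exc(U,u)$ yields $\reach(P+U,u)-1>\exc(P,u)+(|U|-1)$. If $u$ is already reachable under $P$ we are done, so I would assume otherwise; then $\exc(P,u)=0$ and the whole task becomes deriving a contradiction from $\reach(P+U,u)\geq |U|+1$.

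To obtain this contradiction I would fix a pebbling sequence $\sigma$ under $P+U$ that places $\reach(P+U,u)$ pebbles on $u$ and colour the pebbles: the $|U|$ initial pebbles at $u$ are red, and the $P$-pebbles (which, by disjointness, all live at vertices different from $u$) are blue. As $\sigma$ is executed, the output pebble of each move is declared blue iff both consumed pebbles are blue and red otherwise, with the tie-breaking rule that red pebbles at the source are consumed first whenever possible. Under this colouring the total number of red pebbles in the system is non-increasing, so after $\sigma$ the graph holds at most $|U|$ red pebbles, in particular at most $|U|$ of them at $u$. Since $u$ ends up with at least $|U|+1$ pebbles, at least one of them must be blue.

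Next I would extract the subsequence $\sigma_B\subseteq\sigma$ of blue-producing moves, kept in their original time order, and argue that (i) $\sigma_B$ is executable under the pure distribution $P$, and (ii) its execution leaves at least one pebble at $u$. Both points rest on one invariant: at every vertex $v$ and every intermediate time, the pebble count at $v$ during the $P$-only execution of $\sigma_B$ is at least the number of blue pebbles at $v$ at the same moment in $\sigma$. The content of this invariant is that, thanks to the ``red-first'' rule, a red-producing move from $v$ consumes at most one blue pebble, so any blue loss that $v$ suffers under $\sigma$ but not under $\sigma_B$ only helps the $P$-only count at $v$. Applied at each move of $\sigma_B$ the invariant gives executability, and applied at $u$ at the final time it gives at least one pebble on $u$ under $P$, contradicting the assumption that $u$ is not reachable under $P$. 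I expect the main obstacle to be formulating and maintaining this invariant cleanly: once it is pinned down that red-producing moves can leak at most one blue pebble, both the executability of $\sigma_B$ and the nonzero final count at $u$ drop out of the same monotonicity comparison.
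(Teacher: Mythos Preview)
Your proof is correct, but it takes a substantially heavier route than the paper's. The paper exploits a structural feature you never use: because $U$ is a \emph{unit} concentrated at $u$, every pebble of $U$ already sits at $u$. By the No-Cycle Lemma (or by passing to a minimal sequence), one may realise $\reach(P+U,u)\geq |U|+1$ without ever removing a pebble from $u$; but then the $|U|$ pebbles of $U$ are inert spectators, and the entire sequence is executable under $P$ alone and delivers at least one pebble to $u$. That is the whole argument.

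Your two-colour ``blue subsequence'' argument is a valid and self-contained alternative: it does not invoke the No-Cycle Lemma and would in fact prove the stronger statement that if $\reach(P+Q,u)>\reach(P,u)+\reach(Q,u)$ then\ldots\ well, actually there it would prove nothing new, but the machinery would survive $U$ not being a unit. The invariant you describe (that the $P$-only count dominates the blue count at every vertex and time, thanks to the red-first consumption rule) is correctly stated and does carry both executability of $\sigma_B$ and the final lower bound at $u$. So nothing is wrong; you have simply re-proved a special case with a general-purpose tool, where the paper gets away with a two-line observation tailored to the hypothesis that $U$ is a unit at $u$.
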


\begin{proof}
The No-Cycle lemma yields that we can move the maximum possible number of pebbles to $u$ without removing a pebble from  $u$. We can move $\reach(U,u)+1$ pebbles to $u$, which means that we move here a pebble of $P$ while we keep the pebbles of $U$, so $u$ is double covered.
\end{proof}

%%We are moving forward on the path leading to Lemma \ref{vertex-excess}. 
%\textbf{Itt toroltem egy mondatot. Ez vagy nem kell, vagy jobban be kene vezetni.}
The following definition will be crucial in the
proof.

\begin{df}
We say that a path is a \emph{coopexcess path}, if each inner vertex of the path has cooperation excess. 
\end{df}

\begin{lemma}
Let $v$ be a vertex which is not double covered but it has cooperation excess. 
There is a coopexcess path between $v$ and a double covered vertex or there are at least two cooperation free vertices such that each of them is connected to $v$ by a coopexcess path.
If $v$ is not 2-reachable under both $P$ and $U$, then these paths does not contain a vertex whose $M$ value is higher than $M(v)$. 
\label{Vanjoszomszed}
\end{lemma}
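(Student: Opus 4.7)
The plan is to analyze a minimum pebbling sequence $\sigma$ that places $2$ pebbles at $v$ under $P+U$ while utilizing exactly $M(v)$ cooperation vertices; such $\sigma$ exists because cooperation excess at $v$ forces $v$ to be $2$-reachable under $P+U$. By the No-Cycle Lemma we may assume $T(\sigma)$ is acyclic, so the flow of pebbles toward $v$ forms a DAG that we can trace backwards. The strengthened form of Claim \ref{twotypeofcoop} (visible in its proof) tells us that every in-neighbor of $v$ in $T(\sigma)$ is $2$-reachable under $P+U$, and therefore falls into one of three classes: it has cooperation excess, it is $2$-reachable under $P$, or it is $2$-reachable under $U$. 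This trichotomy is the basic inductive building block.

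To construct the coopexcess paths, I would grow them from $v$ recursively. Since $v$ is not double covered, WLOG assume $v$ is not reachable under $U$; then no neighbor of $v$ is $2$-reachable under $U$, so every in-neighbor is either $2$-reachable under $P$ (and, if it lacks cooperation excess, is cooperation-free) or has cooperation excess (in which case I recurse on it, using its own minimum-delivering sub-sequence drawn from $T(\sigma)$). The recursion terminates because $T(\sigma)$ is finite and acyclic. If any recursive step reaches a double-covered vertex, we output that single coopexcess path and stop. Otherwise the two pebbles delivered to $v$ through $T(\sigma)$ must originate from $P$-sources: either two distinct in-neighbors of $v$ each push one pebble, giving rise to two distinct coopexcess paths to cooperation-free endpoints, or a single in-neighbor $n$ pushes two pebbles --- so $n$ is $4$-reachable under $P+U$ with cooperation excess at least $3$, and I would invoke Claim \ref{3asszomszeda} (together with the argument style of Claim \ref{3cexess}) to split the flow at $n$ into two distinct coopexcess path extensions yielding two distinct cooperation-free endpoints.

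The main obstacle I foresee is controlling the $M$-values along the constructed paths under the hypothesis that $v$ is not $2$-reachable under either $P$ or $U$ individually. I would use the minimality of $\sigma$: any sub-DAG of $T(\sigma)$ that delivers $2$ pebbles to an intermediate coop-excess vertex $w$ corresponds to a pebbling sequence utilizing at most as many cooperation vertices as $\sigma$ does, so $M(w) \leq M(v)$ along every inner vertex of the constructed paths. A refinement of this argument, together with Claim \ref{3asszomszeda} in the high-reach single-in-neighbor case, should enforce the desired inequality everywhere. Ensuring the paths are \emph{simple} and that the $M$-bound genuinely propagates through both the ordinary recursive step and the ``split-flow at the $4$-reachable neighbor'' case looks like the most delicate bookkeeping, and is where I expect the bulk of the technical effort to go.
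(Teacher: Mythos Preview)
Your outline has the right spirit but misses two structural ingredients that the paper's proof relies on, and without them the single-cut-vertex case does not close.

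First, your choice of $\sigma$ is too weak. You take a sequence placing $2$ pebbles at $v$, but if $v$ happens to be $2$-reachable under $P$ alone (which is compatible with having cooperation excess and not being double covered), a minimum such $\sigma$ may live entirely inside $P$ and never touch $u$ or any $U$-pebble. The paper instead chooses $\sigma$ to move $\reach(P,v)+\reach(U,v)+1$ pebbles to $v$; this number exceeds both $\reach(P,v)$ and $\reach(U,v)$, so the trajectory is forced to contain a path from $u$ to $v$. That path is the backbone of the whole argument: one looks for the cooperation-free vertex on each $u$--$v$ path in $T(\sigma)$ that is closest to $v$.

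Second, and more seriously, your ``split the flow'' step does not guarantee two \emph{distinct} cooperation-free endpoints. Even if a single in-neighbour $n$ is $4$-reachable and Claim~\ref{3cexess} gives two neighbours with cooperation excess, the backward recursions from those two neighbours can reconverge at a single cooperation-free vertex $w$. Claim~\ref{3asszomszeda} only produces one neighbour with small $M$-value, not two disjoint continuations. This convergence case is exactly the hard part of the paper's proof: when all $u$--$v$ paths in $T(\sigma)$ pass through a unique cooperation-free vertex $w$, the paper partitions the trajectory at $w$ into pieces $\mathcal{U},\mathcal{V},\mathcal{W}$ and compares the subsequence $\sigma_u\sigma_w$ against a $P$-only (or $U$-only) sequence $\delta$ achieving the same reach at $w$. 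The discrepancy between $\delta\sigma_v$ and $\sigma$ then forces either a double-covered vertex inside the $\mathcal{V}$-side trajectory or shows that $w$ itself has cooperation excess, contradicting its cooperation-freeness. Your proposal has no analogue of this replacement-and-comparison argument, and the claims you cite are not strong enough to manufacture one.
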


An example for the first case is shown in Figure \ref{Mvalueexample} where $y$ is a double covered vertex and $v,x,y$ is a coopexcess path. The second case can be seen in Figure \ref{coopfree}, where $v,x$ and $v,y$ are coopexcess paths connecting $v$ to cooperation free vertices.

\begin{proof}
Consider a pebbling sequence $\sigma$ moving $\reach(U,v)+\reach(P,v)+1$ pebbles to $v$  utilizing $M(v)$  cooperation vertices. Consider some path in the trajectory of $\sigma$  connecting $u$ to $v$. We can assume that the only sink in the trajectory of $\sigma$ is $v$. A cooperation vertex without cooperation excess can not be the tail of an arc which is contained in the trajectory, therefore each vertex in the trajectory is either cooperation free or it has cooperation excess.

If there is a path between $u$ and $v$ which is contained in the trajectory such that all vertices of this path have cooperation excess, then according to Claim \ref{Udouble} $u$ is double covered and this path is a coopexcess path. If an $u$, $v$ path which is included in the trajectory contains a vertex $d$ which is double covered and each vertex between $d$ and $v$ has cooperation excess, then it is a coopexcess path which we are looking for.
Otherwise, all of the $u,v$ paths which are contained in the trajectory contain cooperation free vertices. 

In each such  path let $w_i$ denote the cooperation free vertex which is the closest vertex to $v$. If  $w_i\neq w_j$ exist, then we have found 2 cooperation free vertices such that each of them is connected to $v$ by a coopexcess path.

In the remaining case there is only one such $w$. %\textcolor{red}{Now we show that this is not possible.} 
Either it is a cut vertex in the trajectory or $w=u$. 
Let $T$ be the set of vertices which are included in the trajectory. We divide $T$ to three sets $\mathcal{U}$, $\mathcal{V}$ and $\mathcal{W}$ in the following way:

We remove $w$ from the trajectory obtaining some components, then  we place a vertex $t$ of $T$ to $\mathcal{U}$ if $t$ is in the component containing $u$, similarly we place $t$ to $\mathcal{V}$ if it is in the  component containing $v$ and place the remaining vertices to $\mathcal{W}$. Now we add $w$ to all of these sets. Let $\sigma_u$ be the sequence of pebbling moves containing all moves of $\sigma$ which acts only on the vertices of $\mathcal{U}$. We define $\sigma_w$ and $\sigma_v$ similarly. The sources of the latter two sequences are only $w$ and vertices having pebbles under $P$.

If there is a cooperation free vertex in $\mathcal{V}$ which is not $w$, then the closest one to $v$ is connected to $v$ by a coopexcess path. Hence, assume that all vertices in the trajectory of $\delta_v$ have cooperation excess.

If $w$ is reachable under $U$, then $\sigma_w$ is empty ($w$ is not double covered) and $(P+U)_{\sigma_u}(w)\leq \reach(U,w)$. Since $w$ is cooperation free, we can replace $\sigma_u$ with a pebbling sequence $\delta$ which does not use any pebbles of $P$ and $(P+U)_{\sigma_u}(w)=(P+U)_{\delta}(w)$. Therefore $\delta\sigma_v$ is a pebbling sequence under $P+U$ and  $(P+U)_{\sigma}(v)=(P+U)_{\delta\sigma_v}(v)=\reach(P,v)+\reach(U,v)+1$. $\sigma_v$ must use a pebble of $P$ to do this, otherwise $\delta\sigma_v$ is executable under $U$ which is a contradiction. The trajectory of $\sigma_v$ is connected, therefore there is a vertex which is double covered, furthermore each vertex in this trajectory is connected by a coopexcess path to $v$, so we are done.

If $w$ is not reachable under $U$, then $(P+U)_{\sigma_u\sigma_w}(w)\leq \reach(P,w)$. Thus, there is a minimal pebbling sequence $\delta$ which is executable  under $P$ and $P_\delta(w)=\reach(P+U,w)=\reach(P,w)$. Clearly $\delta\sigma_{v}$ is not executable under $P$ or $(P+U)_{\delta\sigma_v}(v)<(P+U)_{\sigma}(v)$. Both cases require that $\delta$ removes a pebble from a vertex contained in $\mathcal{V}$.

Let $\mathcal{X}\subseteq\mathcal{V}$ be the set of vertices from which $\delta$ removes a pebble. $\delta$ is executable under $P$ so these vertices are $2$-reachable under $P$. Consider the trajectory of $\delta$. If any vertex $x$ from $\mathcal{X}$ is connected in the trajectory with a vertex $y$ contained in $\mathcal{U}$ without pass-through $w$, then each vertex in such a connecting path is 2-reachable under $P$, therefore it is cooperation free or has cooperation excess. So there is either an other cooperation free vertex connected by a coopexcess path to $v$, or there is a coopexcess path between $v$ and $y$ which is connected to $u$ by a path in the trajectory of $\sigma$ which does not contain $w$, so that path has to contain a double covered or a cooperation free vertex, which is not $w$.

The remaining case is when $w$ separates all elements of $\mathcal{X}$ from $\mathcal{U}$ in the trajectory of $\delta$.

Let $\delta_{uw}$ be a maximal subset of $\delta$ which is executable without using the pebbles placed at $\mathcal{X}$, and let $\delta_{v}$ be the remaining subsequence. $\delta_{uw}\sigma_v$ is not executable under $P+U$ or $(P+U)_{\delta_{uw}\sigma_v}(v)<(P+U)_{\sigma}(v)=(P+U)_{\sigma_u\sigma_w\sigma_v}(v)$. Therefore $\sigma_u\sigma_w$ moves more pebbles to $w$ than $\delta_{uw}$, but $\delta_v$ is executable under $P_{\sigma_u\sigma_w}$, thus $P_{\sigma_u\sigma_w\delta_v}(w)>P_\delta(w)$, therefore $w$ has cooperation excess.

To prove the second claim, consider the paths we have found. If %ezt visszaírtam, szerintem kell ez ide
they were part of the trajectory of $\sigma$, then all of them are 2-reachable under $P+U$, so their $M$ value can not be higher than $M(v)$. Otherwise, the path consists of vertices from the trajectory of $\sigma$ and some others whose $M$ value is zero, since they are 2-reachable under $P$. 
\end{proof}

The following claim is a trivial consequence of the definitions.
\begin{claim}
If $u$ contains at least two pebbles and it is double covered, then one of its neighbors is also double covered.
\end{claim}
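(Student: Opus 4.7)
The plan is to exploit the disjointness of $P$ and $U$ together with the definition of double coverage. Since $P$ and $U$ share no occupied vertex, the assumption $(P+U)(u)\geq 2$ forces either $P(u)\geq 2$ or $U(u)\geq 2$; the two cases are symmetric, so I will carry out the argument for $P(u)\geq 2$ and note that the other follows by swapping the roles of $P$ and $U$.

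Assume $P(u)\geq 2$. Every neighbor $w$ of $u$ is then reachable under $P$, because the single pebbling move $u\to w$ is executable and places one pebble on $w$. So it suffices to produce one specific neighbor that is additionally reachable under $U$. By hypothesis $u$ is double covered, hence reachable under $U$, so there exists a pebbling sequence $\sigma$ executable under $U$ with $U_\sigma(u)\geq 1$. Since $P$ and $U$ are disjoint and $P(u)\geq 2$, we have $U(u)=0$, so $\sigma$ is nonempty and contains a last move that actually deposits a pebble onto $u$. That move originates at some neighbor $w$, and immediately before its execution $w$ holds at least two pebbles under the distribution obtained from $U$ by a prefix of $\sigma$. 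Thus $w$ is $2$-reachable (and in particular reachable) under $U$, and combined with the earlier observation $w$ is double covered.

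The only care needed is with the interpretation of ``contains at least two pebbles'': disjointness immediately disambiguates it as ``$P(u)\geq 2$ or $U(u)\geq 2$''. There is no serious obstacle here — neither the No-Cycle lemma nor the $M$-value machinery is required — so the claim really is a one-line observation that whenever a vertex has an abundance of pebbles from one distribution and a reaching sequence from the other, the neighbor used by the final move of that sequence witnesses double coverage. It functions as an elementary building block for the more delicate arguments that follow.
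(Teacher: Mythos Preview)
Your argument is correct. The paper does not actually write out a proof of this claim --- it just states ``The following claim is a trivial consequence of the definitions'' --- so there is nothing to compare against beyond confirming that your elaboration is sound, which it is.

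One contextual remark: in the paper's standing setup, $u$ is by convention the vertex supporting the unit $U$, so $U(u)>0$ and hence $P(u)=0$ by disjointness. The sentence immediately following the claim makes this explicit: ``we assume that $u$ contains at least two pebbles, \textit{i.e.} $|U|\geq 2$''. Thus only your second case $U(u)\geq 2$ actually occurs; the case $P(u)\geq 2$ that you wrote out in detail is vacuous here. This is harmless since the two cases are genuinely symmetric, but it means your disambiguation paragraph is addressing a non-issue in context.
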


In the rest of the section we assume, that $u$ contains at least two pebbles, \textit{i. e.} $|U|\geq 2$. Therefore we can use the previous claim.
\begin{lemma}
Assume that $U$ contains at least two pebbles. Then each double covered vertex $d$ is connected by a coopexcess path to an other double covered vertex or a cooperation free vertex. Furthermore, each vertex of this coopexcess path is $2$-reachable under $U$. 
\label{VanDCvagyures} 
\end{lemma}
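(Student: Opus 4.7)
The plan is to read the coopexcess path off the trajectory of a minimal $U$-only pebbling sequence $\sigma$ that delivers one pebble to $d$ (such a $\sigma$ exists because $d$ is reachable under $U$, and by the corollary of the No-Cycle Lemma its trajectory $T(\sigma)$ is acyclic with $u$ as the unique source and $d$ as the unique sink). Fix any directed path $u=w_0,w_1,\dots,w_l=d$ inside $T(\sigma)$. Each $w_i$ with $i<l$ has positive outdegree in $T(\sigma)$, so at some stage of $\sigma$ it carried at least two pebbles provided by $U$ alone; hence $w_0,\dots,w_{l-1}$ are all $2$-reachable under $U$.

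Next I would walk back from $d$ and find the first ``target'' vertex. None of the $w_i$ with $i<l$ is a cooperation vertex because each is reachable under $U$. I claim that $w_0=u$ is always a target (double covered or cooperation free): if $u$ is reachable under $P$ then $u$ is double covered, and if it is not, a short weight argument --- using that $f_u(Q):=\sum_v Q(v)\,2^{-\dist(v,u)}$ is nonincreasing under every pebbling move --- shows $\reach(P+U,u)=|U|$, whence $u$ has no cooperation excess and is cooperation free. Set
\[
j^{\ast}=\max\{\,i:0\le i<l,\ w_i\text{ is double covered or cooperation free}\,\},
\]
which is well defined because $w_0$ qualifies. For $j^{\ast}<i<l$, the vertex $w_i$ is not a cooperation vertex, not double covered, and not cooperation free, so by definition it carries cooperation excess. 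Consequently the reversed subpath $d=w_l,w_{l-1},\dots,w_{j^{\ast}}$ is a coopexcess path whose inner vertices all have cooperation excess and are $2$-reachable under $U$, and whose terminal vertex $w_{j^{\ast}}$ is double covered or cooperation free. The degenerate case $d=u$ is handled by the claim immediately preceding the lemma: $d$ carries $|U|\ge 2$ pebbles and is double covered, so some neighbor $d'$ is double covered as well, and the length-$1$ path $d,d'$ has vacuous inner-vertex condition and is the required coopexcess path.

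The main obstacle I anticipate is the verification that $u$ is cooperation free whenever it is not $P$-reachable. The weight argument sketched above gives $\reach(P+U,u)\le\lfloor |U|+f_u(P)\rfloor=|U|$ as soon as $f_u(P)<1$; the remaining subcase $f_u(P)\ge 1$ together with $\reach(P,u)=0$ should be handled by applying the No-Cycle Lemma to an optimal $P+U$-sequence ending at $u$ and isolating the moves that only touch $P$-originated pebbles, which --- after minor rearrangement --- would contradict $\reach(P,u)=0$. Everything else in the proof is essentially a bookkeeping exercise on the acyclic trajectory of $\sigma$.
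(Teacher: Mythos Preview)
Your approach is essentially the same as the paper's: both walk from $d$ back toward $u$ along a $U$-only pebbling trajectory, observe that every vertex with positive outdegree on that trajectory is $2$-reachable under $U$ (hence not a cooperation vertex), and then stop at the first vertex that is either cooperation free or double covered. The paper phrases the dichotomy slightly differently---it asks whether some vertex on the path lacks cooperation excess, and if not concludes that $u$ itself has cooperation excess and is therefore double covered---but the underlying argument is identical.

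The ``main obstacle'' you anticipate is not an obstacle: it is exactly Claim~\ref{Udouble}, already proved in the paper. That claim says that if $u$ has cooperation excess then $u$ is double covered; contrapositively, if $u$ is not reachable under $P$ (so not double covered) then $u$ has no cooperation excess, and since $u$ is trivially reachable under $U$ it is not a cooperation vertex either, so $u$ is cooperation free. Your weight-function argument for the subcase $f_u(P)<1$ and the No-Cycle separation sketch for $f_u(P)\ge 1$ would work, but they simply reprove Claim~\ref{Udouble}. Replace that paragraph with a one-line citation and your write-up matches the paper's proof almost verbatim.
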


\begin{proof}
The previous claim handles the case when $d$ is $u$, since the neighbor is connected to $d=u$. So assume that $d\neq u$.

Since $d$ is  double covered, it is reachable from $U$, so it  is connected to $u$ by a path, whose vertices are $2$-reachable under $U$. Therefore these vertices can not be cooperation vertices. If there is a vertex on this path which does not have cooperation excess, then the vertex closest  to $d$  satisfies the conditions of the second  type. Otherwise, $u$ has cooperation excess which means that it is double covered. 
\end{proof}

We are getting closer to establish a connection between the number of
cooperation vertices and cooperation excess.
\begin{df} We call a subset $\mathcal{Q}$ of $V(G)$  a \emph{$C$-block}, if 
\begin{enumerate}[(1)]
	\item each pair of vertices in $\mathcal{Q}$ is connected by a coopexcess path,
	\item  it contains a vertex having cooperation excess 
\end{enumerate}
and it is maximal with these properties.
\end{df}

Notice that the intersection of two $C$-blocks cannot contain a vertex having cooperation excess.

\begin{lemma}
Each $C$-block either
\begin{enumerate}[(1)]
\setcounter{enumi}{2}
\item\label{cond1} contains at least two double covered vertices, or
\item\label{cond2} contains one double covered vertex and one cooperation free vertex, or
\item\label{cond3} contains at least two cooperation free vertices.
\end{enumerate}
\label{slushlemma}
\end{lemma}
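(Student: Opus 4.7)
The plan is to pick an arbitrary cooperation excess vertex $v$ of $\mathcal{Q}$ (one exists by the definition of a $C$-block) and produce a second ``special'' vertex (either double covered or cooperation free) that also lies in $\mathcal{Q}$; since $v$ itself may be neither DC nor CF, I will need to exhibit the second vertex carefully. The central technical ingredient is a \emph{maximality tool}: if $w$ is joined to some cooperation excess vertex $v' \in \mathcal{Q}$ by a coopexcess path $\alpha$, then $w \in \mathcal{Q}$. Indeed, for any $u \in \mathcal{Q}$ the definition of $\mathcal{Q}$ yields a coopexcess $u$-$v'$ path $\beta$; in $V(\alpha) \cup V(\beta)$ every vertex other than possibly $u$ and $w$ has cooperation excess (either an inner vertex of $\alpha$ or $\beta$, or $v'$ itself), so any simple $u$-$w$ path inside this union is a coopexcess path. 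Maximality of $\mathcal{Q}$ then forces $w \in \mathcal{Q}$.

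The second ingredient is a converse to Claim \ref{Udouble}: every double covered vertex has cooperation excess. Since $P$ and $U$ are supported on disjoint vertex sets, a pebbling sequence executable under $P$ remains executable under $P+U$, and likewise for $U$; running such sequences consecutively yields $\reach(P+U,v) \geq \reach(P,v) + \reach(U,v)$. When $v$ is double covered, both reach values are at least $1$, so $\exc(P+U,v) - \exc(P,v) - \exc(U,v) \geq 1$. This is what lets a double covered vertex serve as a pivot in the maximality tool.

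With these tools the proof is a short case split on whether $v$ is double covered. If $v$ is DC, Lemma \ref{VanDCvagyures} provides a coopexcess path from $v$ to another DC or CF vertex $w$, and the maximality tool places $w \in \mathcal{Q}$, yielding (\ref{cond1}) or (\ref{cond2}). If $v$ is not DC, Lemma \ref{Vanjoszomszed} gives either a coopexcess path from $v$ to a DC vertex $d$, or coopexcess paths from $v$ to two distinct CF vertices $w_1, w_2$. In the second case both $w_i$ lie in $\mathcal{Q}$ by the maximality tool pivoted at $v$, giving (\ref{cond3}). In the first case the maximality tool at $v$ places $d \in \mathcal{Q}$; since $d$ is DC it has cooperation excess by the second ingredient, so applying Lemma \ref{VanDCvagyures} to $d$ and the maximality tool now pivoted at $d$ produces a second special vertex $w \in \mathcal{Q}$, giving (\ref{cond1}) or (\ref{cond2}). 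The main obstacle is the DC-implies-cooperation-excess observation: without it the pivot $d$ in the last subcase could fail to have cooperation excess and the extension of $\mathcal{Q}$ to include $w$ would break down.
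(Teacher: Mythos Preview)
Your proof is correct and takes essentially the same approach as the paper: both combine Lemma~\ref{Vanjoszomszed} and Lemma~\ref{VanDCvagyures} with the maximality of $C$-blocks, differing only in whether the case split is on the chosen $v$ being double covered or on $\mathcal{Q}$ containing any double covered vertex. Your explicit observation that every double covered vertex has cooperation excess is used tacitly in the paper's argument (it is what makes a double covered $d\in\mathcal{Q}$ a valid pivot when concatenating coopexcess paths), so isolating it as a separate ingredient is a genuine gain in clarity.
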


%%%% PRĂłbĂĄltam az elĹzĹ lemmĂĄt rĂĄhĂşzni, de ott ugye M ĂŠrtĂŠkekrĹl is szĂłvan itt pedig azokrĂłl semmit sem tudunk mondani.
\begin{proof}
Consider an arbitrary element $v$ of $\mathcal{Q}$ which having cooperation excess. 
If the $C$-block does not have a double covered vertex, then Lemma \ref{Vanjoszomszed} guarantees that two cooperation free vertices are connected to $v$ by a coopexcess path, which means that they are contained in $\mathcal{Q}$, so (\ref{cond3}) is satisfied. 

Otherwise $\mathcal{Q}$ contains a double covered vertex.
According to lemma \ref{VanDCvagyures}, either there is an other double covered vertex in $\mathcal{Q}$, or a cooperation free vertex. Thus either (\ref{cond1}) or (\ref{cond2}) is satisfied.  
\end{proof}

Later we generalize the notion of $C$-blocks, so that we keep the properties of \ref{slushlemma}. The following statement will be useful for this.

\begin{lemma}
If a vertex $v$ having cooperation excess is adjacent to a cooperation vertex $c$ such that $M(v)<M(c)$ , then there are vertices $e$ and $f$, such that each of them is either double covered or cooperation free and they are connected to $v$ by coopexcess paths containing only vertices whose $M$ values are smaller than $M(c)$.
\label{coopszomszed}
\end{lemma}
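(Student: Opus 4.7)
The plan is to follow the template of Lemma \ref{Vanjoszomszed} but anchored to a pebbling sequence that witnesses $M(v)$, so that its trajectory only involves cooperation vertices with $M$-value below $M(c)$. Specifically, let $\sigma$ be a minimal pebbling sequence with $(P+U)_\sigma(v)\ge 2$ utilizing exactly $M(v)$ cooperation vertices; by the No-Cycle lemma I may assume its trajectory is acyclic with $v$ as the unique sink. Since $M(v)<M(c)$, the cooperation vertex $c$ is not utilized by $\sigma$, and any cooperation vertex $c'$ utilized by $\sigma$ that is 2-reachable within the trajectory satisfies $M(c')\le M(v)<M(c)$ by truncating $\sigma$ at $c'$. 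Hence any coopexcess path I extract from this trajectory automatically has intermediate $M$-values below $M(c)$.

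With $\sigma$ fixed, I would branch on whether $v$ is itself double covered. If yes, take $e=v$ and apply Lemma \ref{VanDCvagyures} to obtain a second good vertex $f$ along a coopexcess path whose inner vertices are 2-reachable under $U$; since cooperation vertices are not reachable under $U$, none of these inner vertices is a cooperation vertex, so each has $M$-value $0<M(c)$. If $v$ is not double covered, I repeat the argument of Lemma \ref{Vanjoszomszed} applied to the trajectory of the $\sigma$ above. In the two-cooperation-free branch, $e$ and $f$ are produced immediately and the $M$-value bound follows from the first paragraph. In the single-double-covered branch, a double covered vertex $d$ is found on a coopexcess path from $v$, and I set $e=d$; then I apply Lemma \ref{VanDCvagyures} to $d$ to produce a second good vertex $f$ along a coopexcess path whose inner vertices are 2-reachable under $U$ (hence not cooperation vertices, so $M$-value $0<M(c)$), and I concatenate through $d$ to obtain a coopexcess path from $v$ to $f$.

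The main obstacle is the concatenation in the single-double-covered case: the combined path $v\!-\!d\!-\!f$ qualifies as a coopexcess path only if $d$ itself has cooperation excess. I would address this by revisiting the structure of $\sigma$: because $\sigma$ routes pebbles from $P$ through $d$ and also relies on the pebbles of $U$ (otherwise $v$ would not have cooperation excess with $M(v)<M(c)$), the pebbles arriving at $v$ force $\mathrm{reach}(P+U,d)>\mathrm{reach}(P,d)+\mathrm{reach}(U,d)$, giving $d$ cooperation excess. If this trajectory argument fails in a corner case, my fallback is to replace $f$ by the neighbor of $d$ along the Lemma \ref{VanDCvagyures} path: that neighbor has cooperation excess by construction and, being 2-reachable under $U$, is also reachable under $U$, so the only way it is \emph{not} double covered (hence good as cooperation free is impossible for a cooperation-excess vertex) would force a further case analysis using Claim \ref{twotypeofcoop}.

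A secondary technicality is that Lemma \ref{Vanjoszomszed}'s $M$-value conclusion is only stated when $v$ is not 2-reachable under either $P$ or $U$ separately. When $v$ \emph{is} 2-reachable under one of them, I would argue directly that $M(v)=0$: a minimal pebbling sequence using only $P$'s (or only $U$'s) pebbles utilizes no cooperation vertex, since any utilized vertex would then be reachable under that distribution and so not a cooperation vertex. An analogous observation shows that every vertex appearing on the coopexcess paths I construct in this case is itself 2-reachable under that same distribution, giving it $M$-value $0<M(c)$, which completes the bound.
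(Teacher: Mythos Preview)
Your plan is essentially the paper's own argument: apply Lemma~\ref{Vanjoszomszed} to $v$, take the two cooperation-free vertices if that branch fires, and otherwise chain the double-covered vertex $d$ through Lemma~\ref{VanDCvagyures} to pick up a second good endpoint, with the $M$-bound on the second path coming from the fact that its inner vertices are $2$-reachable under $U$ (hence have $M$-value~$0$).

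Two comments on the differences. First, your ``secondary technicality'' is a non-issue, and the paper dispatches it in one line: since $v$ is adjacent to the cooperation vertex $c$, if $v$ were $2$-reachable under $P$ (or under $U$) then $c$ would be reachable under $P$ (resp.\ $U$), contradicting that $c$ is a cooperation vertex. Hence the hypothesis of the second clause of Lemma~\ref{Vanjoszomszed} is automatically satisfied, and you can simply cite that lemma rather than rerun its proof on a hand-picked $\sigma$. This observation also makes your separate branch ``$v$ double covered'' fit cleanly, since Lemma~\ref{Vanjoszomszed} already assumes $v$ is not double covered.

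Second, the ``main obstacle'' you raise---that the concatenated $v$--$d$--$f$ walk is a coopexcess path only if $d$ itself has cooperation excess---is a genuine point that the paper's proof simply glosses over (it asserts ``the concatenation of these two coopexcess paths fulfills the criteria'' without checking that $d$ is an admissible inner vertex). Your proposed fixes are in the right spirit but not airtight: the inequality $\reach(P+U,d)>\reach(P,d)+\reach(U,d)$ does not follow just from $d$ lying on the trajectory, and your fallback via the neighbour of $d$ lands you on a vertex with cooperation excess, which is neither of the two desired types. So you have correctly located a soft spot; a clean resolution would require a slightly sharper reading of how $d$ arises inside the proof of Lemma~\ref{Vanjoszomszed} (every trajectory vertex there is either cooperation free or has cooperation excess, and the double-covered $d$ found is in the trajectory).
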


\begin{proof}
Vertex $v$ has a cooperation vertex neighbor, therefore $v$ is not 2-reachable under $P$ or $U$. According to Lemma \ref{Vanjoszomszed} there is a double covered vertex or there are two cooperation free vertices who are connected to $v$ by a coopexcess path containing only vertices whose $M$ values are at most $M(v)<M(c)$. In the latter case we are done. Since the double covered vertex is connected to an other double covered or cooperation free by a coopexcess path containing vertices whose M value is zero, according to Lemma \ref{VanDCvagyures}. The concatenation of these two coopexcess paths fulfills the criteria.
\end{proof}

\section{Connection between total cooperation excess, number of cooperation vertices and maximum degree}
\label{sec4}
In this section we  prove a crucial lemma. Unfortunately, the proof requires quite a lot of effort, including many small claims. 

\begin{lemma}
Let $P$ be an arbitrary pebble distribution on $G$ and $U$ be a unit having at least two pebbles, such that $P$ does not contain a pebble at $u$. Now we have
$$\coop(P,U)-\DC(P,U)\leq (\Delta-2)\CE(P,U).$$ 
\label{vertex-excess} 
\end{lemma}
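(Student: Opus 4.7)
The plan is to decompose the cooperation structure into $C$-blocks and prove a local version of the inequality on each block; summing these local bounds then yields the global one. All the needed structural tools have been set up already: each cooperation vertex has a neighbor with cooperation excess, the characterization of $C$-blocks by Lemma \ref{slushlemma}, and the connections guaranteed by Lemmas \ref{Vanjoszomszed} and \ref{VanDCvagyures}. The key point that makes the decomposition well-defined is the observation (made right after the definition of $C$-block) that two $C$-blocks cannot share a vertex having cooperation excess, so cooperation excess partitions cleanly among the blocks.

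The local target is the following. For each $C$-block $\mathcal{Q}$, assign each cooperation vertex $c$ to the block containing a chosen cooperation-excess neighbor of $c$, and assign each double covered vertex to one block containing it. With $C(\mathcal{Q})$, $D(\mathcal{Q})$, and $CE(\mathcal{Q})$ denoting these restricted quantities, I want to show
$$C(\mathcal{Q}) - D(\mathcal{Q}) \leq (\Delta - 2)\,CE(\mathcal{Q}).$$
The factor $\Delta - 2$ should arise from degree counting: each cooperation-excess vertex of $\mathcal{Q}$ has at most $\Delta$ neighbors, but Lemma \ref{slushlemma} forces at least two degree slots (on average) to be consumed by the intra-block structure — either by terminal DC / cooperation free vertices or by neighboring cooperation-excess vertices along the coopexcess paths. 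The remaining slots are what can be occupied by the external cooperation vertices charged to $\mathcal{Q}$.

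My approach to the local inequality is induction on the $M$-values of the cooperation vertices in the block. Peel off a cooperation vertex $c$ with maximum $M(c)$. Claim \ref{3cexess} gives either two neighbors $e,f$ of $c$ with $M(e), M(f) < M(c)$ and cooperation excess at least one, or a single neighbor $d$ with $M(d) < M(c)$ and cooperation excess at least three. In either case $c$ can be charged against two units of cooperation excess strictly lower in the $M$-order; Claim \ref{3asszomszeda} together with Lemma \ref{coopszomszed} ensures that after removing $c$ and decrementing the used excess, the residue is again a valid configuration of $C$-blocks with terminals, so the inductive hypothesis applies. Once the local bound is established, summing over all blocks gives $\sum_\mathcal{Q} CE(\mathcal{Q}) = \CE(P,U)$, $\sum_\mathcal{Q} C(\mathcal{Q}) = \coop(P,U)$, and $\sum_\mathcal{Q} D(\mathcal{Q}) \leq \DC(P,U)$, yielding the lemma.

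The hard part is the local inequality itself. Cooperation excess can be concentrated on very few vertices, several cooperation vertices may share supporting cooperation-excess neighbors, and the distinction between DC terminals (which earn a credit through $-D(\mathcal{Q})$) and cooperation free terminals (which only consume a degree slot without contributing a credit) has to be tracked carefully. Ensuring that the induction never double-spends excess, that terminals are not over-counted when DC vertices are assigned uniquely to blocks, and that the generalized $C$-block structure after peeling still satisfies the trichotomy of Lemma \ref{slushlemma}, are exactly the technical difficulties the authors signal — and presumably the reason the proof needs the preparatory claims of Section \ref{sec3} plus the additional work of Section \ref{sec4}.
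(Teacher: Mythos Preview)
Your overall strategy---decompose into blocks, prove a local inequality by a degree-counting argument, sum---is exactly the paper's. The paper also reduces via the two alternatives of Claim~\ref{3cexess}, peeling away saturated vertices (those that are simultaneously cooperation vertices and have cooperation excess) in decreasing order of $M$-value. So at the architectural level you have recovered the proof.

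The genuine gap is the sentence ``after removing $c$ and decrementing the used excess, the residue is again a valid configuration of $C$-blocks with terminals, so the inductive hypothesis applies.'' Once you decrement cooperation excess at a vertex, the resulting labels no longer arise from any pebble distribution, so none of the structural lemmas (\ref{Vanjoszomszed}, \ref{VanDCvagyures}, \ref{slushlemma}, \ref{coopszomszed}) can be invoked on the residue. You anticipate this in your last paragraph, but it is the whole difficulty: the induction hypothesis is stated for pebbling configurations, and the peeling step leaves that category. The paper's fix is to abandon pebbling altogether after the first step: it encodes the four relevant quantities (cooperation excess, cooperation-vertex flag, double-covered flag, $M$-value) as abstract vertex labels on an auxiliary graph $A_0\cong G$, rephrases (\ref{prop1})--(\ref{prop4}) as purely combinatorial properties of labelled graphs, and then designs explicit graph transformations (splitting a saturated vertex into two or three labelled copies) that provably preserve (\ref{prop1})--(\ref{prop4}) and the three coordinate sums. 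Claim~\ref{oroklodes}, the verification that these invariants survive each transformation, is the technical heart and takes the bulk of Section~\ref{sec4}. Only after all saturated vertices are gone does the clean degree count (your ``$\Delta-2$ from degree slots'' argument, their Lemma~\ref{izelemma}) go through.

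Two further points your sketch omits: the cases $\Delta\le 2$ are handled separately by direct inspection (and are needed, since the main argument gives nothing when $\Delta-2\le 0$), and $\Delta=3$ requires a third transformation because the vertex-tripling in Transformation~1 can exceed degree~$3$.
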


This lemma gives a connection between the total cooperation, the total
number of double covered vertices and total cooperation excess. The
proof would be relatively easy if the effect of a pebble would appear
close to the location of the pebble. The example on Figure \ref{farcoop}. shows, that
unfortunately this is not always true. 

Another difficulty arises from the fact that a cooperation vertex can have cooperation excess. For such an example see Figure \ref{coopvcoope}. To prove Lemma \ref{vertex-excess} we get rid of such vertices one by one.

\begin{figure}
\centering
\input{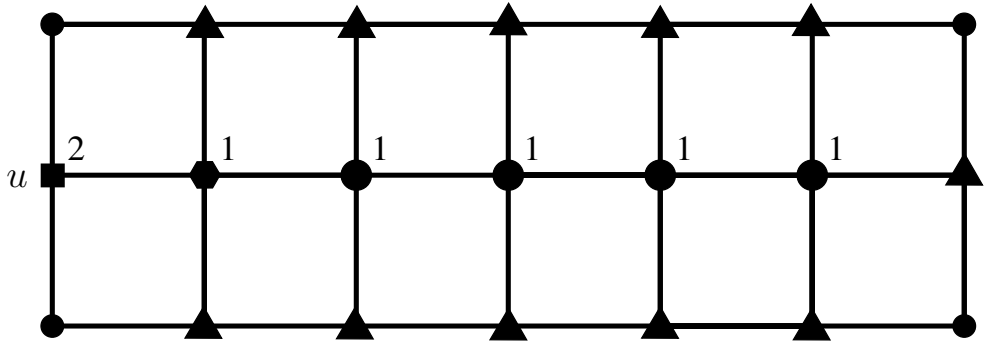}
\caption{The triangles are cooperation vertices. Notice that they can be far from the added unit.}
\label{farcoop}
\end{figure}
 %\textbf{Ide kene a pelda}.

\begin{figure}
\centering
\input{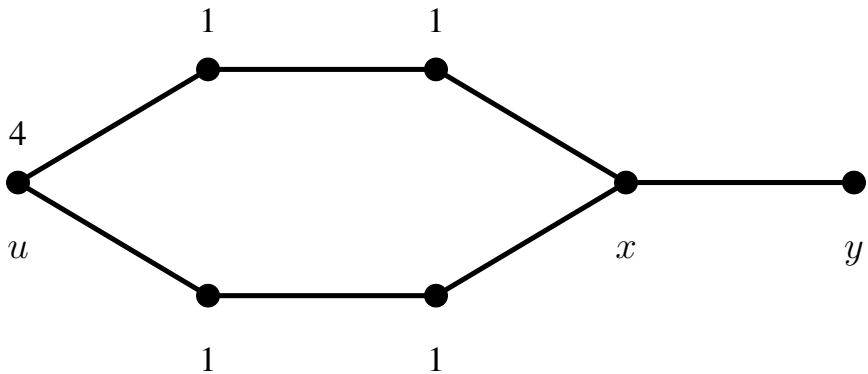}
\caption{Both $x$ and $y$ are cooperation vertices, furthermore $x$ has cooperation excess.}
\label{coopvcoope}
\end{figure}

So to prove the lemma we will change the graph in several steps. In
the new graph it will be easier to isolate these effects.

We introduce a sequence of auxiliary graphs $A_0, A_1,\ldots, A_k$, whose
vertices are labeled with vectors of four coordinates. The first and fourth coordinate is
always an integer, while the other coordinates are binary. We denote
the vertices of these graphs with underlined letters and the $i$th
coordinate of vertex $\underline{b}$ with $b_i$.  We encode the
parameters of the investigated pebbling problem in the auxiliary graph
and in the coordinates in the following way:

$A_0$ is isomorphic to $G$. %as a graph
The first coordinate of each vector is the amount of the cooperation
excess of the corresponding vertex. The second coordinate is 1 iff the
corresponding vertex is a cooperation vertex. The third coordinate is 1
when the vertex is double covered. Finally, the last coordinate is
$M(v)$, i.e. the minimum number of cooperation vertices have to be
utilized by a pebbling sequence to obtain 2 pebbles at $v$, where $v$ is the corresponding vertex. So $A_0$
is representation of the original configuration, the labels give the
values of the various quantities that we are interested in. An example can be seen on Fig. \ref{aux}.

The other graphs in the sequence $A_1,\ldots, A_k$ will be obtained
from $A_0$ by applying certain operations recursively, until we
finally obtain $A_k$ with some useful properties. It is
important to note that although the labels of $A_0$ are obtained from
the pebble distribution on $G$, this will not be true any more for
the other auxiliary graphs. We are not trying to change the graph and
the pebble distribution and then obtain the new labels from
these. We just apply the transformation on the abstract, labeled
graphs.

Now we translate the properties of the pebble distribution to properties of $A_0$. 

\begin{figure}
\scalebox{0.9}{\input{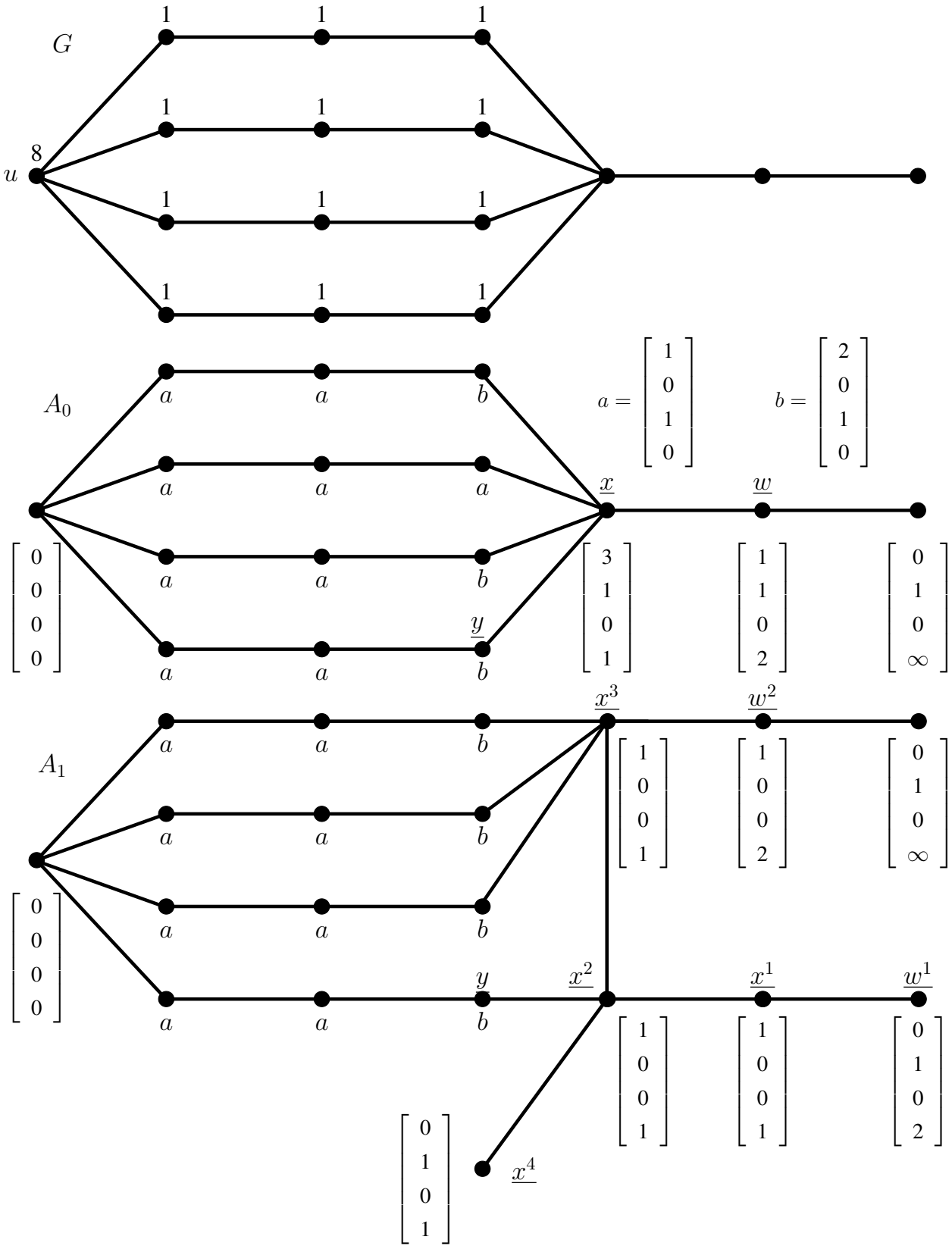}}
\caption{An example graph $G$ with a pebble distribution $P$, a unit $U$ and the corresponding auxiliary graph $A_0$. $A_1$ is obtained from $A_0$ by using the first transformation. Note that $A_1$ does not contain a saturated vertex.}
\label{aux}
\end{figure}

\begin{df}
  We call a path $\mathcal{P}$ in $A$ an \emph{$A$-path}, if each
  inner vertex $\underline{b}$ of $\mathcal{P}$ satisfies $b_1>0$. We
  say that $\mathcal{B}$ is an \emph{$A$-block} iff
\begin{enumerate}[(1)]
\setcounter{enumi}{5}
\item there is a vertex $\underline{b}\in \mathcal{B}$ such that
  $b_1>0$,
\item if $\underline{a},\underline{b}\in \mathcal{B}$, then there is
  an $A$-path which connects them
%\item $\mathcal{B}$ can not be expanded.
\end{enumerate}  
and $\mathcal{B}$ is maximal to these properties.
\end{df}
Note that the concept of $A$-path and $A$-block are generalizations of coopexcess path and $C$-block, respectively. In this language, the statement of Lemma \ref{vertex-excess} can be formulated as:

 $$\sum_{\underline{a}\in A}a_2-\sum_{\underline{a}\in A}a_3
\leq\sum_{\underline{a}\in A}a_1(\Delta-2) $$

We state four properties of $A_0$ which will be inherited to later auxiliary graphs. The significant properties are the first and the last. The other two are technical ones which will help the proof of inheritance stated in Claim \ref{oroklodes}.  

\begin{claim}\label{claim4.2}
The following statements hold for $A_0$:
\begin{enumerate}[(1)]
\setcounter{enumi}{7}
\item\label{prop1} If $c_1 c_2>0$ then one of the following two cases hold:
\begin{enumerate}[a)]
\item
 there exist a $\underline{d}$ which is adjacent to $\underline{c}$, $d_1\geq 3$ and $d_4<c_4$, or
\item there are vertices $\underline{e}$ and $\underline{f}$ such that they are neighbors of $\underline{c}$, $e_1$ and $f_1$ are both positive, $e_4<c_4$ and $f_4<c_4$.
\end{enumerate}
\item\label{prop2} If $a_1\geq3$ and $\underline{a}$ has a neighbor $\underline{c}$ such that $c_2=1$, then there is a $\underline{b}$, which is adjacent to $\underline{a}$ and $a_4\geq b_4$. %Moreover, if $a_2=1$, then there is a $\underline{b}$ such that $a_4>b_4$. 
\item\label{prop3} Let $\underline{c}$ be a vertex whose first and second coordinates are both positive. If $\underline{a}$ is a neighbor of $\underline{c}$ such that $a_1>0$ and $a_4<c_4$, then there are vertices $\underline{e}$ and $\underline{f}$ such that each of them is connected to $\underline{a}$ by $A$-paths containing only vertices having their fourth coordinate  smaller than $c_4$, and their third coordinate is either $1$ or the first and second coordinates of them are $0$.

%%%%% EZT probaltam kiszedni, de kell, mivel az elďż˝zďż˝bďż˝l nem levezethetďż˝ ha nincs olyan csďż˝cs blockban aki egyszerre coop excessses ďż˝s coop csďż˝cs is.
%\begin{comment}
\item \label{prop4}
Each $A$-block contains either
\begin{enumerate}[a)]
\item two vertices with third coordinate $1$, or
\item two vertices with first and second coordinates $0$, or
\item one vertex with third coordinate $1$ and one vertex with first and second coordinates $0$.
\end{enumerate}
% \end{comment}

\end{enumerate}

\label{Aux_prop}
\end{claim}
This claim is equivalent to the following, previously proven, statements with the new notation: Claim \ref{3cexess} $\rightarrow$  (\ref{prop1}), Claim   \ref{3asszomszeda}  $\rightarrow$ (\ref{prop2}), 
Lemma \ref{coopszomszed}  $\rightarrow$ (\ref{prop3}) and Lemma \ref{slushlemma}  $\rightarrow$ (\ref{prop4}).

We will obtain $A_i$ from $A_{i-1}$ by applying one of two
transformations. Then we repeat this until it is possible to apply at
least one of the transformations. Both transformations will preserve
$\sum_{\underline{a}\in A}a_i$, $(i\in \{1,2,3\})$, the fourth
coordinate of each vertex and $\Delta$, the maximum degree in the
graph. The objective of the transformations is to replace vertices
satisfying $a_1 a_2>0$ (i.e. it has cooperation excess and it is a
cooperation vertex) with (one ore more) vertices satisfying $b_1
b_2=0$. From this point, we call these vertices \emph{saturated}
vertices. Both transformations will increase the number of vertices in
the auxiliary graph.

Let $\underline{w}$ be a vertex where $w_1 w_2>0$ such that its fourth
coordinate is maximal among these vertices. By
Claim~\ref{Aux_prop}~(\ref{prop1}) there are two cases.

\textbf{Case 1:} If $\underline{w}$ has a neighbor
$\underline{x}$ such that $x_1\geq 3$ and $w_4>x_4$, then we apply the
following transformation to $A_i$:

\textbf{Transformation 1}
\begin{itemize}

\item Choose a neighbor $\underline{y}$ of $\underline{x}$ such
  that its fourth coordinate is minimal among all neighbors of
  $\underline{x}$.
\item Let $R$ be the set of $\underline{x}$'s neighbors without
  $\underline{y}$ where the product of the first and the second
  coordinate is positive.
\item Delete $x$ and add three vertices $x^1$, $x^2$ and $x^3$,
  such that $x^1_1=x_1^3=1$ and $x^2_1=x_1-2$. $x^1_2=x^2_2=x^3_2=0$,
  $x^1_3=x^3_3=0$ and $x^2_3=x_3$. Connect $\underline{x}^2$ with
  $\underline{y}$, $\underline{x}^1$ and $\underline{x}^3$.
\item Delete each element $\underline{r}$ of $R$ and add two
  vertices $\underline{r}^1$ and $\underline{r}^2$ and set the
  coordinates as: $r_1^1=r^1_3=0$, $r_2^1=1$, $r^1_4=r^2_4=r_4$,
  $r^2_1=r_1$, $r^2_2=0$ and $r^2_3=r_3$. We connect $\underline{r}^1$
  to $\underline{x}^1$ and $\underline{r}^2$ to $\underline{x}^3$ and
  to each original neighbor of $\underline{r}$.
\item We connect the neighbors of $\underline{x}$ which are not included in $R\cup{\underline{y}}$ to $\underline{x}^3$. 
\item Set $x^1_4=x^2_4=x^3_4=x_4$.
\item If $x_2=1$, then  add an extra vertex $x^4$ and connect it only with $\underline{x}^2$. Set its vector to $(0,1,0,x_4)$.

\end{itemize}

\begin{figure}[htb]
\centering
\scalebox{0.75}{\input{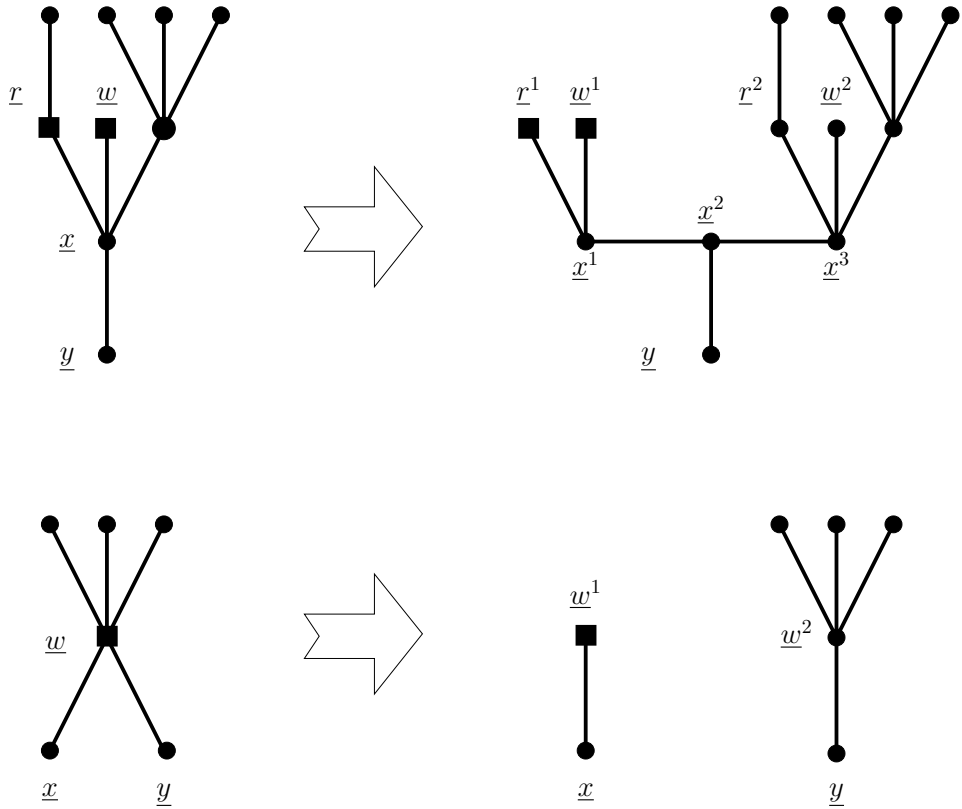}}
\caption{
%\textbf{nem jo a zold pont, nem kellene a nyilak, viszont kellene jelolni az r-eket is, meg a q-t is. } \textcolor{red}{ÖÖÖÖ, mi az a q?? 
Vertices denoted by squares are ``cooperation vertices'' so their second coordinates are one. Edges are shown between vertices contained in the same $C$-block. The upper transformation is called first, and the lower one is mentioned as the second transformation. Note that in the upper example $R=\{w,r\}$.}
\label{transformation}
\end{figure}
In other words, this transformation replaces each saturated neighbor
of $\underline{x}$ (excluding a chosen $\underline{y}$) with two
vertices such that one of them is a leaf with zero first coordinate
and the other one is act as the original vertex, but its second
coordinate is zero. To handle the increased degree of $\underline{x}$,
we triple it. Also, if $x$ is saturated then we add the additional
$\underline{x}^4$ vertex. Note that this can be done when $\Delta\geq
4$. If $\Delta\leq 3$, then we have to handle this case in a slightly
different way.

\textbf{Case 2:} If $\underline{w}$ has two neighbors such that their first coordinates are positive and their fourth coordinates are strictly less than $w_4$, then we apply the second transformation:

\textbf{Transformation 2}
\begin{itemize}
\item We choose neighbors $\underline{x}$ and $\underline{y}$ whose fourth coordinate is minimal among all neighbors and $x_4\geq y_4$.
\item We delete $\underline{w}$ and add vertices $\underline{w}^1$ and $\underline{w}^2$. We set the coordinates of these vectors as: $w^1_1=w^1_3=0$, $w^1_2=1$, $w^1_4=w^2_4=w_4$, $w^2_1=w_1$,$ w^2_2=0$ and $w^2_3=w_3$.
\item We connect $\underline{w}^1$ only with $\underline{x}$. In contrast, we connect $\underline{w}^2$ with all neighbors of $\underline{w}$ except $\underline{y}$. 

\end{itemize}

Both transformation can be seen on Figure \ref{transformation}.

\begin{claim}
Both transformations preserve $\sum_{\underline{a}\in A}\underline{a_i}$ $i\in\{1,2,3\}$, 
and $\Delta$ if $\Delta\geq 4$. 
\label{megorzodik}
\end{claim}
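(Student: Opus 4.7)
The plan is to verify each of the two asserted properties by direct local bookkeeping, since each transformation alters the graph only in a bounded neighborhood.

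For the coordinate sums ($i\in\{1,2,3\}$), I would tally the contributions of the added vertices and compare them with those of the deleted ones. In Transformation~1, $\underline{x}$ is replaced by $\underline{x}^1,\underline{x}^2,\underline{x}^3$ (plus $\underline{x}^4$ when $x_2=1$), whose first coordinates sum to $1+(x_1-2)+1=x_1$, third coordinates sum to $0+x_3+0=x_3$, and second coordinates sum to $x_2$ (either $0$ trivially, or restored to $1$ by $\underline{x}^4$). Each $\underline{r}\in R$ has $r_2=1$ by definition of $R$ and is replaced by a pair with coordinate vectors $(0,1,0,r_4)$ and $(r_1,0,r_3,r_4)$, reproducing $(r_1,r_2,r_3)$. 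Transformation~2 is analogous: $\underline{w}$ (with $w_2=1$) is replaced by vertices of coordinates $(0,1,0,w_4)$ and $(w_1,0,w_3,w_4)$, whose componentwise sums match $(w_1,w_2,w_3)$ exactly.

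For degree preservation, I would check each new vertex and each old vertex whose neighborhood is modified. In Transformation~1 the critical vertex is $\underline{x}^2$, whose neighbors are $\underline{y},\underline{x}^1,\underline{x}^3$, and (if $x_2=1$) $\underline{x}^4$, giving degree up to $4$; this is exactly where the hypothesis $\Delta\geq 4$ is invoked. The vertex $\underline{x}^1$ has degree $1+|R|\leq \deg(\underline{x})\leq\Delta$ since $R$ excludes $\underline{y}$, while $\underline{x}^3$ inherits exactly $\deg(\underline{x})$ neighbor slots (namely $\underline{x}^2$, the vertices $\underline{r}^2$ for $\underline{r}\in R$, and the untouched original neighbors outside $R\cup\{\underline{y}\}$). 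Each $\underline{r}^1$ is a leaf, each $\underline{r}^2$ has degree $\deg(\underline{r})\leq\Delta$, and every unaffected old vertex simply has its edge rerouted to the corresponding replacement. For Transformation~2, $\underline{w}^1$ is a leaf and $\underline{w}^2$ has degree $\deg(\underline{w})-1$; provided one reads the rule so that each original edge $\underline{w}\underline{z}$ is replaced by a single new edge (to $\underline{w}^1$ when $\underline{z}=\underline{x}$, to $\underline{w}^2$ when $\underline{z}\notin\{\underline{x},\underline{y}\}$, and simply deleted when $\underline{z}=\underline{y}$), the degrees of $\underline{x}$, $\underline{y}$ and the remaining neighbors are each at most their original values.

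The only real subtlety is the bookkeeping: one must account for every edge incident to $\underline{x}$, to $\underline{y}$, and to each $\underline{r}\in R$ exactly once, and confirm that the replacement scheme does not inflate any individual vertex's neighborhood. Given the explicit edge rules in the transformations, this is routine but tedious, and no deeper argument is required beyond careful case enumeration.
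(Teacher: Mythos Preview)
Your proposal is correct and matches the paper's approach: the paper states this claim without proof, treating the coordinate-sum and degree bookkeeping as a routine local verification, which is exactly what you carry out. Your caveat about reading Transformation~2 so that $\underline{x}$ is incident to only one of $\underline{w}^1,\underline{w}^2$ is well taken and is indeed what is needed for $\Delta$ to be preserved.
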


\begin{claim}
Both transformations decrease the number of saturated vertices.
\end{claim}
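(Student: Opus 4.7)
The plan is to check directly, for each of the two transformations, that the vertex $\underline{w}$ (which is saturated, since $w_1w_2>0$) is destroyed, that its replacements are non-saturated, and that no other vertex acquires the saturated status through the transformation.

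First I would handle Transformation 2, which is the easier case. Here $\underline{w}$ itself is deleted and replaced by $\underline{w}^1$ and $\underline{w}^2$ with $w^1_1=0$ and $w^2_2=0$, so $w^1_1w^1_2=w^2_1w^2_2=0$ and neither new vertex is saturated. Every other vertex of $A_{i-1}$ survives with its coordinate vector unchanged, so its saturation status is unchanged as well. Therefore the total number of saturated vertices drops by exactly one.

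Next I would treat Transformation 1. The key preliminary observation is that $\underline{w}$ actually lies in the set $R$ that gets processed. Indeed, $\underline{w}$ is a neighbor of $\underline{x}$ with $w_1w_2>0$, so the only way it could be excluded from $R$ is $\underline{w}=\underline{y}$. But $x_1\geq 3$ and $\underline{x}$ has the saturated neighbor $\underline{w}$, so Claim \ref{Aux_prop}(\ref{prop2}) yields some neighbor $\underline{b}$ of $\underline{x}$ with $b_4\leq x_4$; then $\underline{y}$, chosen to minimize the fourth coordinate among neighbors of $\underline{x}$, satisfies $y_4\leq b_4\leq x_4<w_4$, which forces $\underline{y}\neq\underline{w}$. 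Hence $\underline{w}\in R$ and is deleted.

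Now I would verify that none of the new vertices is saturated: $\underline{x}^1,\underline{x}^2,\underline{x}^3$ all have second coordinate $0$; the optional $\underline{x}^4$ has first coordinate $0$; for each $\underline{r}\in R$, the replacement $\underline{r}^1$ has first coordinate $0$ and $\underline{r}^2$ has second coordinate $0$. In every case the product of the first two coordinates is $0$. The vertices of $A_{i-1}$ not involved in the transformation keep their coordinates, so again no new saturated vertex can appear. Since $\underline{w}\in R$ is removed and possibly $\underline{x}$ as well (when $x_2=1$), the number of saturated vertices strictly decreases, in fact by at least $|R|$ (plus one if $\underline{x}$ was saturated).

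I do not expect any real obstacle here; the only subtle point worth double‑checking is the inclusion $\underline{w}\in R$ in Transformation 1, which is the argument above using Claim \ref{Aux_prop}(\ref{prop2}) and the minimality of $y_4$. Everything else is a direct inspection of the coordinate assignments in the definitions of the two transformations.
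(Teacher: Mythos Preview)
Your proof is correct; the paper states this claim without proof, and your direct verification is exactly the natural argument. The one genuinely nontrivial point---that in Transformation~1 the original saturated vertex $\underline{w}$ necessarily lands in $R$ (i.e.\ $\underline{w}\neq\underline{y}$)---is handled correctly via property~(\ref{prop2}) and the minimality of $y_4$, and this is precisely the detail that justifies the paper's unspoken claim.
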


\begin{claim}\label{claim4.5}
If the statements of Claim \ref{Aux_prop} hold for an auxiliary graph, then they hold for the new graph obtained by applying one of the above transformations.
\label{oroklodes}
\end{claim}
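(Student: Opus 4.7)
The plan is to verify each of the four properties (\ref{prop1})--(\ref{prop4}) of Claim \ref{Aux_prop} after each of the two transformations, by a case analysis on which vertices might witness a hypothesis. Two structural observations simplify the work: first, every newly introduced vertex has either first coordinate $0$ (the ``leaves'' $\underline{x}^1, \underline{x}^3, \underline{x}^4, \underline{r}^1, \underline{w}^1$) or second coordinate $0$ (the ``copies'' $\underline{x}^2, \underline{r}^2, \underline{w}^2$), so no new vertex is saturated; second, by Claim \ref{megorzodik}, the fourth coordinates of surviving vertices and the maximum degree are unchanged, and the copies $\underline{r}^2, \underline{w}^2$ inherit the first, third and fourth coordinates of the originals and are attached to the same original neighbours (except possibly $\underline{y}$), so any ``witness'' reference in the old graph transfers verbatim to the new graph after the substitutions $\underline{r}\mapsto\underline{r}^2$, $\underline{w}\mapsto\underline{w}^2$.

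For (\ref{prop1}), the hypothesis $c_1 c_2 > 0$ can only be triggered by a surviving non-transformed vertex $\underline{c}$ with $c_4 < w_4$ (since $\underline{w}$ is maximum among saturated vertices and is replaced). The old witness $\underline{d}$, resp.\ pair $\underline{e},\underline{f}$, is either untouched or renamed to $\underline{d}^2$ etc., and is still adjacent to $\underline{c}$; so (\ref{prop1}) holds in $A_{i+1}$. For (\ref{prop2}), let $\underline{a}$ satisfy $a_1 \geq 3$ and be adjacent to some $\underline{c}$ with $c_2 = 1$. If $\underline{a}$ is a surviving vertex the old witness $\underline{b}$ remains a neighbour; if $\underline{c}$ is one of the newly created cooperation vertices $\underline{r}^1, \underline{w}^1, \underline{x}^4$, a neighbour of $\underline{a}$ with $a_4\ge b_4$ is provided by the transformation itself (for instance $\underline{r}^2$ or $\underline{x}^2$ whose fourth coordinate equals the original). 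The only nontrivial new $\underline{a}$ is $\underline{x}^2$ with $x_1-2\ge 3$, whose neighbours $\underline{y},\underline{x}^1,\underline{x}^3$ (and possibly $\underline{x}^4$) all have fourth coordinate $\le x^2_4$ by the choice of $\underline{y}$ as the neighbour minimising the fourth coordinate.

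Properties (\ref{prop3}) and (\ref{prop4}) together control the $A$-path/$A$-block structure, and will be the real work. The key point is that every old $A$-path passing through $\underline{x}$ (or $\underline{w}$) can be simulated by a new $A$-path passing through the ``backbone'' $\underline{x}^3$--$\underline{x}^2$--$\underline{y}$ of Transformation~1 (or through $\underline{w}^2$ of Transformation~2), because these internal vertices have positive first coordinate; on the other hand, $\underline{x}^1, \underline{r}^1, \underline{w}^1, \underline{x}^4$ have first coordinate $0$ and hence can appear only as endpoints of $A$-paths, and in particular each of them is either a cooperation-free leaf (third coordinate $0$ and first/second coordinates satisfying the leaf condition of (\ref{prop4})b) or an extra cooperation vertex. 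Any old $A$-block $\mathcal{B}$ thus produces a new $A$-block $\mathcal{B}'$ inheriting the same double-covered vertices (they survive with the same third coordinate), so (\ref{prop4}) is preserved; (\ref{prop3}) follows by the same simulation argument, tracing the old $A$-paths from $\underline{a}$ through the backbone and observing that the fourth coordinates along the new path are all at most the maximum along the old path, hence below $c_4$.

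The main obstacle I expect is the bookkeeping in (\ref{prop3})--(\ref{prop4}): new leaves such as $\underline{r}^1$, $\underline{w}^1$, or $\underline{x}^4$ may carve an old $A$-block into several smaller blocks, and we must check that each new block still contains the required combination of a double-covered vertex and a cooperation-free vertex (or two of either). The crucial ingredient is that the transformation introduces, for every saturated neighbour $\underline{r}$ removed from $\mathcal{B}$, a paired leaf $\underline{r}^1$ which is a new cooperation vertex adjacent only to $\underline{x}^1$; and $\underline{x}^1$ itself is cooperation-free, with the right third coordinate to serve as the ``cooperation-free witness'' of (\ref{prop4})b or (\ref{prop4})c in whatever component it lands in. A careful enumeration of the components produced by removing the transformed vertices, combined with the minimality of $w_4$ among saturated vertices in the old graph (so that the internal vertices of the simulated $A$-paths indeed still satisfy the required fourth-coordinate inequalities), completes the induction.
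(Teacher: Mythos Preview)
Your plan follows the paper's strategy (verify each property separately, track witnesses through the renamings $\underline{r}\mapsto\underline{r}^2$, $\underline{w}\mapsto\underline{w}^2$), but it contains a factual error that derails the treatment of (\ref{prop4}), and it misses the key mechanism the paper uses there.

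The error: you list $\underline{x}^1,\underline{x}^3$ among the vertices with first coordinate $0$, but Transformation~1 sets $x^1_1=x^3_1=1$. Hence $\underline{x}^1$ is an \emph{inner} vertex of the new $A$-block, and your plan to use it as the ``cooperation-free witness'' for (\ref{prop4})b,c fails outright, since such a witness must have first and second coordinates both zero. Once this is corrected, your worry that the new leaves of Transformation~1 ``carve an old $A$-block into several smaller blocks'' evaporates: the chain $\underline{x}^1$--$\underline{x}^2$--$\underline{x}^3$ has all first coordinates positive, every old neighbour of $\underline{x}$ is reattached to one of these three, and the leaves $\underline{r}^1,\underline{x}^4$ merely dangle as new boundary points. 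Transformation~1 therefore never splits an $A$-block, and (\ref{prop4}) is immediate for it.

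The genuine splitting happens only in Transformation~2, and for a reason you do not identify: $\underline{w}^2$ is joined to every neighbour of $\underline{w}$ \emph{except} $\underline{y}$, so any $A$-path that entered $\underline{w}$ from $\underline{y}$ is severed, and the block may fall into two pieces (one through $\underline{w}^2$ and $\underline{x}$, one through $\underline{y}$). To show each piece still satisfies (\ref{prop4}) the paper invokes property (\ref{prop3}) in $A_{i-1}$ with $\underline{c}=\underline{w}$ and $\underline{a}\in\{\underline{x},\underline{y}\}$: this supplies, on each side, two vertices $\underline{e},\underline{f}$ of the required type, reached by $A$-paths whose internal fourth coordinates are all strictly below $w_4$ and which therefore avoid $\underline{w}$ and survive the transformation. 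Your sketch never makes this use of (\ref{prop3}); without it there is no argument for (\ref{prop4}) after Transformation~2.

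A smaller gap in your treatment of (\ref{prop1}): when the old witness $\underline{d}$ with $d_1\ge 3$ plays the role of $\underline{x}$ in Transformation~1, the descendant of $\underline{d}$ adjacent to a surviving saturated $\underline{c}$ may be $\underline{x}^3$ (first coordinate $1$) or $\underline{x}^2$ (first coordinate $d_1-2$), not a vertex with first coordinate $d_1$. The paper rules this out by showing that such a $\underline{c}$ would have to be $\underline{y}$, whence $c_4=y_4\le x_4=d_4<c_4$, a contradiction; your ``renamed to $\underline{d}^2$'' does not cover this case.
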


\begin{proof}
We say that a vertex $v$ is created by the $i$th transformation if $\underline{v}=\underline{z}^j$ is a vertex of $A_i$ and $z$ is a vertex of $A_{i-1}$.
In this situation we say that $v$ is a descendant of $z$. A vertex is involved in a transformation if either it is created by that or its vector is changed by it.

Notice that the transformations keep the fourth coordinates of the vertices and if two vertices are descendants of the same vertex, then their fourth coordinates are the same.%, and only the second transformation destroys an edge. 

\textbf{(\ref{prop1}):} %Let $\underline{c}$ be a saturated vertex in $A_i$. Note that $\underline{c}$ is not created by the $i$th transformation.  
If $\underline{c}$ is a saturated vertex in $A_i$, then it is not created by the $i$th transformation and it was
saturated in $A_{i-1}$ also. If none of its neighbors were
involved in the last transformation, then the property is clearly
holds. Therefore assume the opposite.
 Assume that $\underline{c}$ had a neighbor $\underline{d}$ in $A_{i-1}$, such that $d_1\geq 3$ and
$d_4<c_4$ in $A_{i-1}$. 

If $\underline{d}$ is contained in $A_i$ also, then the $i$th transformation did not change $d_1$. In that case $\underline{d}$ and $\underline{c}$ are adjacent in
$A_i$ and we are done.

Otherwise, the $i$th transformation created some descendants of $\underline{d}$.

If it was Transformation 1 then a descendant of $\underline{d}$ is connected to $\underline{c}$ and either its first coordinate equals $d_1$ or it is $d_1-2$. In the first case we are done and the latter can happen if and only if $\underline{d}$ acted as $\underline{x}$ in that transformation.
%and in that ca
%$\underline{x}=\underline{d}$. %$\underline{c}$ is still saturated, therefore if this transformation have been aplied in the last step, then $\underline{y}=\underline{c}$, but $y_4<x_4=d_4<c_4$ which
However, this is not possible, because this
would mean that $\underline{c}$ acted as $\underline{y}$ but $y_4\leq
x_4$ by (\ref{prop2}) and the choice of $y$ in Transformation 1, therefore $c_4=y_4\leq x_4=d_4<c_4$ which is a
contradiction. 

The remaining case is that two descendants of $\underline{d}$ are created by Transformation 2. Since $d_4<c_4$, vertex $\underline{c}$ can not be $\underline{x}$ or $\underline{y}$ in Transformation 2, therefore it is adjacent to $\underline{d}^2=\underline{w}^2$ in $A_i$ and the first coordinate of this vertex equals $d_1$.

%Therefore $d_1\geq 3$ in $A_i$. Transformations do not
%affect the fourth coordinates. Finally, the edge between $c$ and $d$
%is still there, because the only possible transformation which deletes
%it makes $c$ unsaturated.

Now assume that there are neighbors $\underline{e}$ and
$\underline{f}$ in $A_{i-1}$ such that their fourth coordinates are smaller than
$c_4$ and $e_1,f_1>0$.
We may assume that $e_1,f_1<3$, otherwise we
obtain the previous case. Therefore neither $e_1$ nor $f_1$ can act as $\underline{x}$ in Transformation $1$. 

If $\underline{e}$ is contained in $A_i$, then it is still adjacent to $\underline{c}$.
If $\underline{e}$ is replaced with some descendants by the $i$th transformation, 
then one of its descendants keep its first coordinate and that one is connected to $\underline{c}$. Like
in the previous case it cannot happen that $\underline{c}=\underline{y}$ and $\underline{e}=\underline{x}$ in Transformation 1 or $\underline{c}=\underline{x}$ and $\underline{e}=\underline{w}$ in Transformation 2. 
We can state the same for $\underline{f}$.

\textbf{(\ref{prop2}):} 
If $\underline{a}$ is contained in $A_{i-1}$ then $a_1\geq 3$ in $A_{i-1}$ also. Therefore according to ($\ref{prop2}$) there is a $\underline{b}$ which is adjacent to $\underline{a}$ and $b_4\leq a_4$. Either $b_4$ or one of its descendants is adjacent to $\underline{a}$ in $A_i$, therefore we are done.

Otherwise, $\underline{a}$ is a descendant of a vertex $\underline{v}$. $\underline{v}$ has a neighbor $\underline{b}$ whose fourth coordinate is at most $a_4$. There are several cases:

First case: $\underline{v}=\underline{r}$ in Transformation $1$, where $r\in R$. If we remove $\underline{x}$ from the neighborhood of $\underline{v}$ and add $\underline{x^3}$ we obtains the neighborhood of $\underline{a}$. Therefore $\underline{a}$ has a neighbor whose fourth coordinate is not bigger.

Second case: $\underline{v}=\underline{x}$ in Transformation $1$. $\underline{y}$ had the smallest fourth coordinate among the neighbors of $\underline{x}$, thus $y_4\leq x^2_4=a_4$ and $\underline{y}$ and $\underline{a}$ are adjacent in $A_i$.

Third case: $\underline{v}=\underline{w}$ in Transformation $2$. Since $a_1\geq 3$\textcolor{red}{,} $\underline{a}=\underline{w^2}$ and $y_4\leq w_4=w_4^2$ according to (\ref{prop2}).% we are done. 

\textbf{(\ref{prop3}):} Transformation 1 keeps the $A$-paths,
because it keeps connectivity and the first coordinate becomes zero
only at leaves.
Transformation 2 destroys some $A$-paths but all of them contain the saturated vertex which was handled by the transformation and whose fourth coordinate
was at least $c_4$.

% Transformation 2. destroys only one edge, but this edge
%can not be included in these $A$-paths, because the fourth coordinate
%of one of its endpoints is at least $c_4$.

\textbf{(\ref{prop4}):} Transformation 1 does not split an
$A$-block, furthermore it keeps the number of vertices whose third coordinate is one and whose first and second coordinate are both zero in each $A$-block.

Transformation 2 either does not split an $A$-block and keeps
the investigated quantities, or it splits an $A$-block to two
$A$-blocks. But (\ref{prop3}) guarantees that both blocks contain
enough vertices whose third coordinate is one or both first and second coordinates are zero.%5 double covered or cooperation free vertices.
\end{proof}

\begin{claim}
If Claim \ref{Aux_prop} holds for $A_i$ and there is a saturated vertex, then at least one of the two transformations can be applied to $A_i$.
\end{claim}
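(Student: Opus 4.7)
The plan is to observe that the statement is essentially a direct corollary of property~(\ref{prop1}) of Claim~\ref{Aux_prop}, which by hypothesis holds for $A_i$. First I would pick a saturated vertex $\underline{w}$ of $A_i$ (existence is guaranteed by the hypothesis) whose fourth coordinate $w_4$ is maximal among all saturated vertices of $A_i$. This is exactly the selection rule already used in the construction when deciding which vertex to process with either transformation, so any such choice will be compatible with the setups of both Case~1 and Case~2.

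Next I would apply Claim~\ref{Aux_prop}(\ref{prop1}) to $\underline{w}$: since $w_1 w_2 > 0$, one of its two alternatives must hold. In alternative~(a) there is a neighbor $\underline{d}$ of $\underline{w}$ with $d_1 \geq 3$ and $d_4 < w_4$. Setting $\underline{x} := \underline{d}$ realizes exactly the triggering condition for Case~1, so Transformation~1 is applicable at $\underline{w}$. In alternative~(b) there are two neighbors $\underline{e}$ and $\underline{f}$ of $\underline{w}$ with $e_1, f_1 > 0$ and $e_4, f_4 < w_4$, which is exactly the triggering condition for Case~2, so Transformation~2 is applicable at $\underline{w}$.

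Since the two alternatives of Claim~\ref{Aux_prop}(\ref{prop1}) were tailored precisely to match the triggers of the two transformations, there is no real obstacle here. The only step that requires any care is a bookkeeping check: one must verify that the vertex labels $\underline{d}$, $\underline{e}$, $\underline{f}$ appearing in Claim~\ref{Aux_prop}(\ref{prop1}) correspond correctly to the roles of $\underline{x}$ (in Transformation~1) and $\underline{x}, \underline{y}$ (in Transformation~2), and that the maximality of $w_4$ rules out any interference from other saturated vertices of $A_i$. Both checks are routine, and no deeper pebbling argument is needed at this point.
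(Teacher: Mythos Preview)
Your proposal is correct and follows exactly the paper's approach: the paper's own proof is the single sentence ``The first proposition of Claim~\ref{Aux_prop} guarantees this,'' and you have simply spelled out in detail how the two alternatives of property~(\ref{prop1}) match the triggers of Transformation~1 and Transformation~2, respectively. The maximality check you mention is not actually needed for applicability (it is only the selection rule the paper uses), so your argument could even be shortened.
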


The first proposition of Claim \ref{Aux_prop} guarantees this.

%EZ VALSZEG IGAZ, DE IGAZABOL NEM KELL
%\begin{claim}
%If a vertex $v$ has property $v_1=v_2=0$, then it has the same property after the application of a transformation. Furthermore transformations do not create new vertices having this property.
%\end{claim}

\begin{corollary}
There is an $A_k$, such that there is no saturated vertex in $A_k$, furthermore $\sum_{\underline{a}\in A_0}a_i=\sum_{\underline{a}\in A_k}a_i$,
$i\in \{1,2,3\}$.% and 
%$\sum_{\underline{a}\in A_0}\ind (a_1=a_2=0)=\sum_{\underline{a}\in A_k}\ind (a_1=a_2=0)$.
\end{corollary}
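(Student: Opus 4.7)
The plan is to obtain $A_k$ by iterating the two transformations, using the preceding claims as a package: Claim~\ref{claim4.5} guarantees that the structural properties of Claim~\ref{claim4.2} are inherited, the unlabeled claim above guarantees that some transformation remains available as long as a saturated vertex exists, and Claim~\ref{megorzodik} guarantees that the sums we care about are preserved at each step.

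More precisely, I would proceed by induction on $i$. Starting from $A_0$, which satisfies Claim~\ref{Aux_prop} (this was established as the translation of Claims~\ref{3cexess}, \ref{3asszomszeda}, and Lemmas~\ref{coopszomszed}, \ref{slushlemma}), suppose $A_i$ has already been constructed, still satisfies all four items of Claim~\ref{Aux_prop}, and satisfies
$$\sum_{\underline{a}\in A_0}a_j \;=\; \sum_{\underline{a}\in A_i}a_j \qquad (j\in\{1,2,3\}).$$
If $A_i$ contains no saturated vertex, we set $k=i$ and we are done. Otherwise, by the claim immediately preceding the corollary, property~(\ref{prop1}) of Claim~\ref{Aux_prop} supplies an applicable transformation; we apply it and obtain $A_{i+1}$. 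Claim~\ref{oroklodes} ensures that $A_{i+1}$ again satisfies Claim~\ref{Aux_prop}, and Claim~\ref{megorzodik} ensures that the three sums are unchanged, so the inductive invariant is maintained.

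It remains to verify that the process terminates. Here I would use the claim stating that both transformations strictly decrease the number of saturated vertices: since this count is a nonnegative integer, the sequence $A_0, A_1, A_2, \dots$ cannot be infinite, and some index $k$ must be reached at which no saturated vertex exists. Combining this with the invariant $\sum_{\underline{a}\in A_0}a_j = \sum_{\underline{a}\in A_k}a_j$ for $j \in \{1,2,3\}$ yields the corollary.

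The main obstacle is really just verifying that nothing subtle goes wrong at termination; the hard technical work has already been done in Claim~\ref{oroklodes} (preservation of Claim~\ref{Aux_prop}) and Claim~\ref{megorzodik} (preservation of the sums). Once those are in hand, the corollary is a short well-ordering argument on the number of saturated vertices.
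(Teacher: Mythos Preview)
Your proposal is correct and is exactly the argument the paper intends: the corollary is stated without an explicit proof because it is meant to follow immediately from the three preceding claims (preservation of the sums, strict decrease of the saturated-vertex count, and availability of a transformation while a saturated vertex remains), together with Claim~\ref{oroklodes} maintaining Claim~\ref{Aux_prop}. You have simply written out this implicit well-ordering argument in detail.
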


\begin{lemma}
  If $A_k$ does not contain any saturated vertices and Claim
  \ref{Aux_prop} holds, then for each $A$-block $\mathcal{B}$
 $$\sum_{\underline{a}\in \mathcal{B}}a_2-\sum_{\underline{a}\in \mathcal{B}}a_3
\leq\sum_{\underline{a}\in \mathcal{B}}a_1(\Delta-2). $$
\label{izelemma}
\end{lemma}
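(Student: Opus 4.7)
The plan is to work inside the $A$-block $\mathcal{B}$ and exploit the fact that $A_k$ has no saturated vertex, so every $\underline{a}\in\mathcal{B}$ satisfies $a_1 a_2 = 0$. Let $X = \{\underline{a}\in\mathcal{B} : a_1 > 0\}$; automatically $a_2 = 0$ for every $\underline{a}\in X$. The remaining vertices of $\mathcal{B}$ partition into three classes: $C$ (those with $a_2=1$), $D$ (those with $a_3=1$ but $a_1=a_2=0$), and $F$ (those with $a_1=a_2=a_3=0$). Write $X_3 = \{\underline{a}\in X : a_3=1\}$. Then the target inequality rewrites as $|C| - |D| - |X_3| \leq (\Delta-2)\sum_{\underline{x}\in X} x_1$.

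I will first prove two structural facts. \textbf{(I)} The subgraph of $A_k$ induced by $X$ is connected: any two vertices of $X$ lie in the common $A$-block $\mathcal{B}$ and hence are joined by an $A$-path whose inner vertices have positive first coordinate, so the entire path stays in $X$. \textbf{(II)} Every $\underline{y}\in\mathcal{B}\setminus X$ has at least one neighbour in $X$: an $A$-path from $\underline{y}$ to any member of $X$ has length at least $1$, and the second vertex on it is either an inner vertex (in $X$ by definition) or an endpoint already in $X$.

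Next comes a straightforward edge count. Write $n_X=|X|$ and let $e_{XX}$, $e_X$ be the number of edges of $A_k$ inside $X$ and between $X$ and $\mathcal{B}\setminus X$ respectively. Fact (I) gives $e_{XX}\geq n_X-1$, and since each $\underline{x}\in X$ has degree at most $\Delta$ in $A_k$ (the transformations preserve $\Delta$ by Claim~\ref{megorzodik}), the degree sum yields $2e_{XX}+e_X \leq \Delta n_X$, whence $e_X \leq (\Delta-2) n_X + 2$. By (II), each vertex of $C\cup D\cup F$ contributes at least one edge to $e_X$, so $|C|+|D|+|F| \leq e_X$. Substituting, and using the trivial bound $\sum_{\underline{x}\in X} x_1 \geq n_X$, the claim reduces to showing $2|D|+|F|+|X_3| \geq 2$.

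This final inequality is supplied by Claim~\ref{Aux_prop}~(\ref{prop4}): case (a) gives $|X_3|+|D|\geq 2$, so $|X_3|+2|D|\geq 2$; case (b) gives $|D|+|F|\geq 2$, so $2|D|+|F|\geq 2$; in case (c), if $|D|\geq 1$ then $2|D|\geq 2$, and otherwise $|X_3|\geq 1$ and $|F|\geq 1$, so $|X_3|+|F|\geq 2$. The only point needing a little care will be verifying that the additive slack of $2$ produced by the edge count matches exactly the pair of ``special'' vertices that (\ref{prop4}) guarantees in every $A$-block; once this alignment is in place, the rest is bookkeeping.
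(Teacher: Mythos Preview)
Your argument is essentially the paper's own proof. Both bound the number of boundary vertices of $\mathcal{B}$ by $(\Delta-2)\cdot|X|+2$ using that the inner vertices $X$ induce a connected subgraph of maximum degree $\Delta$, then invoke property~(\ref{prop4}) to absorb the additive $+2$. The paper phrases the first step as an induction on $|X|$ via a spanning tree of $X$; your edge count $2e_{XX}+e_X\le\Delta|X|$ together with $e_{XX}\ge |X|-1$ is the same bound written more directly.

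There is one small gap. Your ``rewriting'' $\sum a_3=|D|+|X_3|$ and your case~(a) conclusion $|X_3|+|D|\ge 2$ both tacitly assume that no vertex of $\mathcal{B}$ has $a_2=1$ and $a_3=1$ simultaneously. This is true---in $A_0$ a cooperation vertex is unreachable under both $P$ and $U$, hence not double covered, and every transformation assigns third coordinate $0$ to any newly created vertex whose second coordinate is $1$---but you have not said so. The paper avoids the issue altogether: instead of your $2|D|+|F|+|X_3|\ge 2$ it uses the weaker inequality $\sum_{\underline{a}\in\mathcal{B}}a_3 + (|D|+|F|) \ge 2$, which follows from~(\ref{prop4}) with no side condition and, combined with $|C|+|D|+|F|\le(\Delta-2)\sum a_1+2$, yields the lemma immediately. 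Either add the one-line justification of the invariant $a_2a_3=0$, or switch to the paper's bookkeeping.
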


\begin{df}
  Let $\mathcal{B}$ be an $A$-block in an auxiliary graph. We say that
  a vertex of $\mathcal{B}$ is \emph{inner} vertex if its first
  coordinate is positive otherwise, it is called a \emph{boundary}
  vertex.
\end{df}
\begin{claim} Consider an $A$-block of an auxiliary graph $A$.  Let
  the number of the boundary and the number of inner vertices denoted
  by $b$ and $i$, respectively. If $a_1 a_2=0$ holds for each
  $\underline{a}\in A$ then $b\leq (\Delta-2)i+2$ is satisfied.
\end{claim}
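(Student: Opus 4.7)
The plan is to reduce the inequality to a spanning-tree counting argument on the inner vertex set of the $A$-block. First I would establish two structural facts about an arbitrary $A$-block $\mathcal{B}$. Let $I\subseteq \mathcal{B}$ denote the inner vertices (those with $a_1>0$) and $B\subseteq \mathcal{B}$ the boundary vertices (those with $a_1=0$), so $|I|=i$ and $|B|=b$. Note that $i\geq 1$ by property~(6). The first claim is that the induced subgraph $A[I]$ is connected: given two inner vertices $\underline{u},\underline{v}\in I$, property~(7) supplies an $A$-path between them, and every internal vertex of that path has first coordinate positive; a short maximality argument (concatenating such a vertex's path-suffix with any $A$-path among block members at the internal inner vertex $\underline u$) shows all such internal vertices must themselves lie in $\mathcal{B}$, so the path stays in $I$.

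The second structural fact is that every boundary vertex $\underline{w}\in B$ has at least one neighbor in $I$. Fix any inner vertex $\underline{u}$ of $\mathcal{B}$ (exists by (6)) and take an $A$-path $\underline{w}=\underline{w}_0,\underline{w}_1,\dots,\underline{w}_k=\underline{u}$ witnessing property~(7). If $k=1$, then $\underline{u}$ itself is the required inner neighbor of $\underline{w}$. If $k\geq 2$, then $\underline{w}_1$ is inner (since it is an internal vertex of an $A$-path) and adjacent to $\underline{w}$; the same concatenation/maximality argument as above (concatenating the $A$-path $\underline{w}_1,\dots,\underline{u}$ with any $A$-path from $\underline{u}$ into $\mathcal{B}$, using that $\underline{u}$ is inner at the splice) shows $\underline{w}_1\in\mathcal{B}$, so $\underline{w}_1\in I$.

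Now the counting is straightforward. By the second fact I can assign to each $\underline{w}\in B$ a distinct inner–boundary edge (the one to a chosen inner neighbor in $\mathcal{B}$), giving $b\leq e$, where $e$ is the number of edges of $A$ with one endpoint in $I$ and the other in $B$. By the first fact I can pick a spanning tree $T$ of $A[I]$, so $\sum_{\underline{v}\in I}d_T(\underline{v})=2(i-1)$. For each inner vertex $\underline{v}$, its $d_T(\underline{v})$ tree-neighbors lie in $I$ and are therefore disjoint from its neighbors in $B$; together all neighbors of $\underline{v}$ number at most $\Delta$, so $\underline{v}$ has at most $\Delta-d_T(\underline{v})$ neighbors in $B$. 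Summing over $I$,
\[
b\leq e\leq\sum_{\underline{v}\in I}\bigl(\Delta-d_T(\underline{v})\bigr)=i\Delta-2(i-1)=(\Delta-2)i+2.
\]
The hypothesis $a_1 a_2=0$ is not explicitly needed in the counting; its role is to guarantee we are in the stabilized auxiliary graph where Claim~\ref{claim4.2} still applies, so the $A$-block structure is well defined. The main obstacle to be careful about is the maximality argument used to place the inner neighbor of a boundary vertex inside the block; the small-$i$ degenerate case $i=1$ (where $T$ has no edges) is handled by the same formula, giving $b\leq\Delta=(\Delta-2)\cdot 1+2$.
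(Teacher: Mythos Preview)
Your proof is correct and follows essentially the same approach as the paper: both arguments hinge on a spanning tree of the inner vertex set and the fact that each inner vertex has at most $\Delta$ neighbors. The paper packages this as an induction that peels off a leaf of the spanning tree, while you do the equivalent direct degree-sum count $\sum_{\underline{v}\in I}(\Delta-d_T(\underline{v}))=(\Delta-2)i+2$; you are also more careful than the paper in explicitly verifying that $A[I]$ is connected and that every boundary vertex has an inner neighbor in the block, which the paper leaves implicit.
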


\begin{proof}
  Proof by induction: The base case is an $A$-block with one inner
  vertex. This $A$-block is the closed neighborhood of the only inner
  vertex, therefore the number of boundary vertices is at most
  $\Delta$. Now we assume that for any $i<k$ the inequality is
  true. Let $i=k$. We take a spanning tree of the inner vertices and
  consider a leaf vertex $\underline{l}$. If we set $l_1$ to zero, then $\underline{l}$
  becomes a boundary vertex 
%\textbf{miert? ezt kene magyarzani} \textcolor{red}{Mert ez a boundary vertex definíciója. Vagy mit kéne magyarázni?}
 and at
  most $\Delta-1$ boundary vertices, which are neighbors of $\underline{l}$, are
  dropped from the $A$-block. The number of inner vertices is
  decreased by one, and the number of boundary vertices is decreased
  by at most $\Delta-2$. Using the induction hypothesis the proof is completed.
\end{proof}

\begin{proof2}{Lemma \ref{izelemma}}
  Consider an $A$-block $\mathcal{B}$ and a boundary vertex $v$
  of $\mathcal{B}$. Either $v_2=1$ or $v_1=v_2=0$. Thus we
  have: {$b=\sum_{\underline{a}\in
    \mathcal{B}}a_2+\N$ where $\N$ is the number of vertices in $\mathcal{B}$ whose first two coordinates are zero.} It is also clear that
  $\sum_{\underline{a}\in \mathcal{B}}a_1\geq i$. Combining these
  observations and the previous claim:
$$\sum_{\underline{a}\in \mathcal{B}}a_2+\N=b\leq (\Delta-2)i+2\leq (\Delta-2)\sum_{\underline{a}\in \mathcal{B}}a_1+2. $$
% The 4th property of Claim \ref{Aux_prop}, which also holds for $A_k$
% as a result of Claim \ref{oroklodes},
Claim~\ref{claim4.5} implies that  Claim~\ref{Aux_prop}~(\ref{prop4}) holds for $A_k$. This
guarantees that
$\sum_{\underline{a}\in \mathcal{B}} a_3+\N\geq2$. Therefore
$$\sum_{\underline{a}\in \mathcal{B}}a_2+\N\leq (\Delta-2)\sum_{\underline{a}\in \mathcal{B}}a_1+\sum_{\underline{a}\in \mathcal{B}} a_3+\N, $$
$$\sum_{\underline{a}\in \mathcal{B}}a_2-\sum_{\underline{a}\in \mathcal{B}} a_3\leq (\Delta-2)\sum_{\underline{a}\in \mathcal{B}}a_1.$$
\end{proof2}

\begin{proof2}{Lemma \ref{vertex-excess}}
We distinguish three cases depending on $\Delta$.

\textbf{Case 1: $\Delta\geq 4 $}

  Let $A_k$ be the auxiliary graph which we obtained from $A_0$ by
  applying transformations until it does not contain any more saturated
  vertices. The last lemma holds for each $A$-block, therefore:
$$\sum_{\mathcal{B}}\left(\sum_{\underline{a}\in \mathcal{B}}a_2-\sum_{\underline{a}\in \mathcal{B}} a_3\right)\leq \sum_{\mathcal{B}}(\Delta-2)\sum_{\underline{a}\in \mathcal{B}}a_1.$$
Only boundary vertices can be included in multiple blocks, and the
first and third coordinate of a boundary vertex is zero,
thus: $$\sum_{\underline{a}\in A_k}a_2-\sum_{\underline{a}\in A_k}
a_3\leq\sum_{\mathcal{B}}\left(\sum_{\underline{a}\in
    \mathcal{B}}a_2-\sum_{\underline{a}\in \mathcal{B}} a_3\right)\leq
\sum_{\mathcal{B}}(\Delta-2)\sum_{\underline{a}\in \mathcal{B}}a_1=
(\Delta-2)\sum_{\underline{a}\in A_k}a_1.$$
%$$\sum_{\underline{a}\in A_k}a_2-\sum_{\underline{a}\in A_k} a_3\leq (\Delta-2)\sum_{\underline{a}\in A_k}a_1$$ 
Using Claim \ref{megorzodik} we obtain 
$$\sum_{\underline{a}\in A_0}a_2-\sum_{\underline{a}\in A_0} a_3\leq (\Delta-2)\sum_{\underline{a}\in A_0}a_1.$$
%This is equivalent with the desired statement.

\textbf{Case 2: $\Delta =1,2$}  

If the graph consists of multiple connected components we may restrict our attention to the component containing the unit.  Let $d$ be the number of double covered vertices. We first verify the lemma in the case $\Delta=1$.  In this case, the graph consists of a matching and isolated vertices.  Thus, we must have $\coop(P,P_u) = 0$, and we must show that $CE(P,P_u) \le d$.  If the unit, $u$, is isolated the result is trivial. Suppose the unit is in an edge $\{x,u\}$.  If $P(x)=0$, then $CE(P,P_u) = d = 0$.  If $P(x)=1$, then $CE(P,P_u)= d = 1$.  Suppose that $P(x)=a \ge 2$ and set $\abs{P_u} = b$.  We have $d=2$ and 
\begin{displaymath}
CE(P,P_u) = (a + \floor{b/2} - 1 + b + \floor{a/2} -1) - (a - 1 + \floor{a/2} -1) - (b - 1 + \floor{b/2}-1) = 2.
\end{displaymath} 
This completes the proof in the $\Delta=1$ case.  If $\Delta=2$, then
we may assume that the graph is a path or a cycle.  In this case we
have $\Delta-2 = 0$ so we must show $\coop(P,P_u) \le d$.  However, it
is easy to see that in a path or a cycle every cooperation vertex is
adjacent to a double covered vertex and, moreover, that double covered
vertex is on the path between the cooperation vertex and $u$ (possibly
$u$ itself). It follows that there are at least as many double covered
vertices as cooperation vertices, as desired.

\textbf{Case 3: $\Delta=3$}

%The only remaining case is when $\Delta=3$. 
We remind the reader that
the problem with the $\Delta=3$ case is that
it is not possible to add $\underline{x}^4$ in transformation 1. % the transformation where $\underline{x}^4$ is added is not possible. 

$\underline{x}^4$ is needed when $\underline{x}$ is saturated in $A_{i-1}$. In that case Transformation 1 handles $\underline{x}$ and substitutes it with unsaturated descendants. If the degree of any of the descendants of $\underline{x}$ is smaller than $\Delta$, then we can make $\underline{x^4}$ adjacent to this vertex and the problem is eliminated. Otherwise $\underline{x}$ has three neighbors: $\underline{y}$, $\underline{v}$ and $\underline{w}$. The one whose fourth coordinate is minimal among them is $\underline{y}$, also $v_4\leq w_4$ and both $\underline{v}$ and $\underline{w}$ are saturated vertices.
 
Now we make one of $\underline{x}$'s descendants saturated. We have to make sure that (\ref{prop1}) holds for this saturated descendant therefore we have to make a few new transformations.

Case 1: (\ref{prop1}) a) holds for $\underline{x}$ in $A_{i-1}$. Consider vertex $\underline{d}$, which is a neighbor of $\underline{x}$ in $A_{i-1}$, $d_4<x_4$ and $d_1\geq 3$. 
If $\underline{d}=\underline{y}$, then we set $x_2^2$ to one, otherwise we set $x_2^3$ to one and the rest of the transformation is similar to Transformation 1.

Case 2: (\ref{prop1}) b) holds for $\underline{x}$ in $A_{i-1}$. Then we apply the following transformation:

\textbf{Transformation 3}

\begin{itemize}

%\item Choose a neighbor $\underline{y}$ of $\underline{x}$ such
%  that its fourth coordinate is minimal among all neighbors of
%  $\underline{x}$.
\item Delete $x$ and add three vertices $x^1$, $x^2$ and $x^3$,
  such that $x^1_1=x_1^3=1$ and $x^2_1=x_1-2$. $x^1_2=x^3_2=0$, $x^2_2=1$,
  $x^1_3=x^3_3=0$ and $x^2_3=x_3$. Connect $\underline{x}^2$ with
  $\underline{y}$ and $\underline{x}^3$ and connect $\underline{x^1}$ with $\underline{x^3}$.
\item Delete $\underline{v}$ and $\underline{w}$ and add four vertices $\underline{v}^1$, $\underline{v}^2$, $\underline{w}^1$ and $\underline{w}^2$ and set the
  coordinates as: $v_1^1=v^1_3=0$, $v_2^1=1$, $v^1_4=v^2_4=v_4$,
  $v^2_1=v_1$, $v^2_2=0$ and $v^2_3=v_3$.  We set the coordinates of $w$ in the exact same way.
We connect $\underline{w}^1$ and $\underline{v}^1$ to $\underline{x}^1$. 
We make $\underline{w}^2$ adjacent to the neighbors of $\underline{w}$ and to $\underline{x}^3$. 
We make $\underline{v}^2$ adjacent to the neighbors of $\underline{v}$ and to $\underline{x}^2$.
\end{itemize}

This transformation is shown on Figure \ref{newfig}.

The new transformations are made in such a way that we immediately obtain that (\ref{prop1}) holds for the recently created saturated vertex. The proof of Claim \ref{oroklodes} can be repeated to prove that the statements of Claim \ref{claim4.2} hold after we apply these recently introduced transformations. %that (\ref{prop1}) and (\ref{prop2}) hold is the same like the $\Delta\geq 4$ case
Therefore all the statements following Claim \ref{claim4.2} hold in the $\Delta=3$ case.

\begin{figure}[H]
\centering
\scalebox{0.75}{\input{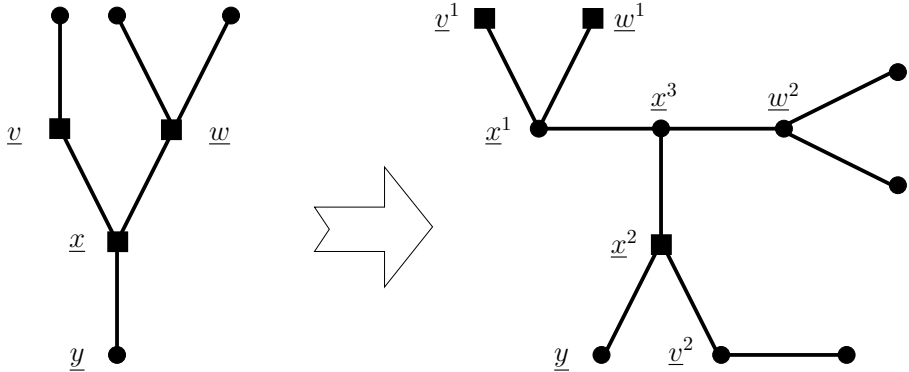}}
\caption{Transformation 3 which is needed when $\Delta=3$.}
\label{newfig}
\end{figure}

\end{proof2}

\section{Lower bound on the optimal pebbling number of vertex
  transitive graphs}
\label{sec5}

\begin{theorem}
  Let $P$ be an arbitrary solvable pebble distribution on $G$ and
  let $\mathcal{D}$ be a disjoint decomposition of $P$ to unit
  distributions. Denote the elements of $\mathcal{D}$ with $U_1, U_2,
  \dots, U_t$, so that $|U_i|\leq |U_{i+1}|$. Now
$$\sum_{i=1}^t\CE\left(\sum_{k=1}^{i-1}U_k,U_i\right)\geq \frac{|V(G)|-\sum_{i=1}^t \cov(U_i)}{\Delta-2}.$$
\end{theorem}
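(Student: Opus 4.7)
The plan is to apply Lemma \ref{vertex-excess} termwise on the decomposition. Since $P$ is solvable, $\cov(P)=|V(G)|$, so writing $P_{<i}:=\sum_{k=1}^{i-1}U_k$ and using equation (\ref{eq2}) of Claim \ref{excess_bontas} gives
\begin{equation*}
|V(G)|-\sum_{i=1}^{t}\cov(U_i)=\sum_{i=1}^{t}\bigl(\coop(P_{<i},U_i)-\DC(P_{<i},U_i)\bigr).
\end{equation*}
By the disjointness of the decomposition, $P_{<i}$ has no pebble at the support vertex $u_i$ of $U_i$, so whenever $|U_i|\geq 2$ Lemma \ref{vertex-excess} applies and yields
\begin{equation*}
\coop(P_{<i},U_i)-\DC(P_{<i},U_i)\leq (\Delta-2)\,\CE(P_{<i},U_i).
\end{equation*}

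The ordering $|U_i|\leq |U_{i+1}|$ takes care of single-pebble units (which the lemma does not cover). If $|U_i|=1$, then every earlier $U_k$ is also a single pebble, so $P_{<i}+U_i$ places at most one pebble on every vertex. No pebbling move is executable under such a distribution, hence $\cov(P_{<i}+U_i)=\cov(P_{<i})\cup\{u_i\}$ with $u_i\notin\cov(P_{<i})$, and $\TE(P_{<i}+U_i)=\TE(P_{<i})=\TE(U_i)=0$. Consequently $\coop$, $\DC$, and $\CE$ all vanish for these indices, and the termwise inequality reduces to $0\leq 0$.

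Adding the termwise inequalities over all $i$ and combining with the displayed identity gives
\begin{equation*}
|V(G)|-\sum_{i=1}^{t}\cov(U_i)\leq (\Delta-2)\sum_{i=1}^{t}\CE(P_{<i},U_i),
\end{equation*}
and dividing by $\Delta-2$ (positive in the regime of interest, $\Delta\geq 3$) yields the claim. There is no substantial obstacle: the only bookkeeping concern is the $|U_i|=1$ case, which the ordering of the decomposition is precisely designed to render trivial; all of the genuine difficulty has already been absorbed into Lemma \ref{vertex-excess}.
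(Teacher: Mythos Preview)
Your proof is correct and follows essentially the same approach as the paper: expand $\cov(P)=|V(G)|$ via Claim~\ref{excess_bontas}~(\ref{eq2}), apply Lemma~\ref{vertex-excess} termwise for the units of size at least two, and observe that the size-one units contribute nothing. Your justification of the $|U_i|=1$ case is in fact more explicit than the paper's, which simply asserts that $\coop$, $\DC$, and $\CE$ vanish there without spelling out that the ordering forces $P_{<i}+U_i$ to have at most one pebble per vertex.
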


\begin{proof}

  We use Claim \ref{excess_bontas}~(\ref{eq2}) in the following inquality to obtain the second line. To obtain the third line, notice that if
  $|U_i|=1$, then $\coop\left (\sum_{k=1}^{i-1}U_k,U_i\right)=\DC\left
    (\sum_{k=1}^{i-1}U_k,U_i\right)
  =\CE\left(\sum_{k=1}^{i-1}U_k,U_i\right)=0$. Otherwise $|U_i|\geq 2$
  and we can apply Lemma \ref{vertex-excess}.
\begin{equation*}
\begin{split}
|V(G)|&=\cov(P)=\sum_{i=1}^t \left (\cov(U_i)+\coop\left (\sum_{k=1}^{i-1}U_k,U_i\right)-\DC\left (\sum_{k=1}^{i-1}U_k,U_i\right) \right)\\
&= \sum_{i=1}^t \cov(U_i)+ \sum_{i=1}^t\left (  \coop\left (\sum_{k=1}^{i-1}U_k,U_i\right) -\DC\left (\sum_{k=1}^{i-1}U_k,U_i\right)\right)\\
&\leq  \sum_{i=1}^t \cov(U_i)+(\Delta-2)\sum_{i=1}^t\CE\left(\sum_{k=1}^{i-1}U_k,U_i\right)
\end{split}
\end{equation*}

\end{proof}

This result together with the corollary of Theorem \ref{excess_dist} and Claim \ref{excess_bontas} implies the following:

\begin{corollary}
If $P$ is a solvable distribution on a vertex-transitive graph $G$, then 
%$$|P|\geq \frac{|V(G)|+\UE(P)+\frac{|V(G)|-\sum_{i=1}^t \cov(U_i)}{\Delta-2}}{\ef(v)}$$
$$|P|\geq \frac{\frac{\Delta-1}{\Delta-2}|V(G)|+\UE(P)-\frac{1}{\Delta-2}\sum_{i=1}^t \cov(U_i)}{\ef(v)}$$
\label{fokov}
\end{corollary}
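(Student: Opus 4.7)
The plan is to assemble the final inequality by combining three ingredients that are already in place: the corollary of Theorem \ref{excess_dist} (the effect-based lower bound valid on vertex-transitive graphs), Claim \ref{excess_bontas}(1) (the decomposition of total excess into unit excess and cooperation excess), and the immediately preceding theorem (the lower bound on the total cooperation excess in terms of maximum degree). Since $G$ is vertex-transitive, $\ef(v)$ is a well-defined positive constant independent of the chosen $v$, and the corollary of Theorem \ref{excess_dist} gives
$$\ef(v)\cdot |P| \;\geq\; |V(G)| + \TE(P).$$

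Next I would unfold $\TE(P)$ via Claim \ref{excess_bontas}(1). The definition of unit excess directly yields $\sum_{i=1}^{t}\TE(U_i)=\UE(P)$, so that decomposition reads
$$\TE(P) \;=\; \UE(P) \;+\; \sum_{i=1}^{t}\CE\!\left(\sum_{k=1}^{i-1}U_k,\,U_i\right).$$
The preceding theorem bounds the last sum from below by $\dfrac{|V(G)|-\sum_{i=1}^{t}\cov(U_i)}{\Delta-2}$. Substituting both of these into the first displayed inequality gives
$$\ef(v)\cdot |P| \;\geq\; |V(G)| + \UE(P) + \frac{|V(G)|-\sum_{i=1}^{t}\cov(U_i)}{\Delta-2}.$$

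The remaining step is purely arithmetic: merging the two $|V(G)|$ contributions via $|V(G)|+\frac{|V(G)|}{\Delta-2}=\frac{\Delta-1}{\Delta-2}|V(G)|$ and dividing through by the positive constant $\ef(v)$ produces exactly the claimed bound on $|P|$. I do not foresee any real obstacle; all the substantive work has been absorbed into Lemma \ref{vertex-excess} and the theorem immediately above the corollary, and what is left here is a clean assembly of those results together with the observation that the ordering of units in the decomposition $\mathcal{D}$ is irrelevant for both $\UE(P)$ and $\TE(P)$, so the particular ordering $|U_i|\leq |U_{i+1}|$ required by the preceding theorem can be chosen without loss of generality.
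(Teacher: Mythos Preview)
Your proposal is correct and follows exactly the route the paper indicates: combine the corollary of Theorem \ref{excess_dist}, Claim \ref{excess_bontas}(1), and the theorem immediately preceding the corollary, then simplify. The only addition you make beyond the paper's one-line justification is the explicit remark that the ordering $|U_i|\le|U_{i+1}|$ can be imposed without loss of generality, which is a harmless clarification.
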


This is a tool that helps to prove lower bounds on optimal pebbling
number. Also notice that each element of the formula can be calculated
efficiently.

\subsection{Back to square grids}
We would like to investigate finite square grids. It is easier to
investigate torus graphs instead of square grids, because they are
vertex-transitive. Let $T_{m,n}$ be the torus graph which we obtain if
we glue together the opposite boundaries of $P_{m+1}\square P_{n+1}$. 

Note that $T_{m,n}\cong C_m\square C_n$.

$P_{m}\square P_n$ can be obtained from $T_{m,n}$ by deleting some edges. Edge removal can not decrease the optimal pebbling number, therefore $\pi_{\opt}(T_{m,n})\leq \pi_{\opt}(P_m\square P_n)$. Therefore we work with $T_{m,n}$ in the rest of the section.

The size of the distance $i$ neighborhood in $T_{m,n}$ is at most $4i$. Thus Claim \ref{unit_prop} gives the following estimates on excess and coverage of any unit placed on $T_{m,n}$.
%In \cite{yerger} the authors proved that the coverage of a unit on the infinite square grid. Their calculation works for $T_{m,n}$ when the unit containing $2^k+r$ pebbles where $0\leq r<k$ and $2k+1\leq \min{m,n}$. Their result in this case is $\frac{(k+1)^2+k^2}{2^k+r}$. It is clear that this is an upper bound for the case when $2k+1>\min{m,n}$. Using elementary calculus the following can be easily justified. 

\begin{claim}
Let $U$ be a single unit on $T_{m,n}$. Then:
\begin{equation*}
\cov(U)\leq
\begin{cases}
1 & \text{if } |U|=1 \\
\frac{5}{2}|U| & \text{if } 2\leq |U|\leq 3 \\
\frac{13}{4}|U| & \text{if } |U|\geq 4 \\
\end{cases}
\end{equation*}
\label{cov_becs}
\end{claim}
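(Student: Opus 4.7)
The plan is to apply Claim \ref{unit_prop} directly to $T_{m,n}$ together with the known bound on the sizes of distance neighborhoods in the torus. Since $T_{m,n}$ is a $4$-regular vertex transitive graph in which, as noted immediately before the claim, every distance-$i$ neighborhood satisfies $|N_i(u)|\leq 4i$ (and of course $|N_0(u)|=1$), the formula $\cov(U)=\sum_{i=0}^{\lfloor\log_2|U|\rfloor}|N_i(u)|$ from Claim \ref{unit_prop} yields
$$\cov(U)\leq 1+\sum_{i=1}^{k}4i=1+2k(k+1),$$
where I set $k=\lfloor\log_2|U|\rfloor$.

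Next I would split into cases according to the value of $|U|$. For $|U|=1$ we have $k=0$, and the above formula gives $\cov(U)\leq 1$. For $|U|\in\{2,3\}$ we have $k=1$, so $\cov(U)\leq 1+2\cdot 1\cdot 2=5$; since $|U|\geq 2$, this yields $\cov(U)\leq 5\leq \tfrac{5}{2}|U|$. For $|U|\geq 4$ we have $k\geq 2$ and $|U|\geq 2^k$, so it suffices to prove the sharpened statement
$$1+2k(k+1)\leq \tfrac{13}{4}\cdot 2^k\qquad(k\geq 2).$$
For $k=2$ both sides equal $13$ (this is where the constant $\tfrac{13}{4}$ comes from and explains why the bound is tight for $|U|=4$). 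For $k=3$ the inequality reads $25\leq 26$.

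The only thing left is an induction on $k\geq 3$: if $1+2k(k+1)\leq \tfrac{13}{4}\cdot 2^k$, then the left-hand side grows by $4(k+1)$ while the right-hand side grows by $\tfrac{13}{4}\cdot 2^k$, and since $\tfrac{13}{4}\cdot 2^k\geq 4(k+1)$ for every $k\geq 3$ (comparing an exponential with a linear function), the induction step goes through. There is no real obstacle here; the only thing to be careful about is that the constant $\tfrac{13}{4}$ is tight precisely at $|U|=4$, so no weaker constant can replace it in the last case and no separate casework beyond $|U|\in\{1\},\{2,3\},\{\geq 4\}$ is possible.
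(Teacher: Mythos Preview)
Your proof is correct and follows precisely the approach the paper indicates: it applies the coverage formula from Claim~\ref{unit_prop} together with the neighborhood bound $|N_i(u)|\leq 4i$, then checks the three ranges of $|U|$. The paper itself offers no further argument beyond the sentence introducing the claim, so your write-up simply fills in the routine details (including the small induction for $k\geq 2$) that the paper leaves implicit.
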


\begin{claim}
Let $U$ be a single unit on $T_{m,n}$, where $\min(m,n)\geq 5$. We have the following estimate on the ratio of unit excess and the size of the unit:

%ITT KI KELL MEG SZAMOLNI HOGY MI VAN HA MAR KERESZTBEMETSZ A UNIT A TORUSZON----- Mi lenne, semmi, csak a legrövidebb út számít. Lehet aztán magammal beszélgetek. :D-Laci
\begin{equation*}
\exc(U)\geq 
\begin{cases}
0 & \text{if } |U|=1\\
\frac{1}{2}|U| & \text{if } 2\leq  |U|\leq 3\\
\frac{8}{5}|U| & \text{if } 4\leq |U|
\end{cases}
\end{equation*}
\label{exc_becs}
\end{claim}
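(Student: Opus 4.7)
The plan is to apply Claim~\ref{unit_prop} directly. Setting $k = |U|$ and letting $u$ be the pebbled vertex, we have
\begin{equation*}
\TE(U) = \sum_{i=0}^{\lfloor \log_2 k \rfloor} |N_i(u)|\left(\left\lfloor \frac{k}{2^i}\right\rfloor - 1\right),
\end{equation*}
and every summand is non-negative, so truncating the sum to small $i$ still yields a valid lower bound. Since $\min(m,n) \geq 5$, the small neighborhoods of $T_{m,n}$ achieve their full planar sizes $|N_0(u)| = 1$, $|N_1(u)| = 4$, and $|N_2(u)| = 8$; for the last equality one needs that the eight $L^1$-distance-$2$ points are pairwise distinct on the torus, which fails at $m = 4$ (where $(2,0) \equiv (-2,0)$) but holds for $m \geq 5$.

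The case $k = 1$ is immediate. For $k \in \{2,3\}$ only the $i = 0, 1$ terms appear and one computes $\TE(U) = (k-1) + 4(\lfloor k/2\rfloor - 1) \in \{1,2\}$, each at least $k/2$. For $k \geq 4$, retaining the $i = 0, 1, 2$ terms gives
\begin{equation*}
\TE(U) \geq (k-1) + 4\bigl(\lfloor k/2\rfloor - 1\bigr) + 8\bigl(\lfloor k/4\rfloor - 1\bigr).
\end{equation*}
For $k \in \{4, 5, 6\}$ the desired inequality $\TE(U) \geq \frac{8}{5} k$ is verified by direct substitution (the case $k = 5$ being the tight one, where both sides equal $8$). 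For $k \geq 7$ the floor estimates $\lfloor k/2\rfloor \geq (k-1)/2$ and $\lfloor k/4\rfloor \geq (k-3)/4$ yield $\TE(U) \geq 5k - 21$, and a trivial manipulation shows $5k - 21 \geq \frac{8}{5}k$ whenever $k \geq 105/17$, hence certainly for $k \geq 7$.

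There is no substantive obstacle here: once the torus neighborhood sizes are correctly identified (which is exactly where the hypothesis $\min(m,n) \geq 5$ enters), the result reduces to routine arithmetic. The bound becomes rather loose for large $k$ because all $i \geq 3$ contributions are discarded, but the constant $8/5$ is forced by the tight case $k = 5$, so there is no point in sharpening the estimate further.
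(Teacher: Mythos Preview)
Your argument is correct and matches the paper's own justification, which is simply the one-line remark that it suffices to check small units and to observe that the distance-$2$ neighbourhood alone contributes at least $\frac{8}{5}|U|$ excess for $|U|>4$. You have just supplied the arithmetic details the paper omits, including the verification that $|N_2(u)|=8$ requires $\min(m,n)\geq 5$ and the identification of $k=5$ as the tight case.
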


To obtain these bounds it is enough to check small units and notice that the distance $2$ neighborhood of $u$ contains at least $\frac{8}{5}\abs{U}$ excess when $\abs{U}>4$.

\begin{claim}
\label{ef_becs}
Let $v$ be a vertex of $T_{m,n}$. Then $\ef(v)< 9$.
\end{claim}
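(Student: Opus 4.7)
The plan is to bound $\ef(v)$ by a geometric-type series that sums to exactly $9$ in the limit, and then observe that finiteness of the torus forces a strict inequality.

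First, I would recall the definition $\ef(v)=\sum_{i=0}^{\diam(G)}(1/2)^i|N_i(v)|$. The key geometric input is that in the torus $T_{m,n}=C_m\sq C_n$, a vertex at distance exactly $i\geq 1$ from $v$ corresponds to a lattice point on the $\ell_1$-sphere of radius $i$ around $v$, and this sphere has at most $4i$ points (it has exactly $4i$ points in the infinite grid $\mathbb{Z}^2$, and identifications in the torus only decrease the count). Also $|N_0(v)|=1$.

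Using these bounds, I would estimate
\begin{equation*}
\ef(v)=1+\sum_{i=1}^{\diam(T_{m,n})}\left(\frac{1}{2}\right)^i|N_i(v)|\leq 1+\sum_{i=1}^{\diam(T_{m,n})}\frac{4i}{2^i}< 1+\sum_{i=1}^{\infty}\frac{4i}{2^i}.
\end{equation*}
The last infinite series is a standard one: using $\sum_{i=1}^{\infty}i x^i=x/(1-x)^2$ with $x=1/2$ one obtains $\sum_{i=1}^{\infty}i/2^i=2$, hence $\sum_{i=1}^{\infty}4i/2^i=8$, giving $\ef(v)<1+8=9$.

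The strict inequality is automatic because the torus is finite, so $\diam(T_{m,n})$ is finite and the truncated sum misses a positive tail of the convergent series. The only point that requires care is verifying the bound $|N_i(v)|\leq 4i$ in the torus, which I expect to be the routine but essential step — it holds because the covering map from $\mathbb{Z}^2$ to $C_m\sq C_n$ can only merge spheres, never enlarge them, and the $\ell_1$-sphere of radius $i$ in $\mathbb{Z}^2$ has exactly $4i$ points for $i\geq 1$. No obstacle beyond this bookkeeping is anticipated.
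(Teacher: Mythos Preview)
Your argument is correct and matches the paper's proof essentially line for line: both use $|N_i(v)|\leq 4i$ and then bound $\ef(v)$ strictly by $1+\sum_{i\geq 1}4i/2^i=9$. The only cosmetic differences are that the paper justifies $|N_i(v)|\leq 4i$ via the recursion $|N_i(v)|\leq |N_{i-1}(v)|+4$ (illustrated with a figure) rather than the covering-map argument, and evaluates the series by a double-sum rearrangement rather than the closed form $\sum i x^i = x/(1-x)^2$.
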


A similar result is proven in \cite{yerger} for the square grid. We mimic that calculation.

\begin{proof}
We know that $|N_0(v)|=1$, $|N_1(v)|=4$ and Fig. \ref{neighbors} shows that $|N_i(v)|\leq |N_{i-1}(v)|+4$. Therefore $|N_i(v)|\leq 4i$.

$$\ef(v)=\sum_{i=0}^{\diam(T_{m,n})}\left(\frac{1}{2}\right)^i|N_i(v)| < 1+\sum_{i=1}^{\infty}\left(\frac{1}{2}\right)^i 4i=1+4\sum_{i=1}^{\infty}\sum_{j=i}^{\infty}\left(\frac{1}{2}\right)^j=1+4\sum_{i=1}^{\infty}\left(\frac{1}{2}\right)^{i-1}=9
$$
\end{proof}

\begin{figure}
\centering
\includegraphics[scale=0.7]{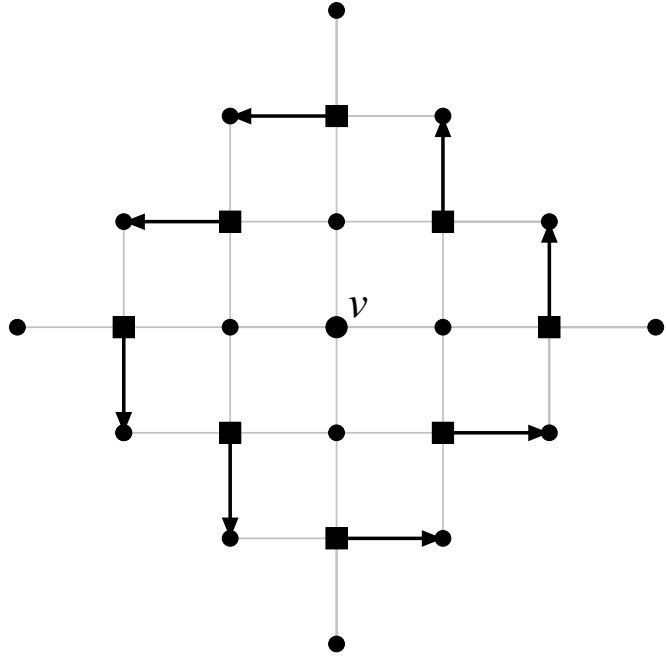}
\caption{A mapping shows that there are at most $4$ more vertices in the distance $3$ neighborhood than in the distance $2$ neighborhood.}
\label{neighbors}
\end{figure}

Now we can obtain our new lower bound on the optimal pebbling number of the square grid: 

\begin{theorem}
The optimal pebbling number of $T_{m,n}$ is at least $\frac{2}{13}nm$, when $m,n\geq 5$.% hence the optimal pebbling number of $P_n\square P_m$ is also.
\end{theorem}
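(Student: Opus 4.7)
The plan is to apply Corollary~\ref{fokov} to the vertex-transitive graph $T_{m,n}$ (where $\Delta=4$, so $\frac{\Delta-1}{\Delta-2}=\tfrac{3}{2}$ and $\frac{1}{\Delta-2}=\tfrac{1}{2}$), and to combine it with the three quantitative estimates already established for the torus: the coverage upper bound of Claim~\ref{cov_becs}, the unit-excess lower bound of Claim~\ref{exc_becs}, and the bound $\ef(v)<9$ of Claim~\ref{ef_becs}. Essentially all the hard work is done by the machinery of Sections~\ref{sec3} and~\ref{sec4}; the remaining task is just a careful accounting of coefficients, which is what governs the $\tfrac{2}{13}$ constant.

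Let $P$ be an arbitrary solvable distribution on $T_{m,n}$, decomposed into disjoint units $U_1,\dots,U_t$. I group the mass by unit size: let $a$ be the total number of pebbles lying in singleton units, $b$ the total in units of size $2$ or $3$, and $c$ the total in units of size at least $4$, so $|P|=a+b+c$. Claim~\ref{cov_becs} gives $\sum_i\cov(U_i)\leq a+\tfrac{5}{2}b+\tfrac{13}{4}c$, while Claim~\ref{exc_becs} gives $\UE(P)=\sum_i\exc(U_i)\geq \tfrac{1}{2}b+\tfrac{8}{5}c$. Substituting both estimates into Corollary~\ref{fokov} and simplifying, the numerator of the right-hand side is at least
\[\tfrac{3}{2}mn+\tfrac{1}{2}b+\tfrac{8}{5}c-\tfrac{1}{2}\bigl(a+\tfrac{5}{2}b+\tfrac{13}{4}c\bigr)=\tfrac{3}{2}mn-\tfrac{1}{2}a-\tfrac{3}{4}b-\tfrac{1}{40}c.\]
Assuming this numerator is nonnegative, dividing by $\ef(v)<9$ yields $9|P|\geq \tfrac{3}{2}mn-\tfrac{1}{2}a-\tfrac{3}{4}b-\tfrac{1}{40}c$, which I rearrange as
\[\tfrac{19}{2}a+\tfrac{39}{4}b+\tfrac{361}{40}c\;\geq\;\tfrac{3}{2}mn.\]

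The key numerical observation is that $\tfrac{19}{2}=\tfrac{38}{4}$, $\tfrac{39}{4}$, and $\tfrac{361}{40}=9.025$ are all at most $\tfrac{39}{4}$, so the left-hand side is bounded above by $\tfrac{39}{4}(a+b+c)=\tfrac{39}{4}|P|$. This gives $|P|\geq \tfrac{4}{39}\cdot\tfrac{3}{2}mn=\tfrac{2}{13}mn$, as desired.

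The one subtle point, which I view as the main (though mild) obstacle, is ensuring that the numerator $\tfrac{3}{2}mn-\tfrac{1}{2}a-\tfrac{3}{4}b-\tfrac{1}{40}c$ is nonnegative so that dividing by $\ef(v)<9$ preserves the inequality direction. I handle this by a trivial case split: if already $|P|\geq \tfrac{2}{13}mn$ there is nothing to prove, and otherwise $a+b+c<\tfrac{2}{13}mn$, so $\tfrac{1}{2}a+\tfrac{3}{4}b+\tfrac{1}{40}c\leq \tfrac{3}{4}|P|<\tfrac{3}{26}mn$, which is comfortably smaller than $\tfrac{3}{2}mn$. Finally, since edge deletion cannot decrease the optimal pebbling number, the same lower bound $\tfrac{2}{13}nm$ transfers from $T_{m,n}$ to $P_m\sq P_n$, yielding the headline bound of the paper.
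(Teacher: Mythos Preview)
Your proof is correct and follows essentially the same route as the paper: apply Corollary~\ref{fokov} with $\Delta=4$, plug in the coverage, unit-excess, and $\ef(v)<9$ estimates, and optimize over the unit-size breakdown. The only cosmetic difference is that the paper substitutes $S_1=|P|-S_{2,3}-S_{\geq 4}$ into the numerator immediately (so a $-\tfrac{1}{2}|P|$ term appears and is absorbed into the left side), whereas you keep $a,b,c$ separate and bound the three resulting coefficients by $\tfrac{39}{4}$; both arrive at $|P|\geq\tfrac{2}{13}mn$ with the worst case at units of size $2$ or $3$. Your sign-of-numerator case split is harmless but unnecessary: if the numerator were negative then $9|P|\geq 0>\text{numerator}$ holds trivially, so replacing $\ef(v)$ by $9$ is always safe.
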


\begin{proof}
\par Let $P$ be an optimal distribution of $T_{m,n}$ and let $\mathcal{D}$ be a disjoint decomposition of $P$ to unit distributions. Denote the elements of $\mathcal{D}$ with $U_1, U_2 \dots U_t$, such that $|U_i|\leq |U_{i+1}|$. Let $\mathcal{D}_{\geq 4}$ be the subset of $\mathcal{D}$ which contains all units whose size is at least four. Furthermore let $\mathcal{D}_{2,3}$ be the set which contains the units whose size is two or three, and  $\mathcal{D}_{1}$ be the set of units whose size is one. Denote the total number of pebbles which are placed on vertices belonging to $\mathcal{D}_{1}$ by $S_1$. Define $S_{2,3}$ and $S_{\geq 4}$ similarly. It is clear that $S_{1}=|P|-S_{2,3}-S_{\geq 4}$.  
%\begin{equation}
%\begin{split}
%nm&=\cov(P)\leq \sum_{i=1}^t \left (\cov(U_i)+\coop\left (\sum_{k=1}^{i-1}U_k,U_i\right)-\DC\left (\sum_{k=1}^{i-1}U_k,U_i\right) \right)=\\
%&= \sum_{i=1}^t \cov(U_i)+ \sum_{i=1}^t\left (  \coop\left (\sum_{k=1}^{i-1}U_k,U_i\right) -\DC\left (\sum_{k=1}^{i-1}U_k,U_i\right)\right)=\\
%&= \sum_{U\in\mathcal{D}_{1} }  \cov(U)+ \sum_{U\in\mathcal{D}_{2,3} }  \cov(U)+ \sum_{U\in\mathcal{D}_{\geq 4} }  \cov(U)+ \sum_{i=1}^t \left( \coop\left (\sum_{k=1}^{i-1}U_k,U_i\right)-\DC\left (\sum_{k=1}^{i-1}U_k,U_i\right) \right)\leq\\
%&\leq  S_1 +\frac{5}{2}S_{2,3}+\frac{13}{4}S_{\geq 4}+2\times \sum_{i=1}^t \CE\left (\sum_{k=1}^{i-1}U_k,U_i\right)=\\
%&=  |P|+\frac{3}{2}S_{2,3}+\frac{9}{4}S_{\geq4}+2\times \TCE(P)
%\end{split}
%\end{equation}

We start with Corollary \ref{fokov} and use the estimates of claims \ref{cov_becs}, \ref{exc_becs} and \ref{ef_becs}. %We also use that the effect of a pebble on $T_{m,n}$ is less than nine, which is an easy calculation. \textbf{Ezt a tetel elott ki kell mondani lemmanak es bebizonyitani.}

\begin{equation*}
\begin{split}
  |P|\geq& \frac{\frac{\Delta-1}{\Delta-2}|V(G)|+\UE(P)-\frac{1}{\Delta-2}\sum_{i=1}^t \cov(U_i)}{\ef(v)}\geq\\
  \geq&\frac{\frac{3}{2}nm+\frac{1}{2}S_{2,3}+\frac{8}{5}S_{\geq 4}-\frac{1}{2}\left(\frac{5}{2}S_{2,3}+\frac{13}{4}S_{\geq4}+(|P|-S_{2,3}-S_{\geq4})\right)}{9}=\\
  =&\frac{-\frac{1}{2}|P|+\frac{3}{2}nm-\frac{1}{4}S_{2,3}+\frac{19}{40}S_{\geq
      4}}{9},
\end{split}
\end{equation*}
which implies
\begin{equation*}
|P|\geq \frac{3}{19}nm-\frac{1}{38}S_{2,3}+\frac{1}{20}S_{\geq 4}.
\end{equation*}

%So to obtain $\frac{3}{19}$ as a lower bound we would need to prove that $\frac{5}{2}S_{\geq 4}\geq S_{2,3}$.

Consider the worst case when each of the units contains exactly two or three pebbles:
\begin{equation*}
|P|\geq \frac{3}{19}nm-\frac{1}{38}|P|,
\end{equation*}
thus
\begin{equation*}
|P|\geq \frac{2}{13}nm.
\end{equation*}
\end{proof}

\begin{corollary}
The optimal pebbling number of $P_n\square P_m$ is at least $\frac{2}{13}nm$ when $n,m\geq 5$.
\end{corollary}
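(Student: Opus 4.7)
The plan is very short: the corollary follows immediately from the preceding theorem combined with the subgraph observation that the authors already recorded just before Claim~\ref{cov_becs}. I would simply cite that observation and invoke the theorem, without redoing any of the excess/cooperation machinery.

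In a bit more detail, $P_m \square P_n$ is obtained from $T_{m,n}$ by deleting the ``wrap-around'' edges that identify opposite boundaries of the torus, so $P_m \square P_n$ is a spanning subgraph of $T_{m,n}$. Any pebbling move available in $P_m \square P_n$ uses an edge of $P_m \square P_n$, which is also an edge of $T_{m,n}$, so the same move is available in $T_{m,n}$. Consequently every solvable distribution on $P_m \square P_n$ is also a solvable distribution on $T_{m,n}$, which gives
\[
\pi_{\opt}(T_{m,n}) \le \pi_{\opt}(P_m \square P_n).
\]
Combining this with the theorem just proved then yields $\pi_{\opt}(P_n \square P_m) \ge \pi_{\opt}(T_{m,n}) \ge \tfrac{2}{13}nm$ whenever $m,n \ge 5$, which is exactly the desired bound.

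There is essentially no obstacle here: the monotonicity of the optimal pebbling number under edge addition (equivalently, the preservation of solvability when extra edges are present) is built into the definition of a pebbling move and was already stated explicitly by the authors in the paragraph introducing $T_{m,n}$. All of the nontrivial work was carried out in proving the theorem on the torus, so the corollary is a one-line formality, and the only thing to be careful about is to make sure the orientation of the inequality (subgraph has \emph{larger} or equal optimal pebbling number) is stated correctly.
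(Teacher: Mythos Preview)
Your proposal is correct and matches the paper's approach exactly: the paper does not even write out a separate proof for this corollary, since the inequality $\pi_{\opt}(T_{m,n})\leq \pi_{\opt}(P_m\square P_n)$ was already recorded in the paragraph introducing $T_{m,n}$, and the corollary follows immediately from the preceding theorem.
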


\subsection{New proof for the optimal pebbling number of the path and circle}

To illustrate the power of Lemma \ref{vertex-excess} we give a short proof of
the following well known theorem. It was first  proved  in \cite{path1}. Later, essentially different proofs were given in \cite{ladder} and \cite{path2}.

\begin{theorem}
$\pi_{\opt}(P_{3k+r})=\pi_{\opt}(C_{3k+r})=2k+r$ when $0\leq k$, $0\leq r \leq 2$ and $k$, $r$ are integers.
\end{theorem}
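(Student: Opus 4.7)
I would prove the theorem by establishing the upper and lower bounds on $\pi_{\opt}(P_{3k+r})$; the same arguments apply to $C_{3k+r}$ because $\pi_{\opt}(C_n)\le \pi_{\opt}(P_n)$ (the path is a spanning subgraph of the cycle, so every solvable distribution on $P_n$ is solvable on $C_n$), and because our lower bound method works on both graphs.

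For the upper bound I would exhibit an explicit solvable distribution of size $2k+r$. Label the vertices $1, 2, \ldots, 3k+r$ and place $2$ pebbles on each of the vertices $2, 5, 8, \ldots, 3k-1$; this covers the first $3k$ vertices using $2k$ pebbles. If $r=1$, add a single pebble on vertex $3k+1$; if $r=2$, add $2$ pebbles on vertex $3k+1$, which reaches both $3k+1$ and $3k+2$. The resulting distribution has $2k+r$ pebbles and is solvable on both $P_{3k+r}$ and $C_{3k+r}$.

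For the lower bound I would exploit the $\Delta=2$ case of Lemma \ref{vertex-excess}. Let $P$ be a solvable distribution on $C_n$ and let $\mathcal{D}=\{U_1,\ldots,U_t\}$ be a disjoint decomposition of $P$ into units. Since $\Delta(C_n)=2$, Case~2 of the proof of Lemma \ref{vertex-excess} yields
$$\coop\!\left(\sum_{k<i}U_k,\,U_i\right)\le \DC\!\left(\sum_{k<i}U_k,\,U_i\right)$$
for every $i$ with $|U_i|\ge 2$; the same inequality holds trivially when $|U_i|=1$, since a single pebble affects reachability only at its own vertex. Substituting into Claim~\ref{excess_bontas}~(\ref{eq2}) gives $n=\cov(P)\le \sum_{i=1}^{t}\cov(U_i)$.

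It then remains to show the pointwise inequality $|U|/\cov(U)\ge 2/3$ for every unit on a cycle, with equality exactly at $|U|=2$. By Claim~\ref{unit_prop} and the fact that $|N_j(u)|\le 2$ in any cycle, $\cov(U)\le 1+2\lfloor\log_2|U|\rfloor$, and an elementary check of $s/(1+2\lfloor\log_2 s\rfloor)$ for $s\ge 1$ confirms that the minimum of this ratio is $2/3$, attained at $s=2$. Combining,
$$|P|=\sum_{i=1}^{t}|U_i|\ge \tfrac{2}{3}\sum_{i=1}^{t}\cov(U_i)\ge \tfrac{2n}{3},$$
and since $|P|$ is an integer, $|P|\ge \lceil 2n/3\rceil=2k+r$. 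The only subtlety is the size-one units, which fall outside the hypothesis of Lemma \ref{vertex-excess}; these are handled directly, and the rest is a routine consequence of the structural results already established.
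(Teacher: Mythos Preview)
Your proposal is correct and follows essentially the same approach as the paper: both use the $\Delta=2$ case of Lemma~\ref{vertex-excess} to obtain $\cov(P)\le\sum_i\cov(U_i)$ from Claim~\ref{excess_bontas}, combine this with the unit bound $\cov(U)\le\tfrac{3}{2}|U|$, and then round up. Your write-up simply adds the explicit upper-bound construction and spells out the ratio check $s/(1+2\lfloor\log_2 s\rfloor)\ge 2/3$, which the paper leaves to the reader.
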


The new proof uses Lemma \ref{vertex-excess} when $\Delta=2$. (Note that the proof of this case was short and easy.)

\begin{proof}
It is easy to construct solvable distributions with the desired size, so we prove only the lower bound here.

Let $u$ be a single unit on $P_{3k+r}$ or $C_{3k+r}$. Then:
\begin{equation*}
\frac{\cov(P_u)}{|P_u|}\leq\frac{3}{2} 
\end{equation*}
 Assume that $P$ is a solvable distribution. Now by Lemma \ref{vertex-excess}

\begin{align*}
3k+r=\cov(P)\leq &\sum_{i=1}^t \left (\cov(U_i)+\coop\left (\sum_{k=1}^{i-1}U_k,U_i\right)-\DC\left (\sum_{k=1}^{i-1}U_k,U_i\right) \right)\\ 
\leq &\sum_{i=1}^t \cov(U_i)\leq  \frac{3}{2}|P|.
\end{align*}
So $2k+\frac{2r}{3}\leq |P|$. $|P|$ is integer, therefore this is equivalent to $2k+r\leq |P|$. 
\end{proof}

\section*{Open questions}

\begin{question}
Is there a constant $k$, which does not depend on $n$, such that in an optimal distribution of $P_n\square P_n$ no pebble can be moved to the distance-k neighborhood of its initial location? 
\end{question}

\begin{question}
If the answer for Question 1 is yes, then how small can be $k$?
\end{question}

We think that the answer for Question 1 is yes and we conjecture that $k$ can be 4. Any finite $k$ would improve our lower bound on $\pi_{opt}(P_n\square P_m)$. 

\section*{Acknowledgments}
The authors would like to thank Casey Tompkins for his generous reading of an early draft of this paper, and for his invaluable comments. 

The research of Ervin Gy\H{o}ri and Gyula Y. Katona is partially
supported by National Research, Development and Innovation Office
NKFIH, grant K132696. The research of Gyula Y. Katona and L\'aszl\'o F. Papp is partially
supported by National Research, Development and Innovation Office
NKFIH, grant K124171. 

The research reported in this paper has been supported by the National
Research, Development and Innovation Fund (TUDFO/51757/2019-ITM,
Thematic Excellence Program), and by the Higher Education Excellence
Program of the Ministry of Human Capacities in the frame of  Artificial
Intelligence research area of Budapest University of Technology and
Economics (BME FIKP-MI/SC).

\end{document}